\newcounter{are-there-sections}
\newcommand{\uj}[1]{{%
#1%
}}
\newcommand{\ujj}[1]{{%
}}
\newcommand{\ujjj}[1]{{%
#1%
}}
\def\me{S\'ANDOR J KOV\'ACS\xspace}
\def\mythanks{Supported in part by NSF Grant 
 DMS-0856185, and the
Craig McKibben and Sarah Merner Endowed Professorship in Mathematics at the
University of Washington.}
\def\myaddress{University of Washington, Department of Mathematics, Box 354350,
Seattle, WA 98195-4350, USA}
\def\myemail{skovacs@uw.edu\xspace}
\def\myurladdr{http://www.math.washington.edu/$\sim$kovacs\xspace}
\DeclareMathAlphabet{\smallchanc}{OT1}{pzc}%
                                 {m}{it}
\DeclareFontFamily{OT1}{pzc}{}
\DeclareFontShape{OT1}{pzc}{m}{it}%
             {<-> s * [1.100] pzcmi7t}{}
\DeclareMathAlphabet{\mathchanc}{OT1}{pzc}%
                                 {m}{it}
\newcommand{\mcH}{\mathchanc{H}}
\newcommand{\mcR}{\mathchanc{R}}
\newcommand{\mcm}{\mathchanc{m}}
\newcommand{\mco}{\mathchanc{o}}
\DeclareFontFamily{OMS}{rsfs}{\skewchar\font'60}
\DeclareFontShape{OMS}{rsfs}{m}{n}{<-5>rsfs5 <5-7>rsfs7 <7->rsfs10 }{}
\DeclareSymbolFont{rsfs}{OMS}{rsfs}{m}{n}
\DeclareSymbolFontAlphabet{\scr}{rsfs}
\newcommand{\sE}{\scr{E}}
\newcommand{\sF}{\scr{F}}
\newcommand{\sH}{\scr{H}}
\newcommand{\sI}{\scr{I}}
\newcommand{\sO}{\scr{O}}
\newcommand{\sfA}{{\sf A}}
\newcommand{\sfB}{{\sf B}}
\newcommand{\sfQ}{{\sf Q}}
\newcommand{\bC}{\mathbb{C}}
\newcommand{\bH}{\mathbb{H}}
\newcommand{\bN}{\mathbb{N}}
\newcommand{\bQ}{\mathbb{Q}}
\newcommand{\bZ}{\mathbb{Z}}
\newcommand{\ol}{\overline}
\newcommand{\ul}{\underline}
\newcommand{\into}{\hookrightarrow}
\newcommand{\wt}{\widetilde}
\newcommand{\what}{\widehat}
\newcommand{\rdown}[1]{\left\lfloor{#1}\right\rfloor}
\newcommand{\leteq}{\colon\!\!\!=}
\newcommand{\col}{\colon}
\DeclareMathOperator{\coker}{{coker}}
\DeclareMathOperator{\Ex}{Ex}
\DeclareMathOperator{\exc}{Exc}
\DeclareMathOperator{\excnklt}{Exc_{nklt}}
\newcommand{\sHom}[0]{{\mcH\mco\mcm}}
\DeclareMathOperator{\lc}{{lc}}
\DeclareMathOperator{\Ob}{{Ob}}
\DeclareMathOperator{\ob}{{Ob}}
\DeclareMathOperator{\red}{red}
\DeclareMathOperator{\Sing}{{Sing}}
\DeclareMathOperator{\supp}{{supp}}
\newcommand{\factor}[2]{\left. \raise 2pt\hbox{\ensuremath{#1}} \right/
        \hskip -2pt\raise -2pt\hbox{\ensuremath{#2}}}
\newcommand{\disjoint}{\overset{_\kdot}\cup}
\newcommand{\myR}{{\mcR\!}}
\newcommand{\tld}{\widetilde }
\newcommand{\blank}{\underline{\hskip 10pt}}
\newcommand{\kdot}{{{\,\begin{picture}(1,1)(-1,-2)\circle*{2}\end{picture}\ }}}
\newcommand{\mydot}{\kdot}
\newcommand{\cx}[1]{{#1}^{\raisebox{.15em}{\ensuremath\kdot}}}
\newcommand{\DuBois}[1]{{\underline \Omega {}^0_{#1}}}
\newcommand{\FullDuBois}[1]{{\underline \Omega {}^{\mydot}_{#1}}}
\newcommand{\Om}{\underline{\Omega}}
\def\coh#1.#2.#3.{H^{#1}(#2,#3)}
\def\dimcoh#1.#2.#3.{h^{#1}(#2,#3)}
\def\hypcoh#1.#2.#3.{\mathbb H_{\vphantom{l}}^{#1}(#2,#3)}
\def\loccoh#1.#2.#3.#4.{H^{#1}_{#2}(#3,#4)}
\def\dimloccoh#1.#2.#3.#4.{h^{#1}_{#2}(#3,#4)}
\def\lochypcoh#1.#2.#3.#4.{\mathbb H^{#1}_{#2}(#3,#4)}
\def\ses#1.#2.#3.{0  \longrightarrow  #1   \longrightarrow 
 #2 \longrightarrow #3 \longrightarrow 0} 
\def\sesshort#1.#2.#3.{0
 \rightarrow #1 \rightarrow #2 \rightarrow #3 \rightarrow 0}
\def\dist#1.#2.#3.{  #1   \longrightarrow 
 #2 \longrightarrow #3 \stackrel{+1}{\longrightarrow} } 
\def\CDdist#1.#2.#3.{  #1   @>>>  #2  @>>>   #3 @>+1>> }  
\def\shortses#1.#2.#3.{0  \rightarrow  #1   \rightarrow 
 #2  \rightarrow   #3 \rightarrow  0}
\def\shortdist#1.#2.#3.{  #1   \rightarrow 
 #2  \rightarrow   #3 \stackrel{+1}{\rightarrow} }  
\def\ddist#1.#2.#3.#4.#5.#6.{\CD
#1 @>>> #2 @>>> #3 @>+1>> \\
@VVV @VVV @VVV \\
#4 @>>> #5 @>>> #6 @>+1>> 
\endCD}
\def\ddistun#1.#2.#3.#4.#5.#6.{\CD
#1 @>>> #2 @>>> #3 @>+1>> \\
@. @VVV @VVV  \\
#4 @>>> #5 @>>> #6 @>+1>> 
\endCD}
\def\Iff#1#2#3{
\hfil\hbox{\hsize =#1
\vtop{\noin #2}
\hskip.5cm 
\lower.5\baselineskip\hbox{$\Leftrightarrow$}\hskip.5cm
\vtop{\noin #3}}\hfil\medskip}
\newcommand{\union}\cup
\newcommand{\intersect}\cap
\newcommand{\Union}\bigcup
\newcommand{\Intersect}\bigcap
\def\myoplus#1.#2.{\underset #1 \to {\overset #2 \to \oplus}}
\newcommand{\resto}{\big\vert_}
\def\qis{\,{\simeq}_{\text{qis}}\,}
\begin{document}
\makeatletter
\newenvironment{refmr}{}{}
\renewcommand{\labelenumi}{\hskip .5em(\thethm.\arabic{enumi})}
\renewcommand\thesubsection{\thesection.\Alph{subsection}}
\renewcommand\subsection{
  \renewcommand{\sfdefault}{pag}
  \@startsection{subsection}%
  {2}{0pt}{-\baselineskip}{.2\baselineskip}{\raggedright
    \sffamily\itshape\small
  }}
\renewcommand\section{
  \renewcommand{\sfdefault}{phv}
  \@startsection{section} %
  {1}{0pt}{\baselineskip}{.2\baselineskip}{\centering
    \sffamily
    \scshape
}}
\newcounter{lastyear}\setcounter{lastyear}{\the\year}
\addtocounter{lastyear}{-1}
\newcommand\sideremark[1]{%
\normalmarginpar
\marginpar
[
\hskip .45in
\begin{minipage}{.75in}
\tiny #1
\end{minipage}
]
{
\hskip -.075in
\begin{minipage}{.75in}
\tiny #1
\end{minipage}
}}
\newcommand\rsideremark[1]{
\reversemarginpar
\marginpar
[
\hskip .45in
\begin{minipage}{.75in}
\tiny #1
\end{minipage}
]
{
\hskip -.075in
\begin{minipage}{.75in}
\tiny #1
\end{minipage}
}}
\newcommand\Index[1]{{#1}\index{#1}}
\newcommand\inddef[1]{\emph{#1}\index{#1}}
\newcommand\noin{\noindent}
\newcommand\hugeskip{\bigskip\bigskip\bigskip}
\newcommand\smc{\sc}
\newcommand\dsize{\displaystyle}
\newcommand\sh{\subheading}
\newcommand\nl{\newline}
\newcommand\input /home/kovacs/tex/latex/{\input /home/kovacs/tex/latex/} 
\newcommand\Get{\Input /home/kovacs/tex/latex/} 
\newcommand\toappear{\rm (to appear)}
\newcommand\mycite[1]{[#1]}
\newcommand\myref[1]{(\ref{#1})}
\newcommand\myli{\hfill\newline\smallskip\noindent{$\bullet$}\quad}
\newcommand\vol[1]{{\bf #1}\ } 
\newcommand\yr[1]{\rm (#1)\ } 
\newcommand\cf{cf.\ \cite}
\newcommand\mycf{cf.\ \mycite}
\newcommand\te{there exist}
\newcommand\st{such that}
\newcommand\myskip{3pt}
\newtheoremstyle{bozont}{3pt}{3pt}%
     {\itshape}
     {}
     {\bfseries}
     {.}
     {.5em}
     {\thmname{#1}\thmnumber{ #2}\thmnote{ \rm #3}}
\newtheoremstyle{bozont-sf}{3pt}{3pt}%
     {\itshape}
     {}
     {\sffamily}
     {.}
     {.5em}
     {\thmname{#1}\thmnumber{ #2}\thmnote{ \rm #3}}
\newtheoremstyle{bozont-sc}{3pt}{3pt}%
     {\itshape}
     {}
     {\scshape}
     {.}
     {.5em}
     {\thmname{#1}\thmnumber{ #2}\thmnote{ \rm #3}}
\newtheoremstyle{bozont-remark}{3pt}{3pt}%
     {}
     {}
     {\scshape}
     {.}
     {.5em}
     {\thmname{#1}\thmnumber{ #2}\thmnote{ \rm #3}}
\newtheoremstyle{bozont-def}{3pt}{3pt}%
     {}
     {}
     {\bfseries}
     {.}
     {.5em}
     {\thmname{#1}\thmnumber{ #2}\thmnote{ \rm #3}}
\newtheoremstyle{bozont-reverse}{3pt}{3pt}%
     {\itshape}
     {}
     {\bfseries}
     {.}
     {.5em}
     {\thmnumber{#2.}\thmname{ #1}\thmnote{ \rm #3}}
\newtheoremstyle{bozont-reverse-sc}{3pt}{3pt}%
     {\itshape}
     {}
     {\scshape}
     {.}
     {.5em}
     {\thmnumber{#2.}\thmname{ #1}\thmnote{ \rm #3}}
\newtheoremstyle{bozont-reverse-sf}{3pt}{3pt}%
     {\itshape}
     {}
     {\sffamily}
     {.}
     {.5em}
     {\thmnumber{#2.}\thmname{ #1}\thmnote{ \rm #3}}
\newtheoremstyle{bozont-remark-reverse}{3pt}{3pt}%
     {}
     {}
     {\sc}
     {.}
     {.5em}
     {\thmnumber{#2.}\thmname{ #1}\thmnote{ \rm #3}}
\newtheoremstyle{bozont-def-reverse}{3pt}{3pt}%
     {}
     {}
     {\bfseries}
     {.}
     {.5em}
     {\thmnumber{#2.}\thmname{ #1}\thmnote{ \rm #3}}
\newtheoremstyle{bozont-def-newnum-reverse}{3pt}{3pt}%
     {}
     {}
     {\bfseries}
     {}
     {.5em}
     {\thmnumber{#2.}\thmname{ #1}\thmnote{ \rm #3}}
\theoremstyle{bozont}    
\ifnum \value{are-there-sections}=0 {%
  \newtheorem{proclaim}{Theorem}
} 
\else {%
  \newtheorem{proclaim}{Theorem}[section]
} 
\fi
\newtheorem{thm}[proclaim]{Theorem}
\newtheorem{mainthm}[proclaim]{Main Theorem}
\newtheorem{cor}[proclaim]{Corollary} 
\newtheorem{cors}[proclaim]{Corollaries} 
\newtheorem{lem}[proclaim]{Lemma} 
\newtheorem{prop}[proclaim]{Proposition} 
\newtheorem{conj}[proclaim]{Conjecture}
\newtheorem{subproclaim}[equation]{Theorem}
\newtheorem{subthm}[equation]{Theorem}
\newtheorem{subcor}[equation]{Corollary} 
\newtheorem{sublem}[equation]{Lemma} 
\newtheorem{subprop}[equation]{Proposition} 
\newtheorem{subconj}[equation]{Conjecture}
\theoremstyle{bozont-sc}
\newtheorem{proclaim-special}[proclaim]{\specialthmname}
\newenvironment{proclaimspecial}[1]
     {\def\specialthmname{#1}\begin{proclaim-special}}
     {\end{proclaim-special}}
\theoremstyle{bozont-remark}
\newtheorem{rem}[proclaim]{Remark}
\newtheorem{subrem}[equation]{Remark}
\newtheorem{notation}[proclaim]{Notation} 
\newtheorem{assume}[proclaim]{Assumptions} 
\newtheorem{obs}[proclaim]{Observation} 
\newtheorem{example}[proclaim]{Example} 
\newtheorem{examples}[proclaim]{Examples} 
\newtheorem{complem}[equation]{Complement}
\newtheorem{const}[proclaim]{Construction}   
\newtheorem{ex}[proclaim]{Exercise} 
\newtheorem{subnotation}[equation]{Notation} 
\newtheorem{subassume}[equation]{Assumptions} 
\newtheorem{subobs}[equation]{Observation} 
\newtheorem{subexample}[equation]{Example} 
\newtheorem{subex}[equation]{Exercise} 
\newtheorem{claim}[proclaim]{Claim} 
\newtheorem{inclaim}[equation]{Claim} 
\newtheorem{subclaim}[equation]{Claim} 
\newtheorem{case}{Case} 
\newtheorem{subcase}{Subcase}   
\newtheorem{step}{Step}
\newtheorem{approach}{Approach}
\newtheorem{fact}{Fact}
\newtheorem{subsay}{}
\newtheorem*{SubHeading*}{\SubHeadingName}%
\newtheorem{SubHeading}[proclaim]{\SubHeadingName}
\newtheorem{sSubHeading}[equation]{\sSubHeadingName}
\newenvironment{demo}[1] {\def\SubHeadingName{#1}\begin{SubHeading}}
  {\end{SubHeading}}%
\newenvironment{subdemo}[1]{\def\sSubHeadingName{#1}\begin{sSubHeading}}
  {\end{sSubHeading}} %
\newenvironment{demo-r}[1]{\def\SubHeadingName{#1}\begin{SubHeading-r}}
  {\end{SubHeading-r}}%
\newenvironment{subdemo-r}[1]{\def\sSubHeadingName{#1}\begin{sSubHeading-r}}
  {\end{sSubHeading-r}} %
\newenvironment{demo*}[1]{\def\SubHeadingName{#1}\begin{SubHeading*}}
  {\end{SubHeading*}}%
\newtheorem{defini}[proclaim]{Definition}
\newtheorem{question}[proclaim]{Question}
\newtheorem{subquestion}[equation]{Question}
\newtheorem{crit}[proclaim]{Criterion}
\newtheorem{pitfall}[proclaim]{Pitfall}
\newtheorem{addition}[proclaim]{Addition}
\newtheorem{principle}[proclaim]{Principle} 
\newtheorem{condition}[proclaim]{Condition}
\newtheorem{say}[proclaim]{}
\newtheorem{exmp}[proclaim]{Example}
\newtheorem{hint}[proclaim]{Hint}
\newtheorem{exrc}[proclaim]{Exercise}
\newtheorem{prob}[proclaim]{Problem}
\newtheorem{ques}[proclaim]{Question}    
\newtheorem{alg}[proclaim]{Algorithm}
\newtheorem{remk}[proclaim]{Remark}          
\newtheorem{note}[proclaim]{Note}            
\newtheorem{summ}[proclaim]{Summary}         
\newtheorem{notationk}[proclaim]{Notation}   
\newtheorem{warning}[proclaim]{Warning}  
\newtheorem{defn-thm}[proclaim]{Definition--Theorem}  
\newtheorem{convention}[proclaim]{Convention}  
\newtheorem*{ack}{Acknowledgment}
\newtheorem*{acks}{Acknowledgments}
\theoremstyle{bozont-def}    
\newtheorem{defn}[proclaim]{Definition}
\newtheorem{subdefn}[equation]{Definition}
\theoremstyle{bozont-reverse}    
\newtheorem{corr}[proclaim]{Corollary} 
\newtheorem{lemr}[proclaim]{Lemma} 
\newtheorem{propr}[proclaim]{Proposition} 
\newtheorem{conjr}[proclaim]{Conjecture}
\theoremstyle{bozont-reverse-sc}
\newtheorem{proclaimr-special}[proclaim]{\specialthmname}
\newenvironment{proclaimspecialr}[1]%
{\def\specialthmname{#1}\begin{proclaimr-special}}%
{\end{proclaimr-special}}
\theoremstyle{bozont-remark-reverse}
\newtheorem{remr}[proclaim]{Remark}
\newtheorem{subremr}[equation]{Remark}
\newtheorem{notationr}[proclaim]{Notation} 
\newtheorem{assumer}[proclaim]{Assumptions} 
\newtheorem{obsr}[proclaim]{Observation} 
\newtheorem{exampler}[proclaim]{Example} 
\newtheorem{exr}[proclaim]{Exercise} 
\newtheorem{claimr}[proclaim]{Claim} 
\newtheorem{inclaimr}[equation]{Claim} 
\newtheorem{SubHeading-r}[proclaim]{\SubHeadingName}
\newtheorem{sSubHeading-r}[equation]{\sSubHeadingName}
\newtheorem{SubHeadingr}[proclaim]{\SubHeadingName}
\newtheorem{sSubHeadingr}[equation]{\sSubHeadingName}
\newenvironment{demor}[1]{\def\SubHeadingName{#1}\begin{SubHeadingr}}{\end{SubHeadingr}}
\newtheorem{definir}[proclaim]{Definition}
\theoremstyle{bozont-def-newnum-reverse}    
\newtheorem{newnumr}[proclaim]{}
\theoremstyle{bozont-def-reverse}    
\newtheorem{defnr}[proclaim]{Definition}
\newtheorem{questionr}[proclaim]{Question}
\newtheorem{newnumspecial}[proclaim]{\specialnewnumname}
\newenvironment{newnum}[1]{\def\specialnewnumname{#1}\begin{newnumspecial}}{\end{newnumspecial}}
\numberwithin{equation}{proclaim}
\numberwithin{figure}{section} 
\newcommand\equinsect{\numberwithin{equation}{section}}
\newcommand\equinthm{\numberwithin{equation}{proclaim}}
\newcommand\figinthm{\numberwithin{figure}{proclaim}}
\newcommand\figinsect{\numberwithin{figure}{section}}
\newenvironment{sequation}{%
\numberwithin{equation}{section}%
\begin{equation}%
}{%
\end{equation}%
\numberwithin{equation}{thm}%
\addtocounter{thm}{1}%
}
\newcommand{\num}{\arabic{section}.\arabic{proclaim}}
\newenvironment{pf}{\smallskip \noindent {\sc Proof. }}{\qed\smallskip}
\newenvironment{enumerate-p}{
  \begin{enumerate}}
  {\setcounter{equation}{\value{enumi}}\end{enumerate}}
\newenvironment{enumerate-cont}{
  \begin{enumerate}
    {\setcounter{enumi}{\value{equation}}}}
  {\setcounter{equation}{\value{enumi}}
  \end{enumerate}}
\let\lenumi\labelenumi
\newcommand{\rmlabels}{\renewcommand{\labelenumi}{\rm \lenumi}}
\newcommand{\rmlabelsoff}{\renewcommand{\labelenumi}{\lenumi}}
\newenvironment{heading}{\begin{center} \sc}{\end{center}}
\newcommand\subheading[1]{\smallskip\noindent{{\bf #1.}\ }}
\newlength{\swidth}
\setlength{\swidth}{\textwidth}
\addtolength{\swidth}{-,5\parindent}
\newenvironment{narrow}{
  \medskip\noindent\hfill\begin{minipage}{\swidth}}
  {\end{minipage}\medskip}
\newcommand\nospace{\hskip-.45ex}
\makeatother

\title{DB pairs and vanishing theorems}
\author{\me}
\date{\today}
\thanks{\mythanks}
\address{\myaddress}
\email{\myemail}
\urladdr{\myurladdr}
%
\subjclass[2010]{14J17}
\maketitle
\newcommand{\szabores}{Szab\'o-resolution\xspace}
\newcommand{\pairD}{\Delta}

\centerline{\sf In memoriam Professor Masayoshi Nagata}

\begin{abstract}
  The main purpose of this article is to define the notion of \uj{Du~Bois} singularities
  for pairs and proving a vanishing theorem using this new notion. The main vanishing
  theorem specializes to a new vanishing theorem for resolutions of log canonial
  singularities.
\end{abstract}

\section{Introduction}

The class of rational singularities is one of the most important classes of
singularities.  Their essence lies in the fact that their cohomological behavior is
very similar to that of smooth points. For instance, vanishing theorems can be easily
extended to varieties with rational singularities.  Establishing that a certain class
of singularities is rational opens the door to using very powerful tools on varieties
with those singularities.

Du~Bois (or DB) singularities are probably somewhat harder to appreciate at first,
but they are equally important. Their main importance comes from two facts: They are
not too far from rational singularities, that is, they share many of their
properties, but the class of \uj{Du~Bois} singularities is more inclusive than that of
rational singularities.  For instance, log canonical singularities are \uj{Du~Bois}, but
not necessarily rational. The class of \uj{Du~Bois} singularities is also more stable
under degeneration.

Recently there has been an effort to extend the notion of rational singularities to
pairs. There are at least two approaches; Schwede and Takagi \cite{MR2492473} are
dealing with pairs $(X,\Delta)$ where $\rdown{\Delta}=0$ while Koll\'ar and Kov\'acs
\cite{KollarKovacsRP} are studying pairs $(X,\Delta)$ where $\Delta$ is reduced.

The main goal of this article is to extend the definition of \uj{Du~Bois} singularities to
pairs in the spirit of the latter approach.

Here is a brief overview of the paper.

In section~\ref{sec:rational-du-bois} some basic properties of rational and DB
singularities are reviewed, a few new ones are introduced, and the DB defect is
defined. In section~\ref{sec:pairs-gener-pairs} I recall the definition and some
basic properties of pairs, generalized pairs, and rational pairs. I define the notion
of a DB pair and the DB defect of a generalized pair and prove a few basic
properties.  In section~\ref{sec:cohom-with-comp} I recall a relevant theorem from
Deligne's Hodge theory and derive a corollary that will be needed later. In
section~\ref{sec:db-pairs} one of the main results is proven. A somewhat weaker
version is the following. See Theorem~\ref{thm:wdb-is-db} for the stronger statement.

\begin{thm}
  Rational pairs are DB pairs.
\end{thm}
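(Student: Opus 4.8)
The plan is to imitate, in the setting of pairs, the splitting argument that establishes ``rational implies Du~Bois'' in the absolute case. Fix a good log resolution $f\colon (Y,\Delta_Y)\to (X,\Delta)$ in which $\Delta_Y$ is the reduced total transform, so that $(Y,\Delta_Y)$ is an snc pair and the comparison is thrifty. The goal is to prove that the canonical map
\[
\mathcal{O}_X(-\Delta)\longrightarrow \underline{\Omega}{}^0_{X,\Delta}
\]
is a quasi-isomorphism, where $\underline{\Omega}{}^0_{X,\Delta}$ is the Du~Bois complex of the pair introduced in Section~\ref{sec:pairs-gener-pairs}; this is exactly the DB condition for $(X,\Delta)$.

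First I would assemble the commutative factorization supplied by functoriality of the Du~Bois construction. The natural transformation $\underline{\Omega}{}^0_{X,\Delta}\to Rf_*\underline{\Omega}{}^0_{Y,\Delta_Y}$, combined with the snc computation $\underline{\Omega}{}^0_{Y,\Delta_Y}\simeq \mathcal{O}_Y(-\Delta_Y)$ (the point at which Deligne's Hodge theory, through the corollary of Section~\ref{sec:cohom-with-comp}, enters), produces a diagram in which the composite
\[
\mathcal{O}_X(-\Delta)\longrightarrow \underline{\Omega}{}^0_{X,\Delta}\longrightarrow Rf_*\underline{\Omega}{}^0_{Y,\Delta_Y}\simeq Rf_*\mathcal{O}_Y(-\Delta_Y)
\]
coincides with the canonical map $\mathcal{O}_X(-\Delta)\to Rf_*\mathcal{O}_Y(-\Delta_Y)$. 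This uses that the natural map $\mathcal{O}\to\underline{\Omega}{}^0$ is compatible with $Rf_*$ and with the defining triangles of the pair complexes, and that on the snc model it is realized as the identity under the identification above.

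Next I would invoke the hypothesis. By the definition of a rational pair, the canonical map $\mathcal{O}_X(-\Delta)\to Rf_*\mathcal{O}_Y(-\Delta_Y)$ is a quasi-isomorphism. Reading this through the factorization exhibits $\mathcal{O}_X(-\Delta)\to\underline{\Omega}{}^0_{X,\Delta}$ as a split monomorphism in the derived category, a left inverse being the composite of $\underline{\Omega}{}^0_{X,\Delta}\to Rf_*\mathcal{O}_Y(-\Delta_Y)$ with the inverse of that quasi-isomorphism. It then remains to apply the pair version of Kov\'acs's splitting criterion: a map $\mathcal{O}_X(-\Delta)\to\underline{\Omega}{}^0_{X,\Delta}$ admitting a derived left inverse is automatically a quasi-isomorphism, whence $(X,\Delta)$ is a DB pair.

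The main obstacle will be the splitting criterion itself, that is, upgrading ``split injection'' to ``isomorphism''. This step is not formal: it rests on the Hodge-theoretic input, which forces the relevant hypercohomology of $\underline{\Omega}{}^0_{X,\Delta}$ to contain the corresponding cohomology of $\mathcal{O}_X(-\Delta)$ as a direct summand in a manner incompatible with a nonzero cone, so the cone must vanish. A secondary technical point, where genuine care is still needed, is the verification of the compatible functorial factorization in the pair setting, especially the identification $\underline{\Omega}{}^0_{Y,\Delta_Y}\simeq\mathcal{O}_Y(-\Delta_Y)$ on the snc model and the compatibility of all the natural maps with the defining triangles. Once these are in place the argument closes, and the stronger statement of Theorem~\ref{thm:wdb-is-db} should follow by the same mechanism applied to the weaker DB hypothesis.
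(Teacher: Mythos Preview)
Your proposal is correct and follows essentially the same route as the paper: it proves Corollary~\ref{cor:rtl-is-DB} via Corollary~\ref{corr:rtl-is-DB} by using the functorial square of \eqref{prop:rel-funtorial} together with the snc identification \eqref{cor:snc-is-db} to produce a left inverse of $\sO_X(-\Delta)\to\Om^0_{X,\Delta}$, and then invoking the splitting criterion Theorem~\ref{thm:wdb-is-db}. One small correction: the Hodge-theoretic input from Section~\ref{sec:cohom-with-comp} enters in the proof of the splitting criterion (Theorem~\ref{thm:wdb-is-db}) via \eqref{cor:surjectivity}, not in the snc identification $\Om^0_{Y,\Delta_Y}\qis\sO_Y(-\Delta_Y)$, which is a consequence of (\ref{defDB}.\ref{item:8}).
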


This generalizes \cite[Theorem~S]{Kovacs99} and \cite[5.4]{MR1741272} to pairs.  In
section \ref{sec:vanishing-theorems} I prove a rather general vanishing theorem for
DB pairs and use it to derive the following vanishing theorem for log canonical
pairs.

\begin{thm}\label{thm:main}
  Let $(X, \pairD)$ be a \uj{$\bQ$-factorial} log canonical pair, $\pi : \widetilde X
  \to X$ a log resolution of $(X,
  \pairD)$. 
  Let $\wt \pairD=\bigl(\pi^{-1}_*\rdown \pairD + \excnklt(\pi)\bigr)_{\red}$.  Then
  \begin{equation*}
    \myR^{i}\pi_* \, \sO_{\widetilde X}(- \widetilde \pairD) = 0\quad\text{for $i>0$.}\\
  \end{equation*}
\end{thm}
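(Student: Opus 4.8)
The plan is to deduce Theorem~\ref{thm:main} from the general vanishing theorem for DB pairs of Section~\ref{sec:vanishing-theorems}, the point being that the reduced divisor $\wt\pairD=\bigl(\pi^{-1}_*\rdown\pairD+\excnklt(\pi)\bigr)_{\red}$ is exactly the one through which that theorem is phrased. Since $\pi$ is a log resolution, $\wt X$ is smooth and $\wt\pairD$ is a reduced snc divisor, so $(\wt X,\wt\pairD)$ is an snc pair and hence a DB pair; the whole content of the statement is therefore that the complex $\myR\pi_*\sO_{\wt X}(-\wt\pairD)$ has no higher cohomology sheaves, i.e.\ that it is concentrated in degree $0$. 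First I would rewrite the bundle being pushed forward in adjoint form $\sO_{\wt X}(-\wt\pairD)\simeq \omega_{\wt X}\otimes\sO_{\wt X}\bigl(-(K_{\wt X}+\wt\pairD)\bigr)$ and record the discrepancy identity
\[
  K_{\wt X}+\wt\pairD \;=\; \pi^*(K_X+\pairD)-\pi^{-1}_*\{\pairD\}+\sum_{a_i>-1}a_iE_i ,
\]
valid because $(X,\pairD)$ is log canonical, so that every exceptional divisor has discrepancy $a_i\ge-1$ and the log canonical places $a_i=-1$ are precisely those collected in $\excnklt(\pi)$.

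The key steps are then the following. (i) Identify $\myR\pi_*\sO_{\wt X}(-\wt\pairD)$ with the degree-zero part $\DuBois{(X,\rdown\pairD)}$ of the Du~Bois complex of the generalized pair $(X,\rdown\pairD)$; the identity above is what shows that, for a log canonical pair, only the non-klt exceptional divisors enter this degree-zero term, while the klt exceptional divisors contribute nothing to $\myR^i\pi_*$. (ii) Show that $(X,\rdown\pairD)$ is a DB pair. As $(X,\pairD)$ is log canonical so is $(X,\rdown\pairD)$, and log canonical pairs are DB pairs, the pair analogue of the classical statement, which in this setting follows from the formalism of Section~\ref{sec:pairs-gener-pairs} together with Theorem~\ref{thm:wdb-is-db}. (iii) Invoke the definition of a DB pair: the natural map $\sO_X(-\rdown\pairD)\to\DuBois{(X,\rdown\pairD)}$ is a quasi-isomorphism. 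Combined with (i) this exhibits $\myR\pi_*\sO_{\wt X}(-\wt\pairD)$ as the sheaf $\sO_X(-\rdown\pairD)$ placed in degree $0$, forcing $\myR^i\pi_*\sO_{\wt X}(-\wt\pairD)=0$ for all $i>0$, which is the assertion.

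The main obstacle is step~(i): controlling exactly which exceptional divisors are allowed to appear. A priori the Du~Bois complex of a pair is computed from a log resolution using the \emph{total} reduced exceptional divisor, whereas the statement only permits the non-klt part $\excnklt(\pi)$. Reconciling the two amounts to absorbing the klt exceptional divisors, those with $a_i>-1$, without changing the higher direct images; concretely one writes $-(K_{\wt X}+\wt\pairD)$ as the sum of the $\pi$-trivial pullback $-\pi^*(K_X+\pairD)$ and the boundary $\pi^{-1}_*\{\pairD\}-\sum_{a_i>-1}a_iE_i$, and applies a relative Kawamata--Viehweg/Grauert--Riemenschneider type vanishing to the smooth pair $(\wt X,\wt\pairD)$. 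It is here that the $\bQ$-factoriality of $(X,\pairD)$ enters, guaranteeing that $K_X+\pairD$ and the auxiliary divisors are $\bQ$-Cartier, so that the pullbacks and the numerical bookkeeping are legitimate. Once this matching is secured, steps (ii) and (iii) are formal consequences of the DB-pair formalism developed earlier in the paper.
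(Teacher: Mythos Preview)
Your steps (ii) and (iii) are fine and line up with the paper: $(X,\rdown\pairD)$ is indeed a DB pair by Corollary~\ref{cor:lc-is-DB}, and then $\DuBois{(X,\rdown\pairD)}\qis\sO_X(-\rdown\pairD)$ by definition. The real content is entirely in step~(i), and there your proposal has a genuine gap.

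You want to identify $\myR\pi_*\sO_{\wt X}(-\wt\pairD)$ with $\DuBois{(X,\rdown\pairD)}$, and you propose to do this by writing $-\wt\pairD\sim_{\bQ} K_{\wt X}-\pi^*(K_X+\pairD)+B$ with $B=\pi^{-1}_*\{\pairD\}-\sum_{a_i>-1}a_iE_i$ and invoking relative Kawamata--Viehweg. But $B$ is not a boundary: whenever $a_i>0$ the coefficient of $E_i$ is negative. If you absorb the negative part into the ``nef'' piece, you are asking whether $-\pi^*(K_X+\pairD)-\sum_{a_i>0}a_iE_i$ is $\pi$-nef, and it is not: a curve $C$ in a fibre meeting some $E_i$ with $a_i>0$ but not contained in it gives $(-a_iE_i)\cdot C<0$. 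So the single KV/GR shot does not fire. More structurally, the vanishing theorem of Section~\ref{sec:vanishing-theorems} is phrased with $\wt\Sigma=\exc(\pi)\cup\pi^{-1}\Sigma$, i.e.\ the \emph{entire} reduced exceptional divisor; there is no mechanism in that theorem alone for discarding the klt exceptional components, and your identification in (i) is, modulo (ii)--(iii), equivalent to the theorem itself.

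The paper avoids this obstacle by changing the model rather than the bundle. It passes to a minimal dlt model $\mu:(X^{\rm m},\pairD^{\rm m})\to(X,\pairD)$, where by construction every $\mu$-exceptional divisor has discrepancy $-1$; on $X^{\rm m}$ there simply are no klt exceptional divisors to absorb, so $\rdown{\pairD^{\rm m}}$ coincides with the full $\wt\Sigma$ of Theorem~\ref{thm:vanishing} and the DB-pair vanishing applies directly to give $\myR^i\mu_*\sO_{X^{\rm m}}(-\rdown{\pairD^{\rm m}})=0$. One then climbs back up to an honest log resolution via a \szabores of the dlt pair and the rational-pair property of dlt pairs. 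Finally, the passage from this particular log resolution to an arbitrary one is handled by a separate sublemma proved via weak factorization: for a single blow-up of an snc pair along a smooth centre the statement does follow from KV when the centre is a non-klt centre, and from the \szabores description otherwise. The fact that two different arguments are needed even for a single blow-up is a signal that one uniform KV application, as in your sketch, cannot cover the general case.
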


A philosophical consequence one might draw from this theorem is that log canonical
pairs are not too far from being rational. One may even view this a vanishing theorem
similar to the one in the definition of rational singularities cf.\
\eqref{def:rtl-sing}, \eqref{def:rat-sing-pairs} with a correction term as in
vanishing theorems with multiplier ideals. Notice however, that this is in a dual
form compared to Nadel's vanishing, and hence does not follow from that, especially
since the target is not necessarily Cohen-Macaulay.

Theorem~\ref{thm:main} is also closely related to Steenbrink's characterization of
normal isolated \uj{Du~Bois} singularities \cite[3.6]{SteenbrinkMixed} (cf.\
\cite[4.13]{DuBois81}, \cite[6.1]{Kovacs-Schwede09}).

A weaker version of this theorem was the corner stone of a recent result on extending
differential forms to a log resolution \cite{GKKP10}. For details on how this theorem
may be applied, see the original article. It is possible that the current theorem
will lead to a strengthening of that result.

\begin{demo}{\bf Definitions and Notation}\label{demo:defs-and-not}
  Unless otherwise stated, all objects are assumed to be defined over $\bC$, all
  schemes are assumed to be of finite type over $\bC$ and a morphism means a morphism
  between schemes of finite type over $\bC$.

  If $\phi:Y\to Z$ is a birational morphism, then $\exc(\phi)$ will denote the
  \emph{exceptional set} of $\phi$. For a closed subscheme $W\subseteq X$, the ideal
  sheaf of $W$ is denoted by $\sI_{W\subseteq X}$ or if no confusion is likely, then
  simply by $\sI_W$.  For a point $x\in X$, $\kappa(x)$ denotes the residue field of
  $\sO_{X,x}$.
  
  For morphisms $\phi:X\to B$ and $\vartheta: T\to B$, the symbol $X_T$ will denote
  $X\times_B T$ and $\phi_T:X_T\to T$ the induced morphism.  In particular, for $b\in
  B$ we write $X_b = \phi^{-1}(b)$.
  Of course, by symmetry, we also have the notation $\vartheta_X:T_X\simeq X_T\to X$
  and if $\sF$ is an $\sO_X$-module, then $\sF_T$ will denote the $\sO_{X_T}$-module
  $\vartheta_X^*\sF$.




  Let $X$ be a complex scheme (i.e., a scheme of finite type over $\bC$) of dimension
  n. Let $D_{\rm filt}(X)$ denote the derived category of filtered complexes of
  $\sO_{X}$-modules with differentials of order $\leq 1$ and $D_{\rm filt, coh}(X)$
  the subcategory of $D_{\rm filt}(X)$ of complexes $\cx K$, such that for all $i$,
  the cohomology sheaves of $Gr^{i}_{\rm filt}K^{\kdot}$ are coherent cf.\
  \cite{DuBois81}, \cite{GNPP88}.  Let $D(X)$ and $D_{\rm coh}(X)$ denote the derived
  categories with the same definition except that the complexes are assumed to have
  the trivial filtration.  The superscripts $+, -, b$ carry the usual meaning
  (bounded below, bounded above, bounded).  Isomorphism in these categories is
  denoted by $\qis$.  A sheaf $\sF$ is also considered as a complex $\sF^\kdot$ with
  $\sF^0=\sF$ and $\sF^i=0$ for $i\neq 0$.  If $K^{\kdot}$ is a complex in any of the
  above categories, then $h^i(K^{\kdot})$ denotes the $i$-th cohomology sheaf of
  $K^{\kdot}$.

  The right derived functor of an additive functor $F$, if it exists, is denoted by
  $\myR F$ and $\myR^iF$ is short for $h^i\circ \myR F$. Furthermore, $\bH^i$,
  $\bH^i_{\rm c}$, $\bH^i_Z$ , and $\sH^i_Z$ will denote $\myR^i\Gamma$,
  $\myR^i\Gamma_{\rm c}$, $\myR^i\Gamma_Z$, and $\myR^i\sH_Z$ respectively, where
  $\Gamma$ is the functor of global sections, $\Gamma_{\rm c}$ is the functor of
  global sections with proper support, $\Gamma_Z$ is the functor of global sections
  with support in the closed subset $Z$, and $\sH_Z$ is the functor of the sheaf of
  local sections with support in the closed subset $Z$.  Note that according to this
  terminology, if $\phi\col Y\to X$ is a morphism and $\sF$ is a coherent sheaf on
  $Y$, then $\myR\phi_*\sF$ is the complex whose cohomology sheaves give rise to the
  usual higher direct images of $\sF$.

  We will often use the notion that a morphism ${f}: \sfA\to \sfB$ in a derived
  category \emph{has a left inverse}. This means that there exists a morphism
  $f^\ell: \sfB\to \sfA$ in the same derived category such that
  $f^\ell\circ{f}:\sfA\to\sfA$ is the identity morphism of $\sfA$. I.e., $f^\ell$ is
  a \emph{left inverse} of ${f}$.

  Finally, we will also make the following simplification in notation. First observe
  that if $\iota:\Sigma \into X$ is a closed embedding of schemes then $\iota_*$ is
  exact and hence $\myR\iota_*=\iota_*$. This allows one to make the following
  harmless abuse of notation: If $\sfA\in\ob D(\Sigma)$, then, as usual for sheaves,
  we will drop $\iota_*$ from the notation of the object $\iota_*\sfA$. In other
  words, we will, without further warning, consider $\sfA$ an object in $D(X)$.
\end{demo}

\begin{ack}
  I would like to thank Donu Arapura for explaining some of the intricacies of the
  relevant Hodge theory to me\uj{, Osamu Fujino for helpful remarks,} and the referee
  for useful comments.
\end{ack}

\section{Rational and {Du~Bois} singularities}\label{sec:rational-du-bois}

\begin{defn}\label{def:rtl-sing}
  Let $X$ be a normal variety and $\phi :Y \rightarrow X$ a resolution of
  singularities. $X$ is said to have \emph{rational} singularities if
  $\myR^i\phi_*\sO_Y=0$ for all $i>0$, or equivalently if the natural map $\sO_X\to
  \myR\phi_*\sO_Y$ is a quasi-isomorphism.
\end{defn}

\uj{Du~Bois} singularities are defined via Deligne's Hodge theory 
We will need a little preparation before we can define them.

The starting point is \uj{Du~Bois}'s construction, following Deligne's ideas, of the
generalized de~Rham complex, which we call the \emph{\uj{Deligne-Du~Bois} complex}.
Recall, that if $X$ is a smooth complex algebraic variety of dimension $n$, then the
sheaves of differential $p$-forms with the usual exterior differentiation give a
resolution of the constant sheaf $\bC_X$. I.e., one has a filtered complex of
sheaves,
$$
\xymatrix{%
\sO_X \ar[r]^{d} & \Omega_X^1 \ar[r]^{d} & \Omega_X^2 \ar[r]^{d} & \Omega_X^3
\ar[r]^{d} & \dots  \ar[r]^{d} & \Omega_X^n\simeq \omega_X,
}
$$
which is quasi-isomorphic to the constant sheaf $\bC_X$ via the natural map $\bC_X\to
\sO_X$ given by considering constants as holomorphic functions on $X$. Recall that
this complex \emph{is not} a complex of quasi-coherent sheaves. The sheaves in the
complex are quasi-coherent, but the maps between them are not $\sO_X$-module
morphisms. Notice however that this is actually not a shortcoming; as $\bC_X$ is not
a quasi-coherent sheaf, one cannot expect a resolution of it in the category of
quasi-coherent sheaves.

The \uj{Deligne-Du~Bois} complex is a generalization of the de~Rham complex to singular
varieties.  It is a complex of sheaves on $X$ that is quasi-isomorphic to the
constant sheaf $\bC_X$. The terms of this complex are harder to describe but its
properties, especially cohomological properties are very similar to the de~Rham
complex of smooth varieties. In fact, for a smooth variety the \uj{Deligne-Du~Bois}
complex is quasi-isomorphic to the de~Rham complex, so it is indeed a direct
generalization.

The construction of this complex, $\FullDuBois{X}$, is based on simplicial
resolutions. The reader interested in the details is referred to the original article
\cite{DuBois81}.  Note also that a simplified construction was later obtained in
\cite{Carlson85} and \cite{GNPP88} via the general theory of polyhedral and cubic
resolutions.  An easily accessible introduction can be found in \cite{Steenbrink85}.
Other useful references are the recent book \cite{PetersSteenbrinkBook} and the
survey \cite{Kovacs-Schwede09}. We will actually not use these resolutions here. They
are needed for the construction, but if one is willing to believe the listed
properties (which follow in a rather straightforward way from the construction) then
one should be able follow the material presented here.  The interested reader should
note that recently Schwede found a simpler alternative construction of (part of) the
\uj{Deligne-Du~Bois} complex that does not need a simplicial resolution \cite{MR2339829}.
For applications of the \uj{Deligne-Du~Bois} complex and \uj{Du~Bois} singularities other than
the ones listed here see \cite{SteenbrinkMixed}, \cite[Chapter 12]{Kollar95s},
\cite{Kovacs99,Kovacs00c,KSS10,KK10}.

The word ``hyperresolution'' will refer to either a simplicial, polyhedral, or cubic
resolution. Formally, the construction of $\FullDuBois{X}$ is the same regardless the
type of resolution used and no specific aspects of either types will be used.

The next theorem lists the basic properties of the \uj{Deligne-Du~Bois} complex:

\begin{thm}[{\cite{DuBois81}}]\label{defDB}
  Let $X$ be a complex scheme of finite type
  . Then there exists a functorially defined object $\Om_X^{\kdot} \in \Ob D_{\rm
    filt}(X)$ such that using the notation
  $$
  \Om_X^ p \leteq Gr^{p}_{\rm filt}\, \Om_X^\kdot [p],
  $$
  it satisfies the following properties
  \begin{enumerate-p}
  \item 
    $$
    \Om_X^\kdot \qis \bC_{X}.
    $$
    
  \item $\underline{\Omega}_{(\_)}^{\kdot}$ is functorial, i.e., if $\phi \col Y\to
    X$ is a morphism of complex schemes of finite type, then there exists a natural
    map $\phi^{*}$ of filtered complexes
    $$
    \phi^{*}\col \Om_X^\kdot \to \myR\phi_{*}\underline{\Omega}_Y^{\kdot}
    $$
    Furthermore, $\Om_X^\kdot  \in \Ob \left(D^{b}_{\rm filt, coh}(X)\right)$ and if
    $\phi$ is proper, then $\phi^{*}$ is a morphism in $D^{b}_{\rm filt, coh}(X)$.
    \label{functorial}
  
  \item Let $U \subseteq X$ be an open subscheme of $X$. Then
    $$
    \Om_X^\kdot \resto U \qis\Om^{\,\kdot}_U.
    $$
    
  \item If $X$ is proper, then there exists a spectral sequence degenerating at $E\uj{_1}$
    and abutting to the singular cohomology of $X$:
    $$
    E\uj{_1}^{pq}={\bH}^q \left(X, \Om_X^ p \right) \Rightarrow H^{p+q}(X, \bC).
    $$\label{item:Hodge}
  
  \item If\/ $\varepsilon_\kdot\col X_\kdot\to X$ is a hyperresolution, then
    $$
    \Om_X^\kdot \qis \myR{\varepsilon_\kdot}_* \Omega^\kdot_{X_\kdot}.
    $$
    In particular, $h^i\left(\Om_X^ p \right)=0$ for $i<0$.\label{item:1}
    
  \item There exists a natural map, $\sO_{X}\to \Om_X^ 0$, compatible with
    (\ref{defDB}.\ref{functorial}).
    \label{item:dR-to-DB}

  \item If\/ $X$ is a normal crossing divisor in a smooth variety, then
    $$
    \Om_X^\kdot \qis\Omega^\kdot_X.
    $$
    In particular,
    $$
    \Om_X^ p \qis\Omega^p_X.
    $$
    \label{item:8}
    
  \item If\/ $\phi\col Y\to X$ is a resolution of singularities, then
    $$
    \Om_X^{\dim X} \qis \myR\phi_*\omega_Y.
    $$
  \item\label{item:exact-triangle} Let $\pi : \tld X \rightarrow X$ be a projective
    morphism and $\Sigma \subseteq X$ a reduced closed subscheme such that $\pi$ is an
    isomorphism outside of $\Sigma$.  Let $E$ denote the reduced subscheme of $\tld
    X$ with support equal to $\pi^{-1}(X)$. 
    Then for each $p$ one has an exact triangle of objects in the derived category,
    $$
    \xymatrix{ \Om^p_X \ar[r] & \Om^p_\Sigma \oplus \myR \pi_* \Om^p_{\tld X}
      \ar[r]^-{-}
      & \myR      \pi_* \Om^p_E \ar[r]^-{+1} & .\\
    }
    $$
  \item \label{item:2} Suppose $X=Y\cup Z$ is the union of two closed subschemes and
    denote their intersection by $W\leteq Y\cap Z$.  Then for each $p$ one has an
    exact triangle of objects in the derived category,
    $$
    \xymatrix{ \Om^p_X \ar[r] & \Om^p_{Y} \oplus \Om^p_{Z}
      \ar[r]^-{-}  &  \Om^p_W \ar[r]^-{+1} & .\\
    }
    $$
  \end{enumerate-p}
\end{thm}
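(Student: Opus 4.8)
The plan is to construct $\Om_X^\kdot$ by hand from a hyperresolution and then extract the ten listed properties, most of which are formal consequences of cohomological descent, isolating the single genuinely Hodge-theoretic input. By \cite{GNPP88} there is a hyperresolution $\varepsilon_\kdot\col X_\kdot\to X$ with every $X_i$ smooth. Each $\Omega^\kdot_{X_i}$ carries its bête filtration $\sigma^{\geq p}$, and these assemble into a filtered complex $\Omega^\kdot_{X_\kdot}$ on $X_\kdot$. I would set
$$
\Om_X^\kdot \leteq \myR{\varepsilon_\kdot}_*\Omega^\kdot_{X_\kdot},
$$
filtered by the induced bête filtration. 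Since $Gr^p_\sigma\Omega^\kdot = \Omega^p[-p]$, the defining formula for $\Om_X^p$ gives $\Om_X^p\qis\myR{\varepsilon_\kdot}_*\Omega^p_{X_\kdot}$, which is precisely (\ref{defDB}.\ref{item:1}); it also yields the coherence of the graded pieces, the membership in $D^b_{\rm filt,coh}(X)$ asserted in (\ref{defDB}.\ref{functorial}), and the vanishing $h^i(\Om_X^p)=0$ for $i<0$, since each $\Omega^p_{X_i}$ is coherent and the hyperresolution has bounded length.

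The heart of making this well defined is the theory of cohomological descent. Any two hyperresolutions admit a common refinement, and descent shows the resulting objects of $D^b_{\rm filt,coh}(X)$ are canonically isomorphic, so $\Om_X^\kdot$ is independent of the choice; the same argument applied to a morphism $\phi\col Y\to X$ with compatible hyperresolutions produces the functorial pullback $\phi^*$ and its compatibilities, establishing (\ref{defDB}.\ref{functorial}). Restriction to an open $U\subseteq X$ is immediate, since a hyperresolution of $X$ restricts to one of $U$. For the quasi-isomorphism $\Om_X^\kdot\qis\bC_X$ I would again use descent: the Poincar\'e lemma gives $\Omega^\kdot_{X_i}\qis\bC_{X_i}$ on each smooth stratum, so $\Om_X^\kdot\qis\myR{\varepsilon_\kdot}_*\bC_{X_\kdot}\qis\bC_X$. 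The natural map in (\ref{defDB}.\ref{item:dR-to-DB}) is the augmentation $\sO_X\to\myR{\varepsilon_\kdot}_*\sO_{X_\kdot}$ together with the identification $\Om_X^0\qis\myR{\varepsilon_\kdot}_*\sO_{X_\kdot}$.

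The genuine content, and the step I expect to be the main obstacle, is the $E_1$-degeneration in (\ref{defDB}.\ref{item:Hodge}). Here purely formal manipulation does not suffice; one must invoke Hodge theory. When $X$ is proper, the filtered complex $(\Om_X^\kdot,\sigma)$ underlies the mixed Hodge structure on $H^\kdot(X,\bC)$ built by Deligne and Du~Bois, and the degeneration at $E_1$ is the assertion that $\sigma$ induces the Hodge filtration of that structure. Via the simplicial structure this reduces to the classical Hodge-to-de~Rham degeneration on each smooth proper $X_i$ together with the strictness of the morphisms in the simplicial diagram with respect to the filtrations; that strictness is exactly what the mixed Hodge theory supplies and is what cannot be obtained by descent alone. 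The abutment to $H^{p+q}(X,\bC)$ is then property~1.

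The remaining properties are local or descent computations. For a normal crossing divisor one takes the hyperresolution furnished by the disjoint unions of intersections of the components; the associated complex computes $\Omega^p_X$ on the nose, giving $\Om_X^p\qis\Omega^p_X$ as in (\ref{defDB}.\ref{item:8}). For a resolution $\phi\col Y\to X$ the top graded piece $\Om_X^{\dim X}$ is computed by the single smooth model $Y$, whence $\Om_X^{\dim X}\qis\myR\phi_*\omega_Y$. Finally, the two exact triangles come from choosing hyperresolutions compatible with the given decompositions: for $X=Y\cup Z$ with $W\leteq Y\cap Z$ one takes a hyperresolution of $X$ restricting to hyperresolutions of $Y$, $Z$, and $W$, and the Mayer--Vietoris triangle of the corresponding pushforwards yields (\ref{defDB}.\ref{item:2}); the projective-morphism triangle (\ref{defDB}.\ref{item:exact-triangle}) is obtained in the same way from a hyperresolution of the square relating $\tld X$, $\Sigma$, and $E$. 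Each of these is formal once the descent machinery and the compatibility of hyperresolutions with closed decompositions are in hand, so the whole argument follows the lines of \cite{DuBois81} and \cite{GNPP88}.
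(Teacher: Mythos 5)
Your proposal is correct and follows essentially the same route as the paper, which does not prove Theorem~\ref{defDB} at all but quotes it from \cite{DuBois81} and \cite{GNPP88}: your construction of $\Om_X^\kdot$ as $\myR{\varepsilon_\kdot}_*\Omega^\kdot_{X_\kdot}$ with the b\^ete filtration, made well defined and functorial by cohomological descent, with Deligne's $E_1$-degeneration isolated as the one genuinely Hodge-theoretic input, is precisely the argument of those cited sources. The only places you assert rather than verify (the local \v{C}ech-type exactness of $\Omega^p_X\to\bigoplus\Omega^p_{X_i}\to\cdots$ in the snc case, and the dimension bounds on hyperresolutions used for the top graded piece) are standard lemmas from the same references, so there is no gap at the level of detail appropriate to a sketch.
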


It turns out that the \uj{Deligne-Du~Bois} complex behaves very much like the de~Rham
complex for smooth varieties. Observe that (\ref{defDB}.\ref{item:Hodge}) says that
the Hodge-to-de~Rham (a.k.a.\ Fr\"olicher) spectral sequence works for singular
varieties if one uses the \uj{Deligne-Du~Bois} complex in place of the de~Rham complex.
This has far reaching consequences and if the associated graded pieces $\Om_X^ p$
turn out to be computable, then this single property leads to many applications.

Notice that (\ref{defDB}.\ref{item:dR-to-DB}) gives a natural map $\sO_{X}\to
\Om^0_X$, and we will be interested in situations when this map is a
quasi-isomorphism.  When $X$ is proper over $\bC$, such a quasi-isomorphism implies
that the natural map
\begin{equation*}
  H^i(X^{\rm an}, \bC) \rightarrow H^i(X, \sO_{X}) = \bH^i(X, \DuBois{X})
\end{equation*}
is surjective because of the degeneration at $E_1$ of the spectral sequence in
(\ref{defDB}.\ref{item:Hodge}). Notice that this is the condition that is crucial for
Kodaira-type vanishing theorems cf.\ \cite[\S 9]{Kollar95s}.

Following \uj{Du~Bois}, Steenbrink was the first to study this condition and he christened
this property after \uj{Du~Bois}. It should be noted that many of the ideas that play
important roles in this theory originated from Deligne. Unfortunately the now
standard terminology does not reflect this.

\begin{defn}\label{def:db-sing}
  A scheme $X$ is said to have \emph{\uj{Du~Bois}} singularities (or \emph{DB}
  singularities for short) if the natural map $\sO_{X}\to \Om^0_X$ from
  (\ref{defDB}.\ref{item:dR-to-DB}) is a quasi-isomorphism.
\end{defn}

\begin{rem}
  If $\varepsilon : X_{\kdot} \rightarrow X$ is a hyperresolution of $X$ then $X$ has
  DB singularities if and only if the natural map $\sO_X \rightarrow \myR
  {\varepsilon_{\kdot}}_* \sO_{X_{\kdot}}$ is a quasi-isomorphism.
\end{rem}

\begin{example}
  It is easy to see that smooth points are DB and Deligne proved that normal crossing
  singularities are DB as well cf.\ (\ref{defDB}.\ref{item:8}), \cite[Lemme
  2(b)]{MR0376678}.
\end{example}

In applications it is very useful to be able to take general hyperplane sections. The
next statement helps with that.

\begin{prop}\label{prop:db-cx-of-hyper}
  Let $X$ be a quasi-projective variety and $H\subset X$ a general member of a very
  ample linear system. Then $\Om_H^\kdot\qis \Om_X^\kdot\otimes_L\sO_H$. 
\end{prop}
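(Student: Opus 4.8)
The plan is to pass to a hyperresolution, use it to compute both sides, and reduce the statement to the case of a smooth divisor in a smooth variety, where it becomes a local computation. The point of taking $H$ \emph{general} will be exactly that it makes restriction commute with the relevant derived pushforward.

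First I would fix a hyperresolution $\varepsilon_\kdot\col X_\kdot\to X$, so that by (\ref{defDB}.\ref{item:1}) one has $\Om_X^\kdot\qis \myR{\varepsilon_\kdot}_*\Omega^\kdot_{X_\kdot}$ with every $X_i$ smooth. Writing $i\col H\into X$ for the inclusion, the statement to prove is that the natural map $\mathbf{L}i^*\Om_X^\kdot\to \Om_H^\kdot$ coming from (\ref{defDB}.\ref{functorial}) by adjunction is a quasi-isomorphism, where $\mathbf{L}i^*$ is the derived restriction $(-)\otimes_L\sO_H$. Since a very ample, hence globally generated, sheaf pulls back along each $\varepsilon_i$ to a globally generated one, Bertini in characteristic zero shows that for general $H$ each preimage $H_i\leteq \varepsilon_i^{-1}(H)$ is a smooth divisor in $X_i$; as a hyperresolution has only finitely many nonzero terms, one general $H$ works for all $i$ at once. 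I would then record the key geometric fact that the restricted augmentation $\varepsilon'_\kdot\col H_\kdot\leteq\varepsilon_\kdot^{-1}(H)\to H$ is itself a hyperresolution of $H$, so that (\ref{defDB}.\ref{item:1}) applied on $H$ gives $\Om_H^\kdot\qis \myR{\varepsilon'_\kdot}_*\Omega^\kdot_{H_\kdot}$.

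With these in hand the argument is a short chain. On the smooth pieces the Deligne--Du~Bois complex is the ordinary de~Rham complex (\ref{defDB}.\ref{item:8}), and I would invoke the local comparison $\Omega^\kdot_{H_i}\qis \Omega^\kdot_{X_i}\otimes_L\sO_{H_i}$ for a smooth divisor in a smooth variety, checked directly from the Koszul resolution of $\sO_{H_i}$; this is the one genuinely local input, and it is here that the conormal twist by $\sO(-H)$ is absorbed into the total complex. Because $H$ is general it is Tor-independent from $X_\kdot$ over $X$, so the derived base-change map $\mathbf{L}i^*\myR{\varepsilon_\kdot}_*\to \myR{\varepsilon'_\kdot}_*\mathbf{L}i_\kdot^*$ is an isomorphism, and chaining the identifications gives $\Om_X^\kdot\otimes_L\sO_H\qis \myR{\varepsilon'_\kdot}_*\bigl(\Omega^\kdot_{X_\kdot}\otimes_L\sO_{H_\kdot}\bigr)\qis \myR{\varepsilon'_\kdot}_*\Omega^\kdot_{H_\kdot}\qis \Om_H^\kdot$.

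The hard part will be the two genericity-driven steps: verifying that $\varepsilon'_\kdot\to H$ satisfies cohomological descent (so that it indeed computes $\Om_H^\kdot$) and that $H$ is Tor-independent from $X_\kdot$ for base change. Both come from the same principle, that a general $H$ meets every stratum of the resolution data properly, so I would isolate the genericity argument once---choosing $H$ away from the finitely many discriminantal loci of the $\varepsilon_i$ and of their mutual intersections---and feed it into both places. Everything else is formal manipulation in the derived category together with the listed properties of $\Om_X^\kdot$.
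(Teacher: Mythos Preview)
Your approach is essentially the same as the paper's: choose a hyperresolution $\varepsilon_\kdot\col X_\kdot\to X$, observe that for general $H$ the fiber product $X_\kdot\times_X H\to H$ is again a hyperresolution, and apply (\ref{defDB}.\ref{item:1}) to both $X$ and $H$. The paper states this in three sentences and leaves implicit exactly the steps you have unpacked---Bertini for the smoothness of the $H_i$, Tor-independence and derived base change for the comparison $\mathbf{L}i^*\myR{\varepsilon_\kdot}_*\qis \myR{\varepsilon'_\kdot}_*\mathbf{L}i_\kdot^*$, and the local identification on smooth pieces.
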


\begin{proof}
  Let $\varepsilon_\kdot: X_\kdot\to X$ be a hyperresolution. Since $H$ is general,
  the fiber product $X_\kdot\times_XH\to H$ provides a hyperresolution of $H$. Then
  the statement follows from (\ref{defDB}.\ref{item:1}) applied to both $X$ and $H$.
\end{proof}

We saw in (\ref{defDB}.\ref{item:1}) that $h^i\left(\Om_X^0 \right)=0$ for $i<0$.  In
fact, there is a corresponding upper bound by \cite[III.1.17]{GNPP88}, namely that
$h^i\left(\Om_X^0 \right)=0$ for $i>\dim X$. It turns out that one can make a
slightly better estimate.

\begin{prop}[\protect{cf.\ \cite[13.7]{GKKP10}, \cite[4.9]{KSS10}}]
  \label{prop:top-coh-vanishes}
  Let $X$ be a positive dimensional variety (i.e., reduced). Then the $i^{\text{th}}$
  cohomology sheaf of $\ul{\Omega}_X^p$ vanishes for all $i\geq \dim X$, i.e.,
  $h^i(\ul{\Omega}_X^p)=0$ for all $p$ and for all $i\geq \dim X$.
\end{prop}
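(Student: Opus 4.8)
The plan is to argue by induction on $n=\dim X$. Since the formation of $\ul{\Omega}_X^p$ commutes with restriction to open subschemes and with passage to a projective compactification, I may assume $X$ is projective and, treating connected components separately, irreducible. By \cite[III.1.17]{GNPP88} we already know $h^i(\ul{\Omega}_X^p)=0$ for $i>n$, so only the borderline degree $i=n$ remains; set $\mcF\leteq h^n(\ul{\Omega}_X^p)$. On the smooth locus $\ul{\Omega}_X^p$ is a locally free sheaf placed in degree $0$, so $\supp\mcF\subseteq\Sing X$ and in particular $\dim\supp\mcF\leq n-1$.

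The base case $n=1$ is handled by the normalization $\nu\col\tilde X\to X$. Taking $\Sigma\leteq\Sing X$ and $E\leteq\nu^{-1}(\Sigma)_{\red}$ — both finite — the morphism $\nu$ is finite, so $\myR\nu_*=\nu_*$ is exact and the exact triangle (\ref{defDB}.\ref{item:exact-triangle}) is concentrated in degree $0$; its long exact sequence gives $h^1(\ul{\Omega}_X^p)=0$ at once.

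For the inductive step I assume the statement in dimension $n-1$ and choose $H\subset X$ a general member of a very ample linear system; then $H$ is a reduced variety of dimension $n-1$. Tensoring $\ul{\Omega}_X^p$ (derived) with $\ses{\sO_X(-H)}.{\sO_X}.{\sO_H}.$ yields an exact triangle
\[
  \ul{\Omega}_X^p\otimes\sO_X(-H)\longrightarrow\ul{\Omega}_X^p\longrightarrow\ul{\Omega}_X^p\otimes_L\sO_H\xrightarrow{\ +1\ },
\]
whose third term is $\ul{\Omega}_H^p$ by Proposition~\ref{prop:db-cx-of-hyper}. As $\sO_X(-H)$ is a line bundle, $\otimes\sO_X(-H)$ is exact on cohomology sheaves; combining the resulting long exact sequence with the inductive hypothesis $h^i(\ul{\Omega}_H^p)=0$ for $i\geq n-1$ produces an isomorphism $\mcF\otimes\sO_X(-H)\xrightarrow{\ \sim\ }\mcF$, induced by the section cutting out $H$. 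Passing to cokernels gives $\mcF\otimes\sO_H=0$ for general $H$; since a general $H$ meets every positive-dimensional subvariety, $\supp\mcF$ must be finite.

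The remaining — and, I expect, genuinely delicate — point is to rule out finite support. Localizing, assume $\supp\mcF=\{z\}$ with $z$ isolated in $\Sing X$. Apply (\ref{defDB}.\ref{item:exact-triangle}) to a resolution $\pi\col\tilde X\to X$ with $\Sigma=\{z\}$ and $E=\pi^{-1}(z)_{\red}$. The fibres of $\pi$ have dimension $\leq n-1$, so $\myR^n\pi_*\Omega_{\tilde X}^p=0$, and $h^n(\ul{\Omega}_z^p)=0$ because $z$ is a point; the long exact sequence therefore presents $\mcF_z$ as a quotient, via the connecting map, of $\bH^{n-1}(E,\ul{\Omega}_E^p)$. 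It then suffices to show that the restriction $\myR^{n-1}\pi_*\Omega_{\tilde X}^p\to\bH^{n-1}(E,\ul{\Omega}_E^p)$ is surjective, for then the connecting map vanishes and $\mcF_z=0$. This surjectivity — that every class on the exceptional fibre in the top relevant degree comes from $\tilde X$ — is exactly where Hodge theory is needed, and I would derive it from the $E_1$-degeneration of (\ref{defDB}.\ref{item:Hodge}) applied to $E$ and to $\tilde X$. An equivalent route would be to establish a version of Proposition~\ref{prop:db-cx-of-hyper} for a general hyperplane through $z$: multiplication by its local equation would then be surjective on the finite-length module $\mcF_z$ while lying in $\frm_z$, so $\mcF_z=0$ by Nakayama. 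Either way, controlling the Deligne–Du~Bois complex at the isolated point $z$ (equivalently, under a non-transverse section) is the heart of the matter; the rest is bookkeeping with the triangles and long exact sequences above.
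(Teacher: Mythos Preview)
Your hyperplane-section reduction to finite support is correct, but the proposal breaks down after that, and it overlooks two simplifications that make the argument much shorter. First, the bound from \cite[III.1.17]{GNPP88} is $h^i(\ul\Omega_X^p)=0$ whenever $i+p>\dim X$, not just $i>\dim X$; so for $p>0$ the case $i=n$ is already done, and only $(p,i)=(0,n)$ remains. Second, your jump to ``$z$ isolated in $\Sing X$'' is unjustified: finite support of $\mcF$ says nothing about the dimension of $\Sing X$ at $z$, and without that you cannot invoke (\ref{defDB}.\ref{item:exact-triangle}) with $\Sigma=\{z\}$, since a resolution is not an isomorphism outside $\{z\}$ when $\Sing X$ is positive-dimensional there.

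More importantly, the surjectivity you flag as the heart of the matter and ``exactly where Hodge theory is needed'' is, for $p=0$, elementary. Take $\Sigma=\Sing X$ (not $\{z\}$) and a log resolution $\pi\colon\wt X\to X$ with snc exceptional divisor $E$. Then $\ul\Omega_{\wt X}^0\qis\sO_{\wt X}$ and $\ul\Omega_E^0\qis\sO_E$, and the restriction $\myR^{n-1}\pi_*\sO_{\wt X}\to\myR^{n-1}\pi_*\sO_E$ is surjective because its cokernel sits inside $\myR^n\pi_*\sO_{\wt X}(-E)$, which vanishes by the fibre-dimension bound. Together with $h^n(\ul\Omega_\Sigma^0)=0$ (from \cite[III.1.17]{GNPP88}, as $\dim\Sigma<n$) and $\myR^n\pi_*\sO_{\wt X}=0$, the long exact sequence of (\ref{defDB}.\ref{item:exact-triangle}) gives $h^n(\ul\Omega_X^0)=0$ directly---no induction, no hyperplanes, no $E_1$-degeneration. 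This is the argument behind the references the paper cites.
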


\begin{proof} 
  For $i>\dim X$ or $p>0$, the statement follows from \cite[III.1.17]{GNPP88}.  The
  case $p=0$ and $i=n:=\dim X$ follows from either \cite[13.7]{GKKP10} or
  \cite[4.9]{KSS10}.
\end{proof}

\noindent
Another, much simpler fact that will be used later is the following:
\begin{cor}
  If $\dim X=1$, then $h^i(\Om_X^p)=0$ for $i\neq 0$. In particular $X$ is DB if and
  only if it is semi-normal.
\end{cor}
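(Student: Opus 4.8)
The plan is to treat the two assertions in turn, obtaining the cohomological vanishing almost immediately and then reading the Du~Bois criterion off from it. For the vanishing $h^i(\Om_X^p)=0$ with $i\neq 0$, observe that since $\dim X=1$ only two ranges occur: for $i<0$ this is the last sentence of (\ref{defDB}.\ref{item:1}), while for $i\geq 1=\dim X$ it is precisely Proposition~\ref{prop:top-coh-vanishes} applied to the reduced curve $X$. These ranges exhaust every $i\neq 0$, so nothing further is needed. In particular, taking $p=0$, the object $\Om_X^0$ has cohomology concentrated in degree $0$ and is therefore quasi-isomorphic to the single sheaf $h^0(\Om_X^0)$.

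This last observation reduces the Du~Bois condition to a statement about sheaves. By Definition~\ref{def:db-sing}, $X$ is DB exactly when the natural map $\sO_X\to\Om_X^0$ is a quasi-isomorphism; since both source and target have cohomology only in degree $0$, this holds if and only if the induced map $\sO_X\to h^0(\Om_X^0)$ is an isomorphism of sheaves. Thus the second assertion comes down to identifying $h^0(\Om_X^0)$ with the structure sheaf $\sO_{X^{\mathrm{sn}}}$ of the seminormalization of $X$: granting this, $\sO_X\to h^0(\Om_X^0)=\sO_{X^{\mathrm{sn}}}$ is the canonical comparison map, an isomorphism precisely when $X=X^{\mathrm{sn}}$, i.e.\ when $X$ is seminormal.

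To carry out the identification I would apply the exact triangle (\ref{defDB}.\ref{item:exact-triangle}) to the normalization $\nu:\tld X\to X$, with $\Sigma=\Sing X$ and $E=\nu^{-1}(\Sigma)$. Here $\tld X$ is smooth and $\Sigma,E$ are reduced and $0$-dimensional, so all three are DB by (\ref{defDB}.\ref{item:8}), giving $\Om^0_{\tld X}\qis\sO_{\tld X}$, $\Om^0_\Sigma\qis\sO_\Sigma$, and $\Om^0_E\qis\sO_E$; moreover $\nu$ is finite, so $\myR\nu_*=\nu_*$. Passing to cohomology sheaves, the triangle collapses to a short exact sequence $0\to h^0(\Om_X^0)\to\sO_\Sigma\oplus\nu_*\sO_{\tld X}\to\nu_*\sO_E\to 0$, its right-exactness reconfirming $h^1(\Om_X^0)=0$, and its middle map being the difference of the two restrictions to $E$. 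Projecting to the second summand then identifies the kernel with the subsheaf of $\nu_*\sO_{\tld X}$ of functions taking a common value along each fiber $\nu^{-1}(x)$, which is exactly $\sO_{X^{\mathrm{sn}}}$. The one delicate point, and the step I expect to require the most care, is this final matching: one must check that ``constant along the fibers over $\Sigma$'' is the correct local description of the seminormalization, which over $\bC$ reduces to agreement of the values in the residue field $\bC$ at the preimage points. Alternatively one may bypass the triangle computation and invoke Saito's identification $h^0(\Om_X^0)\qis\sO_{X^{\mathrm{sn}}}$ directly.
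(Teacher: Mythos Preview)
Your proposal is correct and aligns with the paper's proof. For the vanishing you invoke exactly the two inputs the paper uses (the $i<0$ case from (\ref{defDB}.\ref{item:1}) and Proposition~\ref{prop:top-coh-vanishes} for $i\geq\dim X$), and for the Du~Bois criterion you reduce, as the paper does, to the identification $h^0(\Om_X^0)\simeq\sO_{X^{\mathrm{sn}}}$.

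The only difference is in how that identification is obtained. The paper simply \emph{cites} it as a known fact (Saito \cite[5.2]{MR1741272}, with further references to Schwede), which is precisely your ``alternative'' route. Your primary route---computing $h^0(\Om_X^0)$ by hand via the exact triangle (\ref{defDB}.\ref{item:exact-triangle}) applied to the normalization---is a genuinely self-contained argument not present in the paper, and it works: the surjectivity of $\nu_*\sO_{\tld X}\to\nu_*\sO_E$ gives the short exact sequence you write, and over $\bC$ the subsheaf of $\nu_*\sO_{\tld X}$ of functions constant along the fibers of $\nu$ over $\Sigma$ is the classical description of the seminormalization. This buys you independence from the cited literature at the cost of the ``delicate point'' you already flagged, namely checking that gluing by common residue-field values is the right local model for seminormalization (true here because the residue fields are all $\bC$).
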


\begin{proof}
  The first statement is a direct consequence of \eqref{prop:top-coh-vanishes}.  For
  the last statement recall that the seminormalization of $\sO_X$ is exactly
  $h^0(\Om^0_X)$, and so $X$ is seminormal if and only if $\sO_X\simeq h^0(\Om^0_X)$
  \cite[5.2]{MR1741272} (cf.\ \cite[5.4.17]{Schwede06}, \cite[4.8]{MR2339829}, and
  \cite[5.6]{MR2503989}).  
\end{proof}

\begin{defn}\label{def:db-defect}
  The \emph{DB defect} of $X$ is the mapping cone of the morphism $\sO_X\to \Om^0_X$.
  It is denoted by $\Om_X^\times$. As a simple consequence of the definition, one has
  an exact triangle,
  $$
  \xymatrix{%
    \sO_X \ar[r] & \Om^0_X \ar[r] & \Om^\times_X \ar[r]^-{+1} & .
  }
  $$
  Notice that $h^0(\Om_X^\times)\simeq h^0(\Om_X^0)/\sO_X$ and
  $h^i(\Om_X^\times)\simeq h^i(\Om_X^0)$ for $i>0$.
\end{defn}

\begin{prop}\label{prop:hyper-plane-cuts}
  Let $X$ be a quasi-projective variety and $H\subset X$ a general member of a very
  ample linear system. Then $\Om_H^\times\qis \Om_X^\times\otimes_L\sO_H$. 
\end{prop}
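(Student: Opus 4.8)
The plan is to derived-restrict to $H$ the exact triangle that defines the DB defect. By Definition~\ref{def:db-defect}, $\Om_X^\times$ is the mapping cone of the natural map $\sO_X\to\Om^0_X$, so it sits in the exact triangle
$$
\xymatrix{
  \sO_X \ar[r] & \Om^0_X \ar[r] & \Om_X^\times \ar[r]^-{+1} & .
}
$$
Since $H$ is a member of a very ample linear system it is an effective Cartier divisor, so $(\blank)\otimes_L\sO_H=(\blank)\otimes^L_{\sO_X}\sO_H$ is a well-defined exact (triangulated) functor of finite Tor-dimension. Applying it to the triangle above produces an exact triangle
$$
\xymatrix{
  \sO_X\otimes_L\sO_H \ar[r] & \Om^0_X\otimes_L\sO_H \ar[r] & \Om_X^\times\otimes_L\sO_H \ar[r]^-{+1} & .
}
$$
It therefore suffices to identify the first two terms, compatibly with the horizontal maps, with $\sO_H$ and $\Om^0_H$; the third term will then be forced to be the cone of the natural map $\sO_H\to\Om^0_H$, that is, $\Om_H^\times$.

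First I would dispose of the two outer identifications. The leftmost is immediate: $\sO_X\otimes_L\sO_H\qis\sO_H$, as $\sO_X$ is the structure sheaf. For the middle term I would run the argument of Proposition~\ref{prop:db-cx-of-hyper} at the level of the $p=0$ graded piece. Fixing a hyperresolution $\varepsilon_\kdot\col X_\kdot\to X$, the base change $\varepsilon'_\kdot\col X_\kdot\times_X H\to H$ is again a hyperresolution, precisely because $H$ is general; then (\ref{defDB}.\ref{item:1}) gives both $\Om^0_X\qis\myR{\varepsilon_\kdot}_*\sO_{X_\kdot}$ and $\Om^0_H\qis\myR{\varepsilon'_\kdot}_*\sO_{X_\kdot\times_X H}$, and the projection formula together with $\sO_{X_\kdot\times_X H}\qis\sO_{X_\kdot}\otimes_L\varepsilon_\kdot^*\sO_H$ (valid since a general $H$ pulls back to smooth divisors on the smooth components, so derived and naive restriction agree) yields $\Om^0_X\otimes_L\sO_H\qis\Om^0_H$.

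The only delicate point, which I expect to be the main obstacle, is to check that under these two identifications the induced map $\sO_X\otimes_L\sO_H\to\Om^0_X\otimes_L\sO_H$ is carried to the natural map $\sO_H\to\Om^0_H$ of (\ref{defDB}.\ref{item:dR-to-DB}). This amounts to the compatibility of the natural transformation $\sO_{(\blank)}\to\Om^0_{(\blank)}$ with restriction to the general hyperplane, which is governed by the functoriality in (\ref{defDB}.\ref{functorial}) and (\ref{defDB}.\ref{item:dR-to-DB}) and holds on the nose thanks to the generality of $H$. Granting this, the two outer isomorphisms form a commuting square with the horizontal natural maps, so by the axioms of triangulated categories they extend to a morphism of exact triangles
$$
\xymatrix{
  \sO_X\otimes_L\sO_H \ar[r]\ar[d]^-{\wr} & \Om^0_X\otimes_L\sO_H \ar[r]\ar[d]^-{\wr} & \Om_X^\times\otimes_L\sO_H \ar[r]^-{+1}\ar[d] & {} \\
  \sO_H \ar[r] & \Om^0_H \ar[r] & \Om_H^\times \ar[r]^-{+1} & {}
}
$$
and, the two left-hand verticals being isomorphisms, the five lemma for triangulated categories forces the third vertical arrow to be an isomorphism as well. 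This gives $\Om_X^\times\otimes_L\sO_H\qis\Om_H^\times$, as desired. The cone transport is formal once the square is known to commute; essentially all the content lies in that commutativity and in the use of the generality of $H$ to ensure that the derived restriction of $\Om^0_X$ computes $\Om^0_H$.
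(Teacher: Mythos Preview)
Your proposal is correct and follows essentially the same route as the paper: the paper's proof is the one-line ``This follows easily from the definition and \ref{prop:db-cx-of-hyper}'', and what you have written is precisely the unpacking of that sentence---tensor the defining triangle of $\Om_X^\times$ by $\sO_H$, identify the first two terms via $\sO_X\otimes_L\sO_H\qis\sO_H$ and Proposition~\ref{prop:db-cx-of-hyper}, and conclude by comparing cones. Your attention to the compatibility of the map $\sO_X\to\Om_X^0$ with restriction to $H$ is the only point that actually needs a word, and you handle it correctly via (\ref{defDB}.\ref{functorial}) and (\ref{defDB}.\ref{item:dR-to-DB}).
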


\begin{proof}
  This follows easily from the definition and \ref{prop:db-cx-of-hyper}.
\end{proof}

The next simple observation explains the name of the DB defect.

\begin{lem}\label{lem:db-defect}
  A variety $X$ is DB if and only if the DB defect of $X$ is acyclic, that is,
  $\Om_X^\times\qis 0$.
\end{lem}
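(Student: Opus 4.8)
The plan is to read off the statement directly from the construction of the DB defect as a mapping cone, invoking the standard fact that a morphism in a derived category is a quasi-isomorphism if and only if its cone is acyclic.

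First I would recall the two relevant definitions. By Definition~\ref{def:db-sing}, $X$ is DB exactly when the natural map $\sO_X\to\Om^0_X$ of (\ref{defDB}.\ref{item:dR-to-DB}) is a quasi-isomorphism. By Definition~\ref{def:db-defect}, the DB defect $\Om_X^\times$ is by construction the mapping cone of this same map, so that $\sO_X$, $\Om^0_X$, and $\Om_X^\times$ form the exact triangle displayed there.

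Next I would pass to the long exact sequence of cohomology sheaves induced by that exact triangle. Because $\sO_X$ is concentrated in degree zero, this sequence shows that $h^i(\Om_X^\times)=0$ for every $i$ precisely when the natural map induces isomorphisms $h^i(\sO_X)\iso h^i(\Om^0_X)$ for every $i$, that is, precisely when $\sO_X\to\Om^0_X$ is a quasi-isomorphism. Combining this with the two definitions yields the asserted equivalence: $X$ is DB if and only if $\Om_X^\times\qis 0$.

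There is no genuine obstacle here, as the content is purely formal; it is simply the characterization of quasi-isomorphisms in terms of the acyclicity of their cones, specialized to the map defining DB singularities. The only point requiring any care is to observe that ``$\Om_X^\times$ is acyclic'' means that all of its cohomology sheaves vanish, which is exactly the condition that the long exact sequence translates into the quasi-isomorphism statement.
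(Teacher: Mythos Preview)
Your proposal is correct and is exactly what the paper does: the paper's proof is the single line ``This follows directly from the definition,'' and your argument simply unpacks that line via the mapping-cone exact triangle and its long exact cohomology sequence. There is nothing to add.
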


\begin{proof}
  This follows directly from the definition.
\end{proof}

\begin{prop}\label{prop:DB-defect-for-unions}
  Let $X=Y\cup Z$ be a union of closed subschemes with intersection $W=Y\cap Z$. Then
  one has an exact triangle of the DB defects of $X,Y,Z$, and $W$:
  \begin{equation*}
    \xymatrix{ \Om^\times_X \ar[r] & \Om^\times_{Y} \oplus \Om^\times_{Z}
     \ar[r]^-{-}  &  \Om^\times_W \ar[r]^-{+1} & .\\
    }
  \end{equation*}
\end{prop}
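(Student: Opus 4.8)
The plan is to derive the exact triangle of DB defects directly from the Mayer–Vietoris-type exact triangle for the Deligne–Du~Bois complexes in part~\eqref{item:2} of Theorem~\ref{defDB}, combined with the structural map $\sO_{(\_)}\to\Om^0_{(\_)}$ that defines the DB defect. The key observation is that the DB defect $\Om^\times_{(\_)}$ is the mapping cone of $\sO_{(\_)}\to\Om^0_{(\_)}$, and mapping cones are functorial enough that a morphism of exact triangles induces an exact triangle of the cones (the ``octahedral'' or $3\times 3$ diagram mechanism in a triangulated category).

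First I would write down the relevant instance of \eqref{item:2} with $p=0$, giving the exact triangle
\begin{equation*}
  \xymatrix{ \Om^0_X \ar[r] & \Om^0_{Y}\oplus\Om^0_{Z} \ar[r] & \Om^0_W \ar[r]^-{+1} & .}
\end{equation*}
Next I would record the analogous, much simpler, Mayer–Vietoris triangle for the structure sheaves themselves,
\begin{equation*}
  \xymatrix{ \sO_X \ar[r] & \sO_{Y}\oplus\sO_{Z} \ar[r] & \sO_W \ar[r]^-{+1} & ,}
\end{equation*}
which comes from the short exact sequence $0\to\sO_X\to\sO_Y\oplus\sO_Z\to\sO_W\to 0$ attached to the decomposition $X=Y\cup Z$ with $W=Y\cap Z$. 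The point is that the natural maps $\sO_{(\_)}\to\Om^0_{(\_)}$ assemble into a morphism from the second triangle to the first, i.e.\ a commutative ladder whose three vertical arrows are precisely the maps whose cones are $\Om^\times_X$, $\Om^\times_Y\oplus\Om^\times_Z$, and $\Om^\times_W$.

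Given such a morphism of exact triangles, the cones of the three vertical maps themselves fit into an exact triangle; this is the standard fact that in a triangulated category a map of triangles can be completed to a $3\times 3$ diagram of triangles (using the octahedral axiom / the nine-lemma for triangulated categories). Applying this, the triangle on cones reads
\begin{equation*}
  \xymatrix{ \Om^\times_X \ar[r] & \Om^\times_{Y}\oplus\Om^\times_{Z} \ar[r] & \Om^\times_W \ar[r]^-{+1} & ,}
\end{equation*}
which is exactly the assertion. Here I use that the cone of $\sO_Y\oplus\sO_Z\to\Om^0_Y\oplus\Om^0_Z$ is $\Om^\times_Y\oplus\Om^\times_Z$, since taking cones commutes with direct sums.

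The only genuinely delicate point, and the step I expect to need the most care, is verifying the commutativity of the ladder, i.e.\ that the maps $\sO_{(\_)}\to\Om^0_{(\_)}$ are compatible with the restriction/Mayer–Vietoris morphisms in both triangles. This compatibility is really the functoriality of $\sO_{(\_)}\to\Om^0_{(\_)}$ asserted in \eqref{item:dR-to-DB} of Theorem~\ref{defDB}, applied to the closed embeddings $W\hookrightarrow Y$, $W\hookrightarrow Z$, $Y\hookrightarrow X$, $Z\hookrightarrow X$; the two triangles are both constructed from these embeddings, so the squares commute by naturality. Once this is in hand, the completion-to-a-triangle step is formal. (An alternative, avoiding any appeal to the octahedral axiom, is simply to take the long exact sequences of cohomology sheaves of the two Mayer–Vietoris triangles, relate them by the five-lemma-style diagram chase through the defining triangles of the defects, and read off the triangle on defects; but the cone-of-a-map-of-triangles argument is cleaner and is what I would present.)
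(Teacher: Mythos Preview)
Your proposal is correct and follows essentially the same approach as the paper: write down the Mayer--Vietoris triangle for structure sheaves alongside the $p=0$ instance of (\ref{defDB}.\ref{item:2}), observe that the natural maps $\sO_{(\_)}\to\Om^0_{(\_)}$ give a morphism of triangles, and then invoke the derived-category nine-lemma to obtain the triangle on cones. The paper's proof is terser but identical in substance.
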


\begin{proof}
  Recall that there is an analogous exact triangle (a.k.a.\ a short exact sequence)
  for the structure sheaves of $X,Y,Z$, and $W$, which forms a commutative diagram
  with the exact triangle of (\ref{defDB}.\ref{item:2}),
  $$
  \xymatrix{ 
    \sO_X \ar[d] \ar[r] & \sO_{Y} \oplus \sO_{Z}
    \ar[d] \ar[r]^-{-}  &  \sO_W \ar[d] \ar[r]^-{+1} & \\
    \Om^0_X \ar[r] & \Om^0_{Y} \oplus \Om^0_{Z}
    \ar[r]^-{-}  &  \Om^0_W \ar[r]^-{+1} & .\\
  }
  $$
  Then the statement follows by the (derived category version of the) 9-lemma.
\end{proof}

\section{Pairs and generalized pairs}\label{sec:pairs-gener-pairs}

\subsection{Basic definitions}\label{ssec:basic-definitions}

For an arbitrary proper birational morphism, $\phi:Y\to X$, $\exc(\phi)$ stands for
the exceptional locus of $\phi$.  A \emph{$\bQ$-divisor} is a $\bQ$-linear
combination of integral Weil divisors; $\pairD=\sum a_i\pairD_i$, $a_i\in\bQ$,
$\pairD_i$ (integral) Weil divisor. For a $\bQ$-divisor $\pairD$, its
\emph{round-down} is defined by the formula: $\rdown \pairD=\sum
\rdown{a_i}\pairD_i$, where $\rdown{a_i}$ is the largest integer not larger than
$a_i$.

  A \emph{log variety} or \emph{pair} $(X,\pairD)$ consists of an equidimensional
  variety (i.e., a reduced scheme of finite type over a field $k$) $X$ and an
  effective $\bQ$-divisor $\pairD\subseteq X$. 
  A morphism of pairs $\phi:(Y,B)\to (X,\pairD)$ is a morphism $\phi:Y\to X$ such
  that $\phi(\supp B)\subseteq \supp\pairD$.

Let $(X,\pairD)$ be a pair with $\pairD$ a reduced integral divisor. Then
$(X,\pairD)$ is said to have \emph{simple normal crossings} or to be an \emph{snc
  pair at $p\in X$} if $X$ is smooth at $p$, and there are local coordinates
$x_1,\dots,x_n$ on $X$ in a neighbourhood of $p$ such that $\supp\pairD\subseteq
(x_1\cdots x_n=0)$ near $p$.  $(X,\pairD)$ is \emph{snc} if it is snc at every $p\in
X$.

  A morphism of pairs $\phi:(Y,\pairD_Y)\to (X,\pairD)$ is a \emph{log resolution of
    $(X,\pairD)$} if $\phi:Y\to X$ is proper and birational,
  $\pairD_Y=\phi^{-1}_*\pairD$, and $(\pairD_Y)_{\red}+\exc(\phi)$ is an snc divisor
  on $Y$.

  Note that we allow $(X,\pairD)$ to be snc and still call a morphism with these
  properties a log resolution. Also note that the notion of a log resolution is not
  used consistently in the literature.

  If $(X,\Delta)$ is a pair, then $\Delta$ is called a \emph{boundary} if
  $\rdown{(1-\varepsilon)\Delta}=0$ for all $0<\varepsilon<1$, i.e., the coefficients
  of all irreducible components of $\Delta$ are in the interval $[0,1]$.  For the
  definition of \emph{klt, dlt}, and \emph{lc} pairs see \cite{KM98}.
  %
  Let $(X, \Delta)$ be a pair and $\mu:X^{\rm m}\to X$ a proper birational
  \uj{morphism.}
  Let $E=\sum a_iE_i$ be the discrepancy divisor, i.e., a linear combination of
  exceptional divisors such that
  $$
  K_{X^{\rm m}}+\mu^{-1}_*\Delta \sim_{\bQ} \mu^*(K_X + \Delta) + E
  $$ and let
  $\Delta^{\rm m}\leteq \mu^{-1}_*\Delta + \sum_{a_i\leq -1}E_i$.  For an irreducible
  divisor $F$ on a birational model of $X$ we define its discrepancy as its
  coefficient in $E$.  Notice that as divisors correspond to valuations, this
  discrepancy is independent of the model chosen, it only depends on the divisor.  A
  \emph{non-klt place} of a pair $(X,\Delta)$ is an irreducible divisor $F$ over $X$
  with discrepancy at most $-1$ and a \emph{non-klt center} is the image of any
  non-klt place.  $\excnklt(\mu)$ denotes the union of the loci of all non-klt places
  of $\phi$.

Note that in the
  literature, non-klt places and centers are often called log canonical places and
  centers. For a more detailed and precise definition see
  \cite[p.37]{Hacon-Kovacs10}.
  
  Now if $(X^{\rm m}, \Delta^{\rm m})$ is as above, then it is a \emph{minimal dlt
    model} of $(X,\Delta)$ if it is a dlt pair and the discrepancy of every
  $\mu$-exceptional divisor is at most $-1$ cf.\ \cite{KK10}. Note that if
  $(X,\Delta)$ is lc with a minimal dlt model $(X^{\rm m}, \Delta^{\rm m})$, then
  $K_{X^{\rm m}}+\Delta^{\rm m} \sim_{\bQ} \mu^*(K_X+\Delta)$.

\subsection{Rational pairs}

Recall the definition of a \emph{log resolution} from \eqref{ssec:basic-definitions}:
A morphism of pairs $\phi:(Y,\pairD_Y)\to (X,\pairD)$ is a \emph{log resolution of
  $(X,\pairD)$} if $\phi:Y\to X$ is proper and birational,
$\pairD_Y=\phi^{-1}_*\pairD$, and $(\pairD_Y)_{\red}+\exc(\phi)$ is an snc divisor on
$Y$.

\begin{defn}\label{def:normal-pair}
  Let $(X,\pairD)$ be a pair and $\pairD$ an integral divisor. Then $(X,\pairD)$ is
  called a \emph{normal pair} if there exists a log resolution $\phi:(Y,\pairD_Y)\to
  (X,\pairD)$ such that the natural morphism $\phi^\#:\sO_X(- \pairD)\to
  \phi_*\sO_Y(-{\pairD_Y})$ is an isomorphism.
\end{defn}

\begin{defn}
  \label{def:weakly-rtl-sing-pairs}
  A pair $(X,\pairD)$ with $\pairD$ an integral divisor is called a \emph{weakly
    rational pair} if there is a log resolution $\phi:(Y,\pairD_Y)\to (X,\pairD)$
  such that the natural morphism $\sO_X(- \pairD)\to \myR\phi_*\sO_Y(-{\pairD_Y})$
  has a left inverse.
\end{defn}

\begin{lem}
  Let $(X,\pairD)$ be a weakly rational pair. Then it is a normal pair.
\end{lem}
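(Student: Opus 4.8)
The plan is to show that the very log resolution $\phi\col(Y,\pairD_Y)\to(X,\pairD)$ supplied by the weak rationality hypothesis already exhibits $(X,\pairD)$ as a normal pair, so that no new resolution is needed. Write $\alpha\col\sO_X(-\pairD)\to\myR\phi_*\sO_Y(-\pairD_Y)$ for the natural morphism and let $\alpha^\ell$ be its left inverse in the derived category, so $\alpha^\ell\circ\alpha=\id$. First I would pass to zeroth cohomology sheaves. Since $h^0$ is a functor from $D(X)$ to $\sO_X$-modules and $\sO_X(-\pairD)$ sits in degree $0$, applying $h^0$ to $\alpha^\ell\circ\alpha=\id$ yields $\beta\circ\phi^\#=\id_{\sO_X(-\pairD)}$, where $\phi^\#=h^0(\alpha)\col\sO_X(-\pairD)\to\phi_*\sO_Y(-\pairD_Y)$ is precisely the natural map of Definition~\ref{def:normal-pair} and $\beta\leteq h^0(\alpha^\ell)$. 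Thus $\phi^\#$ is a split monomorphism of $\sO_X$-modules; setting $\sF\leteq\phi_*\sO_Y(-\pairD_Y)$, the retraction $\beta$ gives a decomposition $\sF\iso\sO_X(-\pairD)\oplus\ker\beta$ and, in particular, an isomorphism $\coker\phi^\#\iso\ker\beta$.

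It then remains to prove that this cokernel vanishes, and here I would combine two observations. The first is that $\sF$ is torsion-free: $\sO_Y(-\pairD_Y)$ is invertible, hence torsion-free, on the smooth variety $Y$, and the pushforward of a torsion-free sheaf along the birational morphism $\phi$ is again torsion-free, because a nonzerodivisor of $\sO_X$ pulls back to a nonzerodivisor of $\sO_Y$ (the generic points of $Y$ and $X$ correspond, $\phi$ being birational, so the pullback vanishes on no component of $Y$). Consequently $\ker\beta$, as a subsheaf of $\sF$, is torsion-free as well. The second observation is that $\coker\phi^\#$ is supported on a nowhere-dense closed subset: since $\phi$ is birational there is a dense open $U\subseteq X$ over which $\phi$ is an isomorphism carrying $\pairD_Y$ to $\pairD$, so $\phi^\#\resto U$ is an isomorphism and $\coker\phi^\#$ is supported in the proper closed set $X\setminus U$.

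To finish, I would invoke the elementary fact that a torsion-free sheaf supported on a nowhere-dense closed subset is zero (its stalks at the generic points of $X$ vanish, and torsion-freeness embeds it into the product of those stalks). Applying this to $\coker\phi^\#\iso\ker\beta$ forces $\coker\phi^\#=0$, so $\phi^\#$ is surjective, hence an isomorphism, and $(X,\pairD)$ is a normal pair in the sense of Definition~\ref{def:normal-pair}.

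I expect the genuinely delicate point to be the passage from the hypothesis to the conclusion via the \emph{splitting} rather than via mere injectivity. Taking $h^0$ of $\alpha$ by itself only shows that $\phi^\#$ is injective, and an injective map of rank-one sheaves that is an isomorphism over a dense open need not be surjective (e.g.\ $\sO_X(-P)\into\sO_X$ on a smooth curve). It is precisely the derived left inverse that promotes $\phi^\#$ to a split monomorphism and thereby realizes $\coker\phi^\#$ as a \emph{sub}sheaf of the torsion-free $\sF$; verifying torsion-freeness of the pushforward is the only other step requiring care, and it is where the smoothness of $Y$ and the birationality of $\phi$ enter.
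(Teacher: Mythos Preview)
Your argument is correct and follows essentially the same route as the paper's proof: pass to $h^0$ to obtain a sheaf-level left inverse of $\phi^\#$, identify $\coker\phi^\#$ with the kernel of that retraction, and kill it by observing that it is a torsion sheaf (supported away from the locus where $\phi$ is an isomorphism) sitting inside the torsion-free sheaf $\phi_*\sO_Y(-\pairD_Y)$. The paper compresses all of this into three sentences, but the logic is identical; your closing remark on why the splitting (and not mere injectivity) is essential is a welcome clarification.
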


\begin{proof}
  The $0^\text{th}$ cohomology of the left inverse of $\sO_X(- \pairD)\to
  \myR\phi_*\sO_Y(-{\pairD_Y})$ gives a left inverse of $\phi^\#:\sO_X(- \pairD)\to
  \phi_*\sO_Y(-{\pairD_Y})$. As the morphism $\phi$ is birational, the kernel of the
  left inverse of $\phi^\#$ is a torsion sheaf. However, since
  $\phi_*\sO_Y(-{\pairD_Y})$ is torsion-free, this implies that $\phi^\#$ is an
  isomorphism.
\end{proof}

\begin{defn}\cite{KollarKovacsRP}
  \label{def:rat-sing-pairs}
  Let $(X,\pairD)$ be a pair where $\pairD$ is an integral divisor. Then $(X,\pairD)$
  is called a \emph{rational pair} if there exists a log resolution
  $\phi:(Y,\pairD_Y)\to (X,\pairD)$ such that
  \begin{enumerate-cont}
  \item $\sO_X(-\pairD)\simeq \phi_*\sO_Y(-{\pairD_Y})$, i.e., $(X,\pairD)$ is
    normal,
  \item $\myR^i\phi_*\sO_Y(-{\pairD_Y})=0$ for $i>0$, and
  \item $\myR^i\phi_*\omega_Y({\pairD_Y})=0$ for $i>0$.
    \label{item:GR-vanishing-for-pairs}
  \end{enumerate-cont}
\end{defn}

\begin{lem}
  Let $(X,\pairD)$ be a pair where $\pairD$ is an integral divisor. Then it is a
  rational pair if and only if it is a weakly rational pair and
  $\myR^i\phi_*\omega_Y({\pairD_Y})=0$ for $i>0$.
\end{lem}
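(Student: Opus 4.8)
The plan is to separate the two implications; almost all the work is in the ``if'' direction. For the ``only if'' direction, if $(X,\pairD)$ is a rational pair via a log resolution $\phi$, then conditions (1) and (2) of Definition~\ref{def:rat-sing-pairs} together say that the natural map $\sO_X(-\pairD)\to\myR\phi_*\sO_Y(-\pairD_Y)$ induces an isomorphism on $h^0$ and has vanishing higher cohomology, i.e.\ it is a quasi-isomorphism. A quasi-isomorphism is an isomorphism in $D(X)$ and hence trivially has a left inverse, so $(X,\pairD)$ is weakly rational; condition (3) is precisely the stated vanishing $\myR^i\phi_*\omega_Y(\pairD_Y)=0$. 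Thus the real content is the converse.

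For the converse, I would fix a log resolution $\phi$ witnessing weak rationality, so that $\alpha\colon\sO_X(-\pairD)\to G:=\myR\phi_*\sO_Y(-\pairD_Y)$ has a left inverse, and assume in addition that $\myR^i\phi_*\omega_Y(\pairD_Y)=0$ for $i>0$. By the lemma that weakly rational pairs are normal pairs, condition (1) holds and $h^0(\alpha)\colon\sO_X(-\pairD)\to\phi_*\sO_Y(-\pairD_Y)$ is an isomorphism, while condition (3) is the added hypothesis; so it remains only to prove condition (2), namely $\myR^i\phi_*\sO_Y(-\pairD_Y)=0$ for $i>0$. Since $\alpha$ admits a left inverse, the distinguished triangle on $\alpha$ splits and yields $G\qis\sO_X(-\pairD)\oplus C$, where $C$ is the cone of $\alpha$. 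Reading off the long exact sequence of cohomology sheaves, $h^0(C)=0$ (because $h^0(\alpha)$ is an isomorphism) and $h^i(C)\simeq\myR^i\phi_*\sO_Y(-\pairD_Y)$ for $i\ge1$; since $\phi$ has relative dimension $<n:=\dim X$, also $h^i(C)=0$ for $i\ge n$. Hence $C$ is concentrated in degrees $[1,n-1]$, and condition (2) is equivalent to $C\qis0$.

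To show $C\qis0$ I would apply Grothendieck duality. Write $\bD_X=\myR\sHom_X(-,\omega_X^\bullet)$ and $\bD_Y=\myR\sHom_Y(-,\omega_Y[n])$ for the dualizing functors (the dualizing complex $\omega_X^\bullet$ exists as $X$ is of finite type over $\bC$). Because $Y$ is smooth of dimension $n$, $\bD_Y(\sO_Y(-\pairD_Y))\qis\omega_Y(\pairD_Y)[n]$, and relative duality for the proper morphism $\phi$ gives $\bD_X(G)\qis\myR\phi_*\bD_Y(\sO_Y(-\pairD_Y))\qis\bigl(\myR\phi_*\omega_Y(\pairD_Y)\bigr)[n]$. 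The hypothesis $\myR^i\phi_*\omega_Y(\pairD_Y)=0$ for $i>0$ collapses this to the single sheaf $\phi_*\omega_Y(\pairD_Y)$ placed in degree $-n$. Applying $\bD_X$ to the splitting exhibits $\bD_X(C)$ as a direct summand of $\phi_*\omega_Y(\pairD_Y)[n]$; being a summand of a sheaf placed in a single degree, $\bD_X(C)\qis Q[n]$ for a coherent sheaf $Q$ that is a direct summand of $\phi_*\omega_Y(\pairD_Y)$. As $\omega_Y(\pairD_Y)$ is a line bundle and $X$ is integral, $\phi_*\omega_Y(\pairD_Y)$ is torsion-free, and therefore so is its summand $Q$.

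Finally, biduality gives $C\qis\bD_X(Q[n])\qis\myR\sHom_X(Q,\omega_X^\bullet)[-n]$, whose lowest cohomology sheaf is $h^0(C)=\sHom_X(Q,\omega_X)$, where $\omega_X=h^{-n}(\omega_X^\bullet)$ is the canonical sheaf of the normal variety $X$. But $h^0(C)=0$, so $\sHom_X(Q,\omega_X)=0$; since $\omega_X$ is a torsion-free rank-one sheaf and $Q$ is torsion-free, this forces $Q=0$, whence $\bD_X(C)\qis0$ and $C\qis0$. This establishes condition (2) and completes the proof. The step I expect to be the main obstacle is the duality bookkeeping over a base $X$ that is a priori only normal rather than Cohen--Macaulay: one must ensure that $\bD_X(C)$ is genuinely a sheaf in a single degree and correctly identify the lowest cohomology of its dual with $\sHom_X(Q,\omega_X)$. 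The feature that makes the argument close is that torsion-freeness of $Q$ together with $h^0(C)=0$ already forces $Q=0$, so no Cohen--Macaulay hypothesis on $X$ is needed; a smaller technical point is the splitting of the triangle on $\alpha$, which is the general fact that a retraction in a triangulated category splits.
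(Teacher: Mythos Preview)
Your argument is correct and essentially complete; the paper itself gives no proof beyond a pointer to \cite[105]{KollarKovacsRP}, so you have supplied what the paper outsources. The route you take---split off the cone $C$ using the left inverse, dualize to see that $\bD_X(C)$ is a summand of a single sheaf, then use torsion-freeness and $h^0(C)=0$ to kill it---is the natural generalization of the absolute case treated in \cite{Kovacs00b}, and is almost certainly the argument behind the cited reference.

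Two small points. First, you call $X$ ``integral'' and later ``the normal variety $X$'', but neither is part of the hypotheses: a \emph{pair} in this paper only requires $X$ to be a reduced, equidimensional variety, and a \emph{normal pair} (Definition~\ref{def:normal-pair}) is a relative condition on $\sO_X(-\pairD)\to\phi_*\sO_Y(-\pairD_Y)$, not normality of $X$. Fortunately your argument does not actually use either: torsion-freeness of $\phi_*\omega_Y(\pairD_Y)$ needs only that $Y$ is reduced and $\phi$ is dominant on every component, and the step ``$\sHom_X(Q,\omega_X)=0$ with $Q$ torsion-free forces $Q=0$'' needs only that $\omega_X$ is a line bundle on the smooth locus of $X$, which is dense because $X$ is reduced and we are in characteristic zero. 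So the proof stands once you adjust the wording.

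Second, you use without comment that a direct summand in $D^b_{\rm coh}(X)$ of a complex concentrated in a single degree is itself a sheaf in that degree; this is immediate from taking cohomology sheaves, but worth a half-line since it is the hinge between the derived and abelian parts of the argument. Your identification $h^{-n}\bigl(\myR\sHom_X(Q,\omega_X^\bullet)\bigr)\simeq\sHom_X(Q,\omega_X)$ is correct via the hypercohomology spectral sequence, using only that $h^q(\omega_X^\bullet)=0$ for $q<-n$; no Cohen--Macaulay assumption on $X$ is needed, exactly as you anticipated.
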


\begin{proof}
  This follows directly from \cite[105]{KollarKovacsRP}. 
\end{proof}

\begin{rem}\label{rem:rtl-is-relative}
  Note that the notion of a \emph{rational pair} describes the ``singularity'' of the
  relationship between $X$ and $\Delta$.  From the definition it is not clear for
  instance whether $(X,\Delta)$ being rational implies that $X$ has rational
  singularities.   
\end{rem}

\begin{rem}
  If $\Delta=\emptyset$, then
  (\ref{def:rat-sing-pairs}.\ref{item:GR-vanishing-for-pairs}) follows from
  Grauert-Riemenschneider vanishing and $X$ is weakly rational if and only if it is
  rational by \cite{Kovacs00b}. 
\end{rem}

\subsection{Generalized pairs}

\begin{defn}
  A \emph{generalized pair} $(X,\Sigma)$ consists of an equidimensional variety
  (i.e., a reduced scheme of finite type over a field $k$) $X$ and a subscheme
  $\Sigma\subseteq X$.  A morphism of generalized pairs $\phi:(Y,\Gamma)\to
  (X,\Sigma)$ is a morphism $\phi:Y\to X$ such that $\phi(\Gamma)\subseteq \Sigma$.
  A \emph{reduced generalized pair} is a generalized pair $(X,\Sigma)$ such that
  $\Sigma$ is reduced.

  The \emph{log resolution} of a generalized pair $(X, W)$ is a proper birational
  morphism $\pi: \wt X\to X$ such that $\exc(\pi)$ is a divisor and
  $\pi^{-1}W+\exc(\pi)$ is an snc divisor.

  Let $X$ be a complex scheme and $\Sigma$ a closed subscheme whose complement in $X$
  is dense. Then $(X_{\kdot}, \Sigma_\kdot)\to (X, \Sigma)$ is a \emph{good
    hyperresolution} if $X_\kdot\to X$ is a hyperresolution, and if
  $U_\kdot=X_\kdot\times_X (X\setminus \Sigma)$ and $\Sigma_\kdot=X_\kdot\setminus
  U_\kdot$, then, for all $\alpha$, either $\Sigma_\alpha$ is a divisor with normal
  crossings on $X_\alpha$ or $\Sigma_\alpha=X_\alpha$. Notice that it is possible
  that $X_\kdot$ has components that map into $\Sigma$. These component are contained
  in $\Sigma_\mydot$.  For more details and the existence of such hyperresolutions
  see \cite[6.2]{DuBois81} and \cite[IV.1.21, IV.1.25, IV.2.1]{GNPP88}.  For a primer
  on hyperresolutions see the appendix of \cite{Kovacs-Schwede09}.
\end{defn}

Let $(X,\Sigma)$ be a reduced generalized pair.  Consider the \uj{Deligne-Du~Bois} complex
of $(X,\Sigma)$ defined by Steenbrink \cite[\S 3]{Steenbrink85}:

\begin{defn}
  The \emph{\uj{Deligne-Du~Bois} complex} of the reduced generalized pair $(X,\Sigma)$ is
  the mapping cone of the natural morphism $\varrho:\Om^\kdot_X\to \Om^\kdot_\Sigma$
  twisted by $(-1)$.  In other words, it is an object $\Om^\kdot_{X,\Sigma}$ in
  $D_{\rm filt}(X)$ such that it completes $\varrho$ to an exact triangle:
  \begin{equation}
    \label{eq:11}
    \xymatrix{%
      \Om^\kdot_{X,\Sigma} \ar[r] & \Om^\kdot_{X} \ar[r] & \Om^\kdot_{\Sigma}
      \ar[r]^-{+1} & .  }
  \end{equation}
  The associated graded quotients of $\Om^\kdot_{X,\Sigma}$ will be denoted as usual:
  $$
  \Om^p_{X,\Sigma}\leteq Gr^p_{\rm filt}\Om^\kdot_{X,\Sigma}[p].
  $$
  Notice that the above triangle is in $D_{\rm filt}(X)$ and hence for all $p\in\bN$
  we obtain another exact triangle:
  \begin{equation}
    \label{eq:4}
    \xymatrix{%
      \Om^p_{X,\Sigma} \ar[r] &  \Om^p_{X} \ar[r] & \Om^p_{\Sigma}
      \ar[r]^-{+1} & . 
    }
  \end{equation}
\end{defn}

\begin{example}
  Let $(X,\Sigma)$ be an snc pair. Then $\Om^\kdot_{X,\Sigma}\qis
  \Omega_X^\kdot(\log\Sigma)(-\Sigma)$.
\end{example}

\noindent
The \uj{Deligne-Du~Bois} complex of a pair is funtorial in the following sense:

\begin{prop}
  Let $\phi:(Y,\Gamma)\to (X,\Delta)$ be a morphism of generalized pairs.  Then there
  exists a filtered natural morphism $\Om^\kdot_{X,\Sigma}\to
  \myR\phi_*\Om_{Y,\Gamma}^\kdot$.
\end{prop}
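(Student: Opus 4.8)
The plan is to obtain the morphism by applying the functoriality of the absolute Deligne--Du~Bois complex, (\ref{defDB}.\ref{functorial}), to the two morphisms of schemes underlying $\phi$, and then passing to mapping cones. (Here I read the target pair as $(X,\Sigma)$, to match \eqref{eq:11}.) First I would note that since $\phi(\Gamma)\subseteq\Sigma$, the morphism $\phi$ restricts to a morphism $\psi\leteq\phi|_\Gamma\col\Gamma\to\Sigma$; writing $\iota\col\Sigma\into X$ and $j\col\Gamma\into Y$ for the closed embeddings, one has $\phi\circ j=\iota\circ\psi$. Applying (\ref{defDB}.\ref{functorial}) to $\phi$ and to $\psi$ produces filtered natural maps $\phi^{*}\col\Om^\kdot_X\to\myR\phi_{*}\Om^\kdot_Y$ and $\psi^{*}\col\Om^\kdot_\Sigma\to\myR\psi_{*}\Om^\kdot_\Gamma$, and by the convention on closed embeddings recorded in \ref{demo:defs-and-not} I would regard the latter as a map $\Om^\kdot_\Sigma\to\myR\phi_{*}\Om^\kdot_\Gamma$ in $D_{\rm filt}(X)$.

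Next I would check the compatibility of these functoriality maps with the restriction morphisms $\varrho$ that define the relative complexes. The point is that $\varrho_X\col\Om^\kdot_X\to\Om^\kdot_\Sigma$ is itself the functoriality map $\iota^{*}$ of the closed embedding $\iota$, and likewise $\varrho_Y=j^{*}$. Since the functoriality in (\ref{defDB}.\ref{functorial}) is compatible with composition, the two evaluations of $(\phi\circ j)^{*}=(\iota\circ\psi)^{*}$ agree, giving $\myR\phi_{*}(\varrho_Y)\circ\phi^{*}=\psi^{*}\circ\varrho_X$; that is, the square
$$
\xymatrix{
\Om^\kdot_X \ar[r]^-{\varrho_X} \ar[d]_-{\phi^{*}} & \Om^\kdot_\Sigma \ar[d]^-{\psi^{*}} \\
\myR\phi_{*}\Om^\kdot_Y \ar[r]^-{\myR\phi_{*}\varrho_Y} & \myR\phi_{*}\Om^\kdot_\Gamma
}
$$
commutes in $D_{\rm filt}(X)$.

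Now, since $\myR\phi_{*}$ is a triangulated functor, applying it to the defining triangle \eqref{eq:11} for $(Y,\Gamma)$ yields an exact triangle with terms $\myR\phi_{*}\Om^\kdot_{Y,\Gamma}$, $\myR\phi_{*}\Om^\kdot_Y$, $\myR\phi_{*}\Om^\kdot_\Gamma$. The commuting square above is exactly a morphism between the two middle terms of the triangle \eqref{eq:11} for $(X,\Sigma)$ and of this pushed-forward triangle, so by the axiom on morphisms of triangles (TR3) there exists a filtered morphism $\Om^\kdot_{X,\Sigma}\to\myR\phi_{*}\Om^\kdot_{Y,\Gamma}$ completing these into a morphism of exact triangles. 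This is the asserted morphism, and passing to the associated graded pieces gives the corresponding maps $\Om^p_{X,\Sigma}\to\myR\phi_{*}\Om^p_{Y,\Gamma}$ compatible with \eqref{eq:4}.

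The step I expect to be the real obstacle is \emph{naturality}: the completion provided by (TR3) is in general neither unique nor functorial, so the triangulated formalism alone does not deliver a \emph{natural} morphism. The clean remedy is to perform the construction at the level of complexes rather than in the derived category. Following Steenbrink's definition of $\Om^\kdot_{X,\Sigma}$ via a good hyperresolution of the pair \cite[\S 3]{Steenbrink85}, and using that a morphism of generalized pairs can be covered by a morphism of compatible hyperresolutions (cf.\ \cite{GNPP88}), the map $\phi^{*}$ is represented by honest pullback of differential forms, the square commutes on the nose, and $\Om^\kdot_{X,\Sigma}$ and $\Om^\kdot_{Y,\Gamma}$ are represented by genuine mapping cones. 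The induced map of cones is then canonical, and its independence of the chosen hyperresolutions---hence naturality in $\phi$---follows from the uniqueness of the Deligne--Du~Bois complex up to filtered quasi-isomorphism together with the functoriality of the hyperresolutions.
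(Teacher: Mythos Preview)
Your proof is correct and follows essentially the same approach as the paper: both invoke the functoriality (\ref{defDB}.\ref{functorial}) for $\phi$ and for $\phi|_\Gamma$ to obtain a commuting square between the two defining triangles, and then complete to a morphism of triangles via (TR3). Your treatment is in fact more careful than the paper's, which simply asserts the compatibility and does not address the non-uniqueness issue in (TR3); your final paragraph on achieving genuine naturality at the level of complexes via good hyperresolutions is a legitimate refinement that the paper omits.
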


\begin{proof}
  There exist compatible filtered natural morphisms $\Om_X^\kdot\to
  \myR\phi_*\Om_Y^\kdot$ and $\Om_\Sigma^\kdot\to \myR\phi_*\Om_\Gamma^\kdot$ by
  (\ref{defDB}.\ref{functorial}). They induce the following morphism between exact
  triangles,
  $$
  \xymatrix{%
    \Om^0_{X,\Sigma} \ar[r]\ar@{-->}[d] & \Om^0_{X} \ar[r]\ar[d] & \Om^0_{\Sigma}
    \ar[r]^-{+1} \ar[d] & \\
    \myR\phi_*\Om^0_{Y,\Gamma} \ar[r] & \myR\phi_*\Om^0_{Y} \ar[r] &
    \myR\phi_*\Om^0_{\Gamma} \ar[r]^-{+1} & , }
  $$
  and thus one obtains the desired natural morphism.
\end{proof}

It follows easily from the definition and \eqref{prop:top-coh-vanishes} that we have
the following bounds on the non-zero cohomology sheaves of $\Om_{X,\Sigma}^p$.

\begin{prop}\label{prop:top-coh-vanishes-rel}
  Let $X$ be a positive dimensional variety. Then the $i^{\text{th}}$ cohomology
  sheaf of $\ul{\Omega}_{X,\Sigma}^p$ vanishes for all $i\geq \dim X$, i.e.,
  $h^i(\ul{\Omega}_{X,\Sigma}^p)=0$ for all $p$ and for all $i\geq \dim X$.
\end{prop}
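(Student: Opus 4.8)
The plan is to read the statement directly off the defining exact triangle \eqref{eq:4} together with the absolute bound already proved in \eqref{prop:top-coh-vanishes}. Write $n=\dim X$. Since $(X,\Sigma)$ is a reduced generalized pair, $\Sigma$ is a reduced proper closed subscheme of $X$ whose complement is dense, so $\Sigma$ contains no generic point of $X$ and hence $\dim\Sigma\le n-1$. The only input I would lean on beyond the definition is \eqref{prop:top-coh-vanishes}, applied not only to $X$ but also to $\Sigma$.

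First I would take the long exact sequence of cohomology sheaves attached to the triangle $\Om^p_{X,\Sigma}\to\Om^p_X\to\Om^p_\Sigma\xrightarrow{+1}$ of \eqref{eq:4}. Around degree $i$ it reads
\begin{equation*}
  h^{i-1}(\Om^p_\Sigma)\longrightarrow h^i(\Om^p_{X,\Sigma})\longrightarrow h^i(\Om^p_X).
\end{equation*}
For $i\ge n$ the right-hand term vanishes by \eqref{prop:top-coh-vanishes} applied to $X$, so $h^i(\Om^p_{X,\Sigma})$ is a quotient of $h^{i-1}(\Om^p_\Sigma)$ and it suffices to show $h^{i-1}(\Om^p_\Sigma)=0$, i.e.\ that $h^j(\Om^p_\Sigma)=0$ for all $j\ge n-1$. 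When $\Sigma$ is positive dimensional this is precisely \eqref{prop:top-coh-vanishes} for $\Sigma$: since $\dim\Sigma\le n-1\le j$, the sheaf $h^j(\Om^p_\Sigma)$ vanishes. Combining the two vanishings gives $h^i(\Om^p_{X,\Sigma})=0$ for every $p$ and every $i\ge n$, as claimed.

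The only place that needs care is the boundary degree $i=n$, where the reduction forces the top cohomology $h^{n-1}(\Om^p_\Sigma)$; this is exactly the critical (and genuinely nontrivial) top-dimensional, $p=0$ case of \eqref{prop:top-coh-vanishes}, which itself rests on \cite[13.7]{GKKP10} and \cite[4.9]{KSS10}. All the real content therefore sits inside \eqref{prop:top-coh-vanishes}, and once the dimension bookkeeping $\dim\Sigma\le n-1$ is in place the long exact sequence does the rest. The remaining gap is purely formal: if $\Sigma$ is not positive dimensional then \eqref{prop:top-coh-vanishes} does not literally apply to it. When $\dim\Sigma=0$ (or $\Sigma=\emptyset$) the complex $\Om^p_\Sigma$ has cohomology concentrated in degree $0$, so $h^j(\Om^p_\Sigma)=0$ for all $j\ge1$ and every case is settled except $n=1$, $p=0$. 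In that last case $h^1(\Om^0_{X,\Sigma})=\coker\bigl(h^0(\Om^0_X)\to h^0(\Om^0_\Sigma)\bigr)$, and I would finish by noting that the composite $\sO_X\to h^0(\Om^0_X)\to h^0(\Om^0_\Sigma)=\sO_\Sigma$ is the restriction map onto the reduced points $\Sigma$, hence surjective, so that the cokernel vanishes.
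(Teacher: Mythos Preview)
Your proof is correct and follows exactly the approach the paper takes: apply \eqref{prop:top-coh-vanishes} to both $X$ and $\Sigma$ via the long exact cohomology sequence of the triangle \eqref{eq:4}. The paper's proof is a one-line reference to this argument; you have simply spelled out the dimension bookkeeping $\dim\Sigma\le n-1$ and the zero-dimensional edge case that the paper leaves implicit.
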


\begin{proof}
  This follows directly from \eqref{prop:top-coh-vanishes} using the long exact
  cohomology sequence associated to (\ref{eq:4}).
\end{proof}

\subsection{DB pairs and the DB defect}

\begin{defn}\label{def:DB-defect}
  Recall the short exact sequence for the restriction of regular functions from $X$
  to $\Sigma$:
  $$
  \xymatrix{%
    0 \ar[r] & \sI_{\Sigma\subseteq X} \ar[r] & \sO_X \ar[r] & \sO_\Sigma \ar[r] & 0.
  }
  $$

  By (\ref{defDB}.\ref{item:dR-to-DB}) there exist compatible natural maps $\sO_X\to
  \Om_X^0$ and $\sO_\Sigma\to \Om_\Sigma^0$, and they induce a morphism between exact
  triangles,
  \begin{equation}
    \label{eq:12}
    \xymatrix{%
      \sI_{\Sigma\subseteq X} \ar[r]\ar@{-->}[d] & \sO_X \ar[r]\ar[d] & 
      \sO_\Sigma \ar[d]  \ar[r]^-{+1} & \\
      \Om^0_{X,\Sigma} \ar[r] & \Om^0_{X} \ar[r] & \Om^0_{\Sigma} \ar[r]^-{+1} & , }
  \end{equation}
  A reduced generalized pair $(X,\Sigma)$ will be called a \emph{DB pair} if the
  natural morphism $\sI_{\Sigma\subseteq X}\to \Om^0_{X,\Sigma}$ from (\ref{eq:12})
  is a quasi-isomorphism.
\end{defn}

\begin{rem}
  Note that just like the notion of a {rational pair}, the notion of a \emph{DB pair}
  describes the ``singularity'' of the relationship between $X$ and $\Sigma$.  From
  the definition it is not clear for instance whether $(X,\Sigma)$ being DB implies
  that $X$ has DB singularities. \ujj{I expect that this is
    not true, but at the time of writing this article I do not know an example of an
    irreducible $X$ and an appropriate $\Sigma\subset X$ such that $(X,\Sigma)$ is a
    DB pair, but $X$ does not have DB singularities.}
\end{rem}

\begin{prop}\label{prop:rel-funtorial}
  Let $\phi:(Y,\Gamma)\to (X,\Sigma)$ be a morphism of generalized pairs.  Then there
  exists a natural morphism $\Om^0_{X,\Sigma}\to \myR\phi_*\Om_{Y,\Gamma}^0$ and a
  commutative diagram,
  \begin{equation*}
    \xymatrix{%
      \sI_{\Sigma\subseteq X} \ar[d]\ar[r] & \Om^0_{X,\Sigma}\ar[d] \\
      \myR\phi_*\sI_{\Gamma\subseteq Y} \ar[r] & \myR\phi_*\Om_{Y,\Gamma}^0 \\
    }
  \end{equation*}
\end{prop}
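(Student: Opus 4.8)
The plan is to produce the two vertical arrows separately and then to recognize the asserted square as one face of a commutative cube of canonical maps. For the right-hand vertical $\Om^0_{X,\Sigma}\to\myR\phi_*\Om^0_{Y,\Gamma}$ I would take the degree-zero graded piece of the filtered functorial morphism $\Om^\kdot_{X,\Sigma}\to\myR\phi_*\Om^\kdot_{Y,\Gamma}$ established above for morphisms of generalized pairs; this is exactly the dashed arrow exhibited in that proof, and by construction it fits into a morphism of the triangles \eqref{eq:4} for $(X,\Sigma)$ and for the $\myR\phi_*$-image of $(Y,\Gamma)$, compatibly with the canonical maps $\Om^0_X\to\myR\phi_*\Om^0_Y$ and $\Om^0_\Sigma\to\myR\phi_*\Om^0_\Gamma$ of (\ref{defDB}.\ref{functorial}). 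For the left-hand vertical I would use that $\phi(\Gamma)\subseteq\Sigma$ with $\Gamma$ reduced forces $\phi\resto\Gamma$ to factor through $\Sigma$, so that $\phi^\#\colon\sO_X\to\myR\phi_*\sO_Y$ and the induced $\sO_\Sigma\to\myR\phi_*\sO_\Gamma$ form a morphism of the two structure-sheaf triangles; passing to kernels yields a natural map $\sI_{\Sigma\subseteq X}\to\myR\phi_*\sI_{\Gamma\subseteq Y}$ sitting in a morphism of triangles with $\phi^\#$ and $\sO_\Sigma\to\myR\phi_*\sO_\Gamma$.

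With these in hand, the square of \eqref{eq:12} for $(X,\Sigma)$ and the $\myR\phi_*$-image of the one for $(Y,\Gamma)$, together with the functoriality maps of (\ref{defDB}.\ref{functorial}), assemble into a cube whose $\sO$-face commutes by naturality of $\phi^\#$ and of the restriction maps, whose $\Om^0$-face commutes by the functoriality morphism just invoked, and whose two $\sO\to\Om^0$ faces commute by the compatibility of (\ref{defDB}.\ref{item:dR-to-DB}) with (\ref{defDB}.\ref{functorial}). The statement of the proposition is precisely the commutativity of the remaining face, the one relating $\sI_{\Sigma\subseteq X}\to\Om^0_{X,\Sigma}$ with $\myR\phi_*\sI_{\Gamma\subseteq Y}\to\myR\phi_*\Om^0_{Y,\Gamma}$.

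The hard part will be exactly this last commutativity, and it is not formal: mapping cones are not functorial, so the commutativity of every other face of the cube does not by itself force the cone-face to commute. Concretely, composing the two candidate maps $\sI_{\Sigma\subseteq X}\to\myR\phi_*\Om^0_{Y,\Gamma}$ further into $\myR\phi_*\Om^0_Y$ does make them agree, by the commuting $\sO$- and $\Om^0$-faces and the defining factorizations of the horizontal arrows in \eqref{eq:12}; but a priori their difference only lifts along the triangle $\myR\phi_*\Om^0_\Gamma[-1]\to\myR\phi_*\Om^0_{Y,\Gamma}\to\myR\phi_*\Om^0_Y$, and need not vanish. To settle this rigorously I would realize all the objects and all the canonical maps on a single model: choose a good hyperresolution of $(X,\Sigma)$ and a compatible good hyperresolution of $(Y,\Gamma)$ related by a morphism covering $\phi$ (refining if necessary), so that $\Om^0_X\qis\myR\varepsilon_{\kdot *}\sO_{X_\kdot}$, the sheaves $\sO_\Sigma$, the ideal sheaves, and $\Om^0_{X,\Sigma}$, together with their counterparts on $Y$, are all represented by honest complexes and every arrow of the cube becomes a genuine map of complexes. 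The cube then commutes on the nose, and in particular the required face commutes in $D(X)$. This is the same mechanism that produces the functoriality used above, so no input is needed beyond the existence of such compatible hyperresolutions (cf.\ the good-hyperresolution construction recalled earlier and \cite{GNPP88}).
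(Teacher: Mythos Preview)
Your approach is essentially the paper's: form the diagram \eqref{eq:12} for $(Y,\Gamma)$, push it forward by $\myR\phi_*$, link it to the diagram for $(X,\Sigma)$ via the functoriality maps of (\ref{defDB}.\ref{functorial}), and read off the desired square as the front face of the resulting cube. The paper simply draws this cube and declares the front face to be the claimed diagram; you go further by flagging that cone non-functoriality prevents the front face from commuting \emph{formally} from the others, and you propose to cure this by realizing all objects and maps via compatible good hyperresolutions so that the cube commutes at the level of complexes---this is exactly the right fix, and it supplies the rigor that the paper's proof leaves implicit.
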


\begin{proof}
  Similarly to (\ref{eq:12}) and one obtains a commutative diagram for $(Y,\Gamma)$:
  $$
  \xymatrix{%
    \sI_{\Gamma\subseteq Y} \ar[r]\ar[d] & \sO_Y \ar[r]\ar[d] & \sO_\Gamma \ar[d]
    \ar[r]^-{+1} & \\
    \Om^0_{Y,\Gamma} \ar[r] & \Om^0_{Y} \ar[r] & \Om^0_{\Gamma}
    \ar[r]^-{+1} & .
  }
  $$
  Then $\phi$ induces a morphism between these diagrams:
    
  $$
  \xymatrix{%
    \sI_{\Sigma\subseteq X} \ar[rr]\ar[dd] \ar[rd] && \sO_X \ar[rr]\ar'[d][dd]
    \ar[rd] && \sO_\Sigma \ar'[d][dd] \ar[rd]
    \ar[r]^-{+1} & \\
    & \Om^0_{X,\Sigma}  \ar[rr]\ar[dd] && \Om^0_{X}
    \ar[rr]\ar[dd] &&  \Om^0_{\Sigma}   \ar[dd] \ar[r]^-{+1} & \\
    \myR\phi_* \sI_{\Gamma\subseteq Y} \ar'[r][rr]\ar[rd] && 
    \myR\phi_* \sO_\Gamma \ar'[r][rr]\ar[rd] &&
    \myR\phi_* \sO_Y  \ar[rd]  \ar[r]^-{+1} & \\
    & \myR\phi_* \Om^0_{Y,\Gamma} \ar[rr] && \myR\phi_* \Om^0_{Y} \ar[rr] && 
    \myR\phi_* \Om^0_{\Gamma} \ar[r]^-{+1} &
    .  }
  $$
  The front face of this diagram provides the one claimed in the statement.
\end{proof}

Similarly to \eqref{def:db-defect} we introduce the {DB defect} of the pair
$(X,\Sigma)$: 

\begin{defn}\label{def:rel-db-defect} The
  \emph{DB defect} of the pair $(X,\Sigma)$ is the mapping cone of the morphism
  $\sI_{\Sigma\subseteq X}\to \Om^0_{X,\Sigma}$. It is denoted by
  $\Om_{X,\Sigma}^\times$.  Again, one has the exact triangles,
  \begin{gather}
    \xymatrix{%
      \sI_{\Sigma\subseteq X} \ar[r] & \Om^0_{X,\Sigma} \ar[r] &
      \Om^\times_{X,\Sigma} \ar[r]^-{+1} & .
    }\label{eq:5}
    \\
    \intertext{and} %
    \xymatrix{%
      \Om^\times_{X,\Sigma} \ar[r] & \Om^\times_{X} \ar[r] & \Om^\times_{\Sigma}
      \ar[r]^-{+1} & .  
    }\label{eq:6}
  \end{gather}
  And, again, one has that 
  \begin{equation}
    \label{eq:8}
    h^0(\Om_{X,\Sigma}^\times)\simeq
    h^0(\Om_{X,\Sigma}^0)/\sI_{\Sigma\subseteq X} \quad\text{and}\quad
    h^i(\Om_{X,\Sigma}^\times)\simeq h^i(\Om_{X,\Sigma}^0) \quad\text{for $i>0$.}
  \end{equation} 
\end{defn}

\begin{lem}
  \label{lem:db-defect-rel}
  Let $(X,\Sigma)$ be a reduced generalized pair. Then the following are equivalent: 
  \begin{enumerate}
  \item The pair $(X,\Sigma)$ is DB.
    \label{item:3}
  \item The DB defect of $(X,\Sigma)$ is acyclic, that is, $\Om_{X,\Sigma}^\times\qis
    0$.
    \label{item:6}
  \item The induced natural morphism $\ul{\Omega}_X^\times\to
    \ul{\Omega}_{\Sigma}^\times$ is a quasi-isomorphism.
    \label{item:7}
  \item The induced natural morphism $h^i(\ul{\Omega}_X^\times)\to
    h^i(\ul{\Omega}_{\Sigma}^\times)$ is an isomorphism for all $i\in\bZ$.
    \label{item:4}
  \item The induced natural morphism $h^i(\ul{\Omega}_X^0)\to
    h^i(\ul{\Omega}_{\Sigma}^0)$ is an isomorphism for all $i\neq 0$ and a surjection
    with kernel isomorphic to $\sI_{\Sigma\subseteq X}$ for $i=0$.
    \label{item:5}
  \end{enumerate}
\end{lem}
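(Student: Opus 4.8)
The plan is to run through the list in the given order, proving
$(\ref{item:3})\Leftrightarrow(\ref{item:6})\Leftrightarrow(\ref{item:7})\Leftrightarrow(\ref{item:4})\Leftrightarrow(\ref{item:5})$; the first three equivalences are formal consequences of the exact triangles already in place, and only the last requires genuine computation. For $(\ref{item:3})\Leftrightarrow(\ref{item:6})$, recall that $\ul{\Omega}^\times_{X,\Sigma}$ is by construction the mapping cone of $\sI_{\Sigma\subseteq X}\to\ul{\Omega}^0_{X,\Sigma}$, and a morphism in the derived category is a quasi-isomorphism exactly when its cone is acyclic; this is read directly off the long exact cohomology sequence of \eqref{eq:5}. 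For $(\ref{item:6})\Leftrightarrow(\ref{item:7})$, I would apply the elementary fact that in an exact triangle $A\to B\to C\xrightarrow{+1}$ the map $B\to C$ is a quasi-isomorphism if and only if $A\qis 0$, here to the triangle \eqref{eq:6}.

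Next, $(\ref{item:7})\Leftrightarrow(\ref{item:4})$ is nothing but the definition of a quasi-isomorphism, since $\ul{\Omega}^\times_X\to\ul{\Omega}^\times_\Sigma$ is a quasi-isomorphism precisely when it induces isomorphisms on every cohomology sheaf. For the remaining equivalence $(\ref{item:4})\Leftrightarrow(\ref{item:5})$ the idea is to translate the defects back into the $\ul{\Omega}^0$'s using the identifications from \eqref{def:db-defect}: $h^i(\ul{\Omega}^\times_X)=0$ for $i<0$, $h^i(\ul{\Omega}^\times_X)\simeq h^i(\ul{\Omega}^0_X)$ for $i>0$, and $h^0(\ul{\Omega}^\times_X)\simeq h^0(\ul{\Omega}^0_X)/\sO_X$, together with the analogous statements for $\Sigma$. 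These identifications are compatible with the restriction maps because $\ul{\Omega}^\times_X\to\ul{\Omega}^\times_\Sigma$ is induced from $\ul{\Omega}^0_X\to\ul{\Omega}^0_\Sigma$ through the morphism of triangles in \eqref{eq:12}. In degrees $i<0$ all four sheaves vanish, using (\ref{defDB}.\ref{item:1}), so both conditions hold vacuously, and in degrees $i>0$ the identification $h^i(\ul{\Omega}^\times)\simeq h^i(\ul{\Omega}^0)$ makes the map of $(\ref{item:4})$ coincide with the map of $(\ref{item:5})$. Thus the equivalence reduces to degree $i=0$.

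The heart of the argument is therefore the case $i=0$, which I would treat by the snake lemma applied to the commutative diagram with exact rows
\[
0\to\sO_X\to h^0(\ul{\Omega}^0_X)\to h^0(\ul{\Omega}^\times_X)\to 0
\quad\text{and}\quad
0\to\sO_\Sigma\to h^0(\ul{\Omega}^0_\Sigma)\to h^0(\ul{\Omega}^\times_\Sigma)\to 0,
\]
whose vertical arrows are the natural restrictions and whose leftmost vertical arrow $\sO_X\to\sO_\Sigma$ is surjective with kernel $\sI_{\Sigma\subseteq X}$. Because that arrow is surjective, the snake sequence collapses to
\[
0\to\sI_{\Sigma\subseteq X}\to\ker\bigl(h^0(\ul{\Omega}^0_X)\to h^0(\ul{\Omega}^0_\Sigma)\bigr)\to\ker\bigl(h^0(\ul{\Omega}^\times_X)\to h^0(\ul{\Omega}^\times_\Sigma)\bigr)\to 0
\]
together with an identification of the cokernels of $h^0(\ul{\Omega}^0_X)\to h^0(\ul{\Omega}^0_\Sigma)$ and $h^0(\ul{\Omega}^\times_X)\to h^0(\ul{\Omega}^\times_\Sigma)$. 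Feeding $(\ref{item:4})$ at $i=0$, that is, the vanishing of both the kernel and the cokernel of the defect map, into this sequence forces $h^0(\ul{\Omega}^0_X)\to h^0(\ul{\Omega}^0_\Sigma)$ to be surjective with kernel $\sI_{\Sigma\subseteq X}$, which is exactly $(\ref{item:5})$ at $i=0$; conversely, $(\ref{item:5})$ feeds back through the same sequence to give $(\ref{item:4})$.

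The one point demanding care, and the place I expect to be the main obstacle, is the bookkeeping at $i=0$: one must ensure that the injection $\sI_{\Sigma\subseteq X}\hookrightarrow\ker(h^0(\ul{\Omega}^0_X)\to h^0(\ul{\Omega}^0_\Sigma))$ produced by the snake lemma is the same natural map implicit in the phrase ``kernel isomorphic to $\sI_{\Sigma\subseteq X}$'' in $(\ref{item:5})$, so that the two sides are genuinely interchangeable rather than only abstractly isomorphic. This compatibility, together with the injectivity of $\sO_X\to h^0(\ul{\Omega}^0_X)$ for reduced $X$ that makes the top row above left-exact, follows from the commutativity of \eqref{eq:12}; once it is in place the snake sequence yields the equivalence cleanly.
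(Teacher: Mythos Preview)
Your proposal is correct and follows essentially the same route as the paper: the paper's proof cites exactly \eqref{eq:5} for $(\ref{item:3})\Leftrightarrow(\ref{item:6})$, \eqref{eq:6} for $(\ref{item:6})\Leftrightarrow(\ref{item:7})$, the definition of quasi-isomorphism for $(\ref{item:7})\Leftrightarrow(\ref{item:4})$, and the identifications $h^0(\Om^\times)\simeq h^0(\Om^0)/\sO$, $h^i(\Om^\times)\simeq h^i(\Om^0)$ for $i>0$ for $(\ref{item:4})\Leftrightarrow(\ref{item:5})$. Your snake-lemma treatment of the $i=0$ case is just a careful unpacking of what the paper leaves implicit in that last step.
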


\begin{subrem}
  This statement also applies in the case when $\Sigma=\emptyset$, so it implies
  \eqref{lem:db-defect}.
\end{subrem}

\begin{proof}
  The equivalence of (\ref{lem:db-defect-rel}.\ref{item:3}) and
  (\ref{lem:db-defect-rel}.\ref{item:6}) follows from (\ref{eq:5}), the equivalence
  of (\ref{lem:db-defect-rel}.\ref{item:6}) and
  (\ref{lem:db-defect-rel}.\ref{item:7}) follows from (\ref{eq:6}), the equivalence
  of (\ref{lem:db-defect-rel}.\ref{item:7}) and
  (\ref{lem:db-defect-rel}.\ref{item:4}) follows from the definition of
  quasi-isomorphism, and the equivalence of (\ref{lem:db-defect-rel}.\ref{item:4})
  and (\ref{lem:db-defect-rel}.\ref{item:5}) follows from the definition of the DB
  defect $\Om_{X,\Sigma}^\times$ \eqref{def:rel-db-defect} and (\ref{eq:8}).
\end{proof}

Cutting by hyperplanes works the same way as in the absolute case:
\begin{prop}\label{prop:db-cx-of-hyper-rel}
  Let $(X,\Sigma)$ be a reduced general pair where $X$ is a quasi-projective variety
  and $H\subset X$ a general member of a very ample linear system. Then
  $\Om_{H,H\cap\Sigma}^\kdot\qis \Om_{X,\Sigma}^\kdot\otimes_L\sO_H$ and
  $\Om_{H,H\cap\Sigma}^\times\qis \Om_{X,\Sigma}^\times\otimes_L\sO_H$.
\end{prop}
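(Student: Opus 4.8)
The plan is to deduce both quasi-isomorphisms from Proposition~\ref{prop:db-cx-of-hyper} by using that the derived restriction functor $({-})\otimes_L\sO_H$ is exact, and hence sends the defining exact triangles of the relative objects to exact triangles. Throughout, generality of $H$ guarantees that $H$ meets $\Sigma$ properly, so that $H\cap\Sigma$ is a general member of the very ample linear system restricted to $\Sigma$; in particular Proposition~\ref{prop:db-cx-of-hyper} applies verbatim to $\Sigma$ with the hyperplane section $H\cap\Sigma$.

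For the first statement I would apply $({-})\otimes_L\sO_H$ to the defining triangle \eqref{eq:11}, obtaining the exact triangle
\[
\xymatrix{
\Om^\kdot_{X,\Sigma}\otimes_L\sO_H \ar[r] & \Om^\kdot_X\otimes_L\sO_H \ar[r] & \Om^\kdot_\Sigma\otimes_L\sO_H \ar[r]^-{+1} & .
}
\]
By Proposition~\ref{prop:db-cx-of-hyper} applied to $X$ the middle term is $\Om^\kdot_H$, and applied to $\Sigma$ the right-hand term is $\Om^\kdot_{H\cap\Sigma}$, where I use the base-change identity $(\iota_*{-})\otimes_L\sO_H\qis\iota_*\bigl(({-})\otimes_L\sO_{H\cap\Sigma}\bigr)$ for the closed embedding $\iota\col\Sigma\into X$ to replace $\otimes_L^{\sO_X}\sO_H$ by $\otimes_L^{\sO_\Sigma}\sO_{H\cap\Sigma}$. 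By the functoriality in (\ref{defDB}.\ref{functorial}) the natural maps are compatible with restriction, so this triangle is compared to the defining triangle $\Om^\kdot_{H,H\cap\Sigma}\to\Om^\kdot_H\to\Om^\kdot_{H\cap\Sigma}\to^{+1}$ of $(H,H\cap\Sigma)$ by a morphism of triangles. Since the comparison is an isomorphism on the middle and right vertices, it is an isomorphism on the left vertex as well, forcing $\Om^\kdot_{H,H\cap\Sigma}\qis\Om^\kdot_{X,\Sigma}\otimes_L\sO_H$; passing to the graded piece $Gr^p$ gives the same statement for each $\Om^p_{X,\Sigma}$, in particular for $p=0$.

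For the DB defect I would argue identically, now starting from \eqref{eq:5}. Applying $({-})\otimes_L\sO_H$ produces a triangle whose middle term is $\Om^0_{X,\Sigma}\otimes_L\sO_H\qis\Om^0_{H,H\cap\Sigma}$ (the $p=0$ case just established) and whose left term is $\sI_{\Sigma\subseteq X}\otimes_L\sO_H$. Restricting the short exact sequence $0\to\sI_{\Sigma\subseteq X}\to\sO_X\to\sO_\Sigma\to0$, and using that for general $H$ one has $\sO_X\otimes_L\sO_H\qis\sO_H$ and $\sO_\Sigma\otimes_L\sO_H\qis\sO_{H\cap\Sigma}$ (the latter because $H$ meets $\Sigma$ properly), I identify $\sI_{\Sigma\subseteq X}\otimes_L\sO_H\qis\sI_{H\cap\Sigma\subseteq H}$. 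Comparing with the defining triangle of $\Om^\times_{H,H\cap\Sigma}$ and invoking the triangulated five lemma once more yields $\Om^\times_{H,H\cap\Sigma}\qis\Om^\times_{X,\Sigma}\otimes_L\sO_H$.

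The bookkeeping is harmless; the two points that require genuine care are, first, the generality hypotheses on $H$ that collapse every derived restriction appearing on the $\Sigma$-side to an honest hyperplane section — proper intersection of $H$ with $\Sigma$, the resulting vanishing of the higher Tor sheaves of $\sO_\Sigma$ against $\sO_H$, and the $\iota_*$ base-change identity above — and second, upgrading the abstract isomorphism of the third vertices to the \emph{asserted} one by verifying that the functorial maps of (\ref{defDB}.\ref{functorial}) genuinely assemble into a morphism of triangles, so that the cone is pinned down rather than merely isomorphic.
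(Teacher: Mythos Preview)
Your argument is correct and follows essentially the same route as the paper: apply the exact functor $({-})\otimes_L\sO_H$ to the defining triangles and invoke the absolute case (Proposition~\ref{prop:db-cx-of-hyper}) for $X$ and for $\Sigma$. The only cosmetic difference is that for the DB defect the paper appeals to triangle~(\ref{eq:6}) together with Proposition~\ref{prop:hyper-plane-cuts} (applied to $X$ and to $\Sigma$), whereas you use triangle~(\ref{eq:5}) together with the restriction $\sI_{\Sigma\subseteq X}\otimes_L\sO_H\qis\sI_{H\cap\Sigma\subseteq H}$; both are instances of the same two-out-of-three principle.
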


\begin{proof}
  This follows directly from \eqref{prop:db-cx-of-hyper}, (\ref{eq:11}), and 
  \eqref{prop:hyper-plane-cuts}.
\end{proof}

We also have the following adjunction type statement.

\begin{prop}
  \label{prop:DB-defect-of-a-union}
  Let $X=(Y\cup Z)_{\red}$ be a union of closed reduced subschemes with intersection
  $W=(Y\cap Z)_{\red}$.  Then the DB defects of the pairs $(X,Y)$ and $(Z,W)$ are
  quasi-isomorphic.  I.e.,
  $$\Om^\times_{X,Y}\qis \Om^\times_{Z,W}.$$ 
\end{prop}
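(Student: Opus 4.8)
The plan is to recognise each relative DB defect as the (shifted) mapping cone of a restriction morphism between \emph{absolute} DB defects, and then to read the claimed quasi-isomorphism off the Mayer--Vietoris triangle of \eqref{prop:DB-defect-for-unions}. The whole argument takes place in the derived category and uses only \eqref{eq:6} together with \eqref{prop:DB-defect-for-unions}.

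First I would apply the exact triangle \eqref{eq:6} to each of the two pairs. For $(X,Y)$ it reads
$$
\Om^\times_{X,Y}\longrightarrow \Om^\times_X \longrightarrow \Om^\times_Y \overset{+1}{\longrightarrow},
$$
so that $\Om^\times_{X,Y}$ is the shifted mapping cone of the restriction $\Om^\times_X\to\Om^\times_Y$, and for $(Z,W)$ it reads
$$
\Om^\times_{Z,W}\longrightarrow \Om^\times_Z \longrightarrow \Om^\times_W \overset{+1}{\longrightarrow},
$$
exhibiting $\Om^\times_{Z,W}$ as the shifted mapping cone of $\Om^\times_Z\to\Om^\times_W$. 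Here the maps are the natural restriction morphisms on DB defects, which exist because the DB defect is functorial for closed embeddings, exactly as used in the construction of \eqref{prop:DB-defect-for-unions}; I also use that $\underline\Omega^\kdot_{(\_)}$ depends only on the reduced structure, so that working with $X,Y,Z,W$ reduced is harmless.

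Next I would record the commutative square
$$
\xymatrix{
  \Om^\times_X \ar[r]\ar[d] & \Om^\times_Y \ar[d] \\
  \Om^\times_Z \ar[r] & \Om^\times_W,
}
$$
in which every arrow is a restriction morphism; it commutes because both composites are the restriction $\Om^\times_X\to\Om^\times_W$ attached to $W\subseteq X$. By \eqref{prop:DB-defect-for-unions} this square is homotopy cartesian, i.e.\ it is completed by the exact triangle
$$
\Om^\times_X \longrightarrow \Om^\times_Y\oplus\Om^\times_Z \longrightarrow \Om^\times_W \overset{+1}{\longrightarrow}.
$$
The proposition then follows from the general fact that the two parallel horizontal edges of a homotopy cartesian square have quasi-isomorphic fibres: the fibre of $\Om^\times_X\to\Om^\times_Y$ is quasi-isomorphic to the fibre of $\Om^\times_Z\to\Om^\times_W$, which by the first step is precisely $\Om^\times_{X,Y}\qis\Om^\times_{Z,W}$.

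The point needing care — and where I would spend the work — is this last, purely triangulated step, since mapping cones are defined only up to non-canonical isomorphism. To make it airtight I would proceed as in the proof of \eqref{prop:DB-defect-for-unions} and run the (derived) $9$-lemma on the morphism from the first triangle above to the second induced by the vertical restrictions $\Om^\times_X\to\Om^\times_Z$ and $\Om^\times_Y\to\Om^\times_W$. This produces an exact triangle
$$
\mathrm{Cone}\!\left(\Om^\times_{X,Y}\to\Om^\times_{Z,W}\right)\longrightarrow \mathrm{Cone}\!\left(\Om^\times_X\to\Om^\times_Z\right)\longrightarrow \mathrm{Cone}\!\left(\Om^\times_Y\to\Om^\times_W\right)\overset{+1}{\longrightarrow},
$$
and the Mayer--Vietoris triangle of \eqref{prop:DB-defect-for-unions} says exactly that its middle arrow is a quasi-isomorphism; hence the left-hand cone is acyclic and $\Om^\times_{X,Y}\qis\Om^\times_{Z,W}$. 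The one further thing to verify along the way is that the intersection term $\Om^\times_W$ supplied by \eqref{prop:DB-defect-for-unions} is the one attached to $W=(Y\cap Z)_{\red}$, which is the point at which the reducedness hypotheses on $Y$, $Z$ and their intersection are used.
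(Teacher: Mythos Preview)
Your argument is correct and is essentially the paper's own proof: both set up the same morphism of exact triangles $\Om^\times_{X,Y}\to\Om^\times_X\to\Om^\times_Y$ over $\Om^\times_{Z,W}\to\Om^\times_Z\to\Om^\times_W$ and then invoke a triangulated-category comparison lemma together with the Mayer--Vietoris triangle \eqref{prop:DB-defect-for-unions}. The only cosmetic difference is that the paper packages the $3\times 3$/homotopy-cartesian step by citing \cite[2.1]{KK10} (which produces an auxiliary object $\mathsf Q$ and reduces $\alpha$ being an isomorphism to the Mayer--Vietoris identification $\Om^\times_X\qis\mathsf Q$), whereas you spell it out via the $9$-lemma; these are the same argument.
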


\begin{proof}
  Consider the following diagram of exact triangles,
  \begin{equation*}
    \xymatrix{%
      \text{\phantom{mn}}
      \Om^\times_{X,Y} \ar[d]^\alpha \ar[r] & \Om^\times_X \ar[d]^\beta \ar[r] &
      \Om^\times_Y \ar[d]^\gamma \ar[r]^-{+1} & \\ 
      \text{\phantom{mn}}\Om^\times_{Z,W}  \ar[r] & \Om^\times_Z  \ar[r] &
      \Om^\times_W  \ar[r]^-{+1} &, \\  
    }
  \end{equation*}
  where $\beta$ and $\gamma$ are the natural restriction morphisms and $\alpha$ is
  the morphism induced by $\beta$ and $\gamma$ on the mapping cones. Then by
  \cite[2.1]{KK10} there exists an exact triangle 
  \begin{equation*}
    \xymatrix{%
      \sfQ \ar[r] & \Om^\times_{Y} \oplus \Om^\times_{Z}
      \ar[r]  &  \Om^\times_W \ar[r]^-{+1} & .
    }
  \end{equation*}
  and a map $\sigma:\Om^\times_X\to \sfQ$ compatible with the above diagram such that
  $\alpha$ is an isomorphism if and only if $\sigma$ is one.  On the other hand,
  $\sigma$ is indeed an isomorphism by \eqref{prop:DB-defect-for-unions} and so the
  statement follows.
\end{proof}

\section{Cohomology with compact support}\label{sec:cohom-with-comp}

Let $X$ be a complex scheme of finite type and $\iota:\Sigma\into X$ a closed
subscheme.  Deligne's Hodge theory applied in this situation gives the following
theorem:

\begin{thm}\cite{MR0498552}%
  \label{thm:hodge} 
  Let $X$ be a complex scheme of finite type, $\iota:\Sigma\into X$ a closed
  subscheme and $j:U\leteq X\setminus \Sigma\into X$.  Then
  \begin{enumerate}
  \item The natural composition map $j_{!}\bC_{U}\to \sI_{\Sigma\subseteq X} \to
    \Om^\kdot_{X,\Sigma}$ is a quasi-isomorphism, i.e., $\Om^\kdot_{X,\Sigma}$ is a
    resolution of the sheaf $j_{!}\bC_{U}$.
  \item The natural map $H_{\rm c}^\kdot(U,\bC)\to \bH^\kdot(X, \Om^\kdot_{X,\Sigma})$ is
    an isomorphism.
  \item If in addition $X$ is proper, then the spectral sequence,
    $$
    E\uj{_1}^{p,q}= \bH^q(X, \Om^p_{X,\Sigma}) \Rightarrow H_{\rm c}^{p+q}(U,\bC)
    $$
    degenerates at $E\uj{_1}$ and abuts to the Hodge filtration of Deligne's mixed Hodge
    structure. \label{item:9}
  \end{enumerate}
\end{thm}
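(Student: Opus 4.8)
Since the theorem is Deligne's \cite{MR0498552}, the plan is to recover (1) and (2) from the formal properties of the Deligne--Du~Bois complex recorded in Theorem~\ref{defDB}, and to isolate the genuinely Hodge-theoretic content in (3). The whole argument rests on (1), so I would begin there, by comparing two exact triangles. On the one hand, the extension-by-zero sequence of constant sheaves on $X$,
$$
\xymatrix{
  0 \ar[r] & j_!\bC_U \ar[r] & \bC_X \ar[r] & \iota_*\bC_\Sigma \ar[r] & 0,
}
$$
is an exact triangle. On the other hand, the defining triangle \eqref{eq:11} of $\Om^\kdot_{X,\Sigma}$ sits under the natural augmentations $\bC_X\to\Om^\kdot_X$ and $\bC_\Sigma\to\Om^\kdot_\Sigma$, which are quasi-isomorphisms by the first property of Theorem~\ref{defDB} and are compatible with the restriction morphisms by functoriality (\ref{defDB}.\ref{functorial}).

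These augmentations assemble into a morphism from the constant-sheaf triangle to \eqref{eq:11} whose two rightmost vertical maps are quasi-isomorphisms; hence so is the induced map on the leftmost terms, $j_!\bC_U\to\Om^\kdot_{X,\Sigma}$. Tracing the construction back through diagram (\ref{eq:12}) identifies this quasi-isomorphism with the natural composition $j_!\bC_U\to\sI_{\Sigma\subseteq X}\to\Om^\kdot_{X,\Sigma}$ of the statement, which proves (1). Statement (2) is then formal: for the open immersion $j:U\into X$ one has the standard identification of compactly supported cohomology $H_{\rm c}^\kdot(U,\bC)\simeq\bH^\kdot(X, j_!\bC_U)$, and feeding in the quasi-isomorphism of (1) yields $H_{\rm c}^\kdot(U,\bC)\simeq\bH^\kdot(X,\Om^\kdot_{X,\Sigma})$; one checks that the resulting isomorphism is the natural map in the statement using the same compatibilities as in (1).

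For (3), assume $X$ (and hence the closed subscheme $\Sigma$) proper. The spectral sequence in question is the Hodge--de~Rham spectral sequence of the filtered complex $\Om^\kdot_{X,\Sigma}$, which by construction is the filtered mapping cone of $\Om^\kdot_X\to\Om^\kdot_\Sigma$ in $D_{\rm filt}(X)$, with associated graded pieces the $\Om^p_{X,\Sigma}$ fitting into the triangles (\ref{eq:4}). By (\ref{defDB}.\ref{item:Hodge}) the analogous spectral sequences for $X$ and for $\Sigma$ degenerate at $E_1$ and compute the Hodge filtrations of the mixed Hodge structures on $H^\kdot(X,\bC)$ and $H^\kdot(\Sigma,\bC)$. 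Since the restriction $H^\kdot(X)\to H^\kdot(\Sigma)$ is a morphism of mixed Hodge structures, it is strict for the Hodge filtration, so the long exact cohomology sequence of the cone breaks up into short exact sequences compatibly with $F$; this forces the $E_1$-degeneration for $\Om^\kdot_{X,\Sigma}$ and identifies the limit filtration with the Hodge filtration of Deligne's mixed Hodge structure on $H_{\rm c}^\kdot(U,\bC)$, which is exactly the construction of \cite{MR0498552}.

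The main obstacle is concentrated in (3). Once (1) is established, (2) is bookkeeping and the comparison of triangles in (1) is formal; but the $E_1$-degeneration for the relative complex, and especially the claim that the filtration it induces on $H_{\rm c}^\kdot(U)$ \emph{is} Deligne's Hodge filtration, cannot be extracted from the properties listed in Theorem~\ref{defDB} alone. They require the strictness of morphisms of mixed Hodge structures and the full weight of Deligne's construction, which is why the theorem is quoted rather than derived from the formal properties of $\Om^\kdot_{X,\Sigma}$.
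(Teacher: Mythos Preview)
Your argument is correct and takes a genuinely different route from the paper. The paper's proof chooses an embedded hyperresolution $(X_\kdot,\Sigma_\kdot)\to(X,\Sigma)$, identifies $\Om^\kdot_{X,\Sigma}\qis\myR{\varepsilon_\kdot}_*\Omega^\kdot_{X_\kdot,\Sigma_\kdot}$ via (\ref{defDB}.\ref{item:1}), and then simply invokes Deligne's original results \cite[8.1, 8.2, 9.3]{MR0498552} for smooth simplicial schemes (with a pointer to \cite[IV.4]{GNPP88}). You instead bypass the hyperresolution entirely for (1) and (2), deriving them from the properties already packaged in Theorem~\ref{defDB}: the comparison of the extension-by-zero triangle with the defining triangle \eqref{eq:11} via the quasi-isomorphisms $\bC_X\qis\Om^\kdot_X$ and $\bC_\Sigma\qis\Om^\kdot_\Sigma$ is clean and makes (1) truly formal. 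For (3) you give an actual argument---absolute degeneration for $X$ and $\Sigma$ from (\ref{defDB}.\ref{item:Hodge}) plus exactness of $Gr^p_F$ on mixed Hodge structures---rather than a citation; this is a standard strictness/dimension-count argument and works. The paper's approach has the virtue of pointing directly to the primary source and making transparent that the theorem \emph{is} Deligne's result in another guise; your approach has the virtue of being self-contained modulo Theorem~\ref{defDB} and of isolating precisely where the Hodge-theoretic input enters. One small caveat: your identification $H^\kdot_c(U,\bC)\simeq\bH^\kdot(X,j_!\bC_U)$ in (2) is only the ``standard'' one when $X$ is proper; for arbitrary $X$ the right-hand side is relative cohomology $H^\kdot(X,\Sigma;\bC)$, so either (2) tacitly assumes properness or the map in the statement is the comparison map through a compactification---worth flagging, though it does not affect the application in Corollary~\ref{cor:surjectivity}, which assumes $X$ proper anyway.
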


\begin{proof}
  Consider an embedded hyperresolution of $\Sigma\subseteq X$:
  $$
  \xymatrix{%
    \text{\phantom{$\kdot$}}\Sigma_\kdot \hskip-.5ex\ar[r]^-{\varrho_\kdot}
    \ar[d]_{\varepsilon_\kdot\hskip-.5ex}  & X_\kdot  \ar[d]^{\varepsilon_\kdot}
    \hskip-1ex \\ 
    \Sigma \ar[r]_\varrho & X
  }
  $$
  Then by (\ref{defDB}.\ref{item:1}) and by definition $\Om_{X,\Sigma}^\kdot \qis
  \myR{\varepsilon_\kdot}_* \Omega^\kdot_{X_\kdot, \Sigma_\kdot}$.  The statements
  then follow from \cite[8.1, 8.2, 9.3]{MR0498552}. See also \cite[IV.4]{GNPP88}.
\end{proof}

\begin{cor}\label{cor:surjectivity}
  Let $X$ be a proper complex scheme of finite type, $\iota:\Sigma\into X$ a closed
  subscheme and $j:U\leteq X\setminus \Sigma\into X$.  Then the natural map
  $$
  H^i(X, \sI_{\Sigma\subseteq X}) \to  \bH^i(X, \Om_{X,\Sigma}^0)
  $$
  is surjective for all $i\in \bN$.
\end{cor}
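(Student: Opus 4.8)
The plan is to obtain the surjectivity as a formal consequence of the $E_1$-degeneration in Theorem~\ref{thm:hodge}, in exact parallel with the way surjectivity of $H^i(X,\bC)\to \bH^i(X,\Om^0_X)$ in the absolute case is extracted from the degeneration of the spectral sequence in (\ref{defDB}.\ref{item:Hodge}). Concretely, I would realize the map of the statement as one factor of the edge homomorphism of the spectral sequence of the filtered complex $\Om^\kdot_{X,\Sigma}$.

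First I would record the identifications furnished by Theorem~\ref{thm:hodge}: its second part gives $H_{\rm c}^i(U,\bC)\cong \bH^i(X,\Om^\kdot_{X,\Sigma})$, and its first part exhibits this isomorphism through the natural maps $j_!\bC_U\to \sI_{\Sigma\subseteq X}\to \Om^\kdot_{X,\Sigma}$. The projection of the filtered complex onto its lowest graded piece, $\Om^\kdot_{X,\Sigma}\to Gr^0_{\rm filt}\Om^\kdot_{X,\Sigma}=\Om^0_{X,\Sigma}$, induces in hypercohomology the edge homomorphism $H_{\rm c}^i(U,\bC)\to E_1^{0,i}=\bH^i(X,\Om^0_{X,\Sigma})$ of the spectral sequence in the third part of Theorem~\ref{thm:hodge}. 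Because that spectral sequence has $E_1^{p,q}=0$ for $p<0$, the term $E_\infty^{0,i}$ is a quotient of the abutment $H_{\rm c}^i(U,\bC)$; and because it degenerates at $E_1$, one has $E_\infty^{0,i}=E_1^{0,i}$. Hence the edge map $H_{\rm c}^i(U,\bC)\to \bH^i(X,\Om^0_{X,\Sigma})$ is surjective.

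It then remains to see that this surjection factors through $H^i(X,\sI_{\Sigma\subseteq X})$. The natural map $j_!\bC_U\to \sI_{\Sigma\subseteq X}$ induces $H_{\rm c}^i(U,\bC)\to H^i(X,\sI_{\Sigma\subseteq X})$, and the leftmost vertical arrow of (\ref{eq:12}) induces $H^i(X,\sI_{\Sigma\subseteq X})\to \bH^i(X,\Om^0_{X,\Sigma})$, which is precisely the map in the statement. Granting that the edge projection agrees, under $j_!\bC_U\qis\Om^\kdot_{X,\Sigma}$, with the composite $j_!\bC_U\to \sI_{\Sigma\subseteq X}\to \Om^0_{X,\Sigma}$, the surjection above factors as
\[
  H_{\rm c}^i(U,\bC)\longrightarrow H^i(X,\sI_{\Sigma\subseteq X})\longrightarrow \bH^i(X,\Om^0_{X,\Sigma}),
\]
and a surjective composite forces the second arrow to be surjective, which is the claim.

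The crux, and the only non-formal point, is the compatibility invoked in the last paragraph: that passing from $\Om^\kdot_{X,\Sigma}$ to $\Om^0_{X,\Sigma}$ really does factor through $\sI_{\Sigma\subseteq X}$ via the map of (\ref{eq:12}). I would verify it by reducing to the absolute statements that for each of $X$ and $\Sigma$ the composite $\bC\to \sO\to \Om^0$ coincides with $\bC\qis \Om^\kdot\to Gr^0_{\rm filt}\Om^\kdot$, which is built into the construction of the map $\sO\to\Om^0$ in (\ref{defDB}.\ref{item:dR-to-DB}), and then taking the cones of the three restriction morphisms $\bC_X\to \bC_\Sigma$, $\sO_X\to \sO_\Sigma$, and $\Om^0_X\to \Om^0_\Sigma$. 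The morphism induced on cones yields exactly the commuting square linking $j_!\bC_U\to \sI_{\Sigma\subseteq X}$, the projection to $\Om^0_{X,\Sigma}$, and the vertical arrow of (\ref{eq:12}); once this square commutes the factorization above is immediate.
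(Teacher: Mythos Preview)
Your argument is correct and follows exactly the same route as the paper: use the $E_1$-degeneration of Theorem~\ref{thm:hodge} to see that the composite $H^i_{\rm c}(U,\bC)\to H^i(X,\sI_{\Sigma\subseteq X})\to \bH^i(X,\Om^0_{X,\Sigma})$ is surjective, and conclude. The paper records only this one line, taking the factorization through $\sI_{\Sigma\subseteq X}$ as part of the word ``natural''; your added verification of that compatibility via cones is fine but not required at this level.
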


\begin{proof}
  By (\ref{thm:hodge}.\ref{item:9}) the natural composition map
  $$
  H^i_{\rm c}(U, \bC) \to H^i(X, \sI_{\Sigma\subseteq X}) \to \bH^i(X,
  \Om_{X,\Sigma}^0)
  $$
  is surjective. This clearly implies the statement.
\end{proof}

\section{DB pairs in nature}\label{sec:db-pairs}

\begin{prop}
  \label{prop:pair-of-DBs-is-DB}
  Let $(X,\Sigma)$ be a reduced generalized pair. 
  If either $X$ or $\Sigma$ is DB, then the other
  one is DB if and only if $(X,\Sigma)$ is a DB pair.
\end{prop}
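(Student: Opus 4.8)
The plan is to deduce the entire statement from the single exact triangle \eqref{eq:6}, which has been arranged precisely to package the three relevant defects together. First I would translate the three ``DB'' hypotheses into acyclicity statements via the dictionary already established: by \eqref{lem:db-defect} the variety $X$ is DB if and only if $\Om_X^\times\qis 0$, and likewise $\Sigma$ is DB if and only if $\Om_\Sigma^\times\qis 0$; on the other hand, by the equivalence of (\ref{lem:db-defect-rel}.\ref{item:3}) and (\ref{lem:db-defect-rel}.\ref{item:6}), the pair $(X,\Sigma)$ is a DB pair if and only if $\Om_{X,\Sigma}^\times\qis 0$. Thus the proposition becomes purely a statement about which of the three objects $\Om_{X,\Sigma}^\times$, $\Om_X^\times$, $\Om_\Sigma^\times$ are acyclic.

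Next I would invoke the exact triangle \eqref{eq:6},
$$
\xymatrix{
  \Om^\times_{X,\Sigma} \ar[r] & \Om^\times_{X} \ar[r] & \Om^\times_{\Sigma}
  \ar[r]^-{+1} & ,
}
$$
together with the elementary fact that in any exact triangle $\sfA\to\sfB\to\sfC\xrightarrow{+1}$ in a triangulated category, if one of the three vertices is acyclic then the morphism between the remaining two is a quasi-isomorphism; equivalently, those two vertices are acyclic or non-acyclic simultaneously. This is immediate from the associated long exact sequence of cohomology sheaves.

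The two halves of the ``either\dots the other'' formulation then fall out directly. If $X$ is DB, then $\Om_X^\times\qis 0$ is the middle vertex of \eqref{eq:6}, so the triangle collapses to a quasi-isomorphism $\Om_{X,\Sigma}^\times\qis \Om_\Sigma^\times[-1]$; since a shift does not affect acyclicity, $\Om_{X,\Sigma}^\times$ is acyclic if and only if $\Om_\Sigma^\times$ is, that is, $(X,\Sigma)$ is a DB pair if and only if $\Sigma$ is DB. Symmetrically, if $\Sigma$ is DB, then $\Om_\Sigma^\times\qis 0$ is the last vertex, so the first map of \eqref{eq:6} is a quasi-isomorphism $\Om_{X,\Sigma}^\times\qis \Om_X^\times$; hence $(X,\Sigma)$ is a DB pair if and only if $X$ is DB.

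There is essentially no obstacle here: all the content resides in having constructed the DB defect of a pair and the triangle \eqref{eq:6}, after which the argument is a formal consequence of the triangulated structure. The only point deserving a word of care is quoting \eqref{eq:6} with the correct roles of $X$ and $\Sigma$ (so that $\Om_X^\times$ sits in the middle and $\Om_\Sigma^\times$ at the end), since this is what dictates which of the two implications one reads off in each case; no computation with the individual cohomology sheaves $h^i(\Om_X^0)$ or $h^i(\Om_\Sigma^0)$ is required.
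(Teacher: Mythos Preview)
Your proof is correct and follows essentially the same approach as the paper: invoke the exact triangle \eqref{eq:6}, observe that if one vertex is acyclic then the remaining two are acyclic simultaneously, and translate via \eqref{lem:db-defect} and \eqref{lem:db-defect-rel}. The only difference is that you spell out the two cases separately, whereas the paper states the triangle argument in one sentence.
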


\begin{proof}
  Consider the exact triangle (\ref{eq:6})
  $$
  \xymatrix{%
    \Om^\times_{X,\Sigma} \ar[r] & \Om^\times_{X} \ar[r] & \Om^\times_{\Sigma}
    \ar[r]^-{+1} & .  
  }
  $$
  Clearly, if one of the objects in this triangle is acyclic, then it is equivalent
  that the other two are acyclic. Then the statement follows by \eqref{lem:db-defect}
  and \eqref{lem:db-defect-rel}.
\end{proof}

As one expects it from a good notion of singularity, smooth points are DB. For pairs,
being smooth is replaced by being snc.

\begin{cor}
  \label{cor:snc-is-db}
  Let $(X,\Delta)$ be an snc pair. Then it is also a DB pair.
\end{cor}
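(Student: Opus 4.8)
The plan is to deduce the statement directly from Proposition~\ref{prop:pair-of-DBs-is-DB}, once I check that each of the two members of the pair, taken separately, has \uj{Du~Bois} singularities. By definition an snc pair $(X,\pairD)$ has $X$ smooth and $\pairD$ a reduced simple normal crossing divisor; in particular $\pairD$ is reduced, so $(X,\pairD)$ is a reduced generalized pair and Proposition~\ref{prop:pair-of-DBs-is-DB} is applicable.

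First I would record that $X$ is smooth and hence DB: for a smooth variety the \uj{Deligne-Du~Bois} complex agrees with the de~Rham complex, so the natural map $\sO_X\to\Om^0_X$ is a quasi-isomorphism. Next, since $\pairD$ is a reduced snc divisor sitting inside the smooth variety $X$, it is a normal crossing divisor, and by (\ref{defDB}.\ref{item:8}) one has $\Om^\kdot_\pairD\qis\Omega^\kdot_\pairD$, and in particular $\Om^0_\pairD\qis\Omega^0_\pairD=\sO_\pairD$. Thus the natural map $\sO_\pairD\to\Om^0_\pairD$ is a quasi-isomorphism and $\pairD$ is DB as well; this is precisely Deligne's observation that normal crossing singularities are DB.

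With both $X$ and $\pairD$ now known to be DB, I would conclude by invoking Proposition~\ref{prop:pair-of-DBs-is-DB}: taking $X$ as the member assumed DB and using that $\pairD$ is also DB, the ``if and only if'' gives at once that $(X,\pairD)$ is a DB pair. There is no genuine obstacle here; the only substantive input is the DB-ness of a normal crossing divisor supplied by (\ref{defDB}.\ref{item:8}), and everything else is a direct application of the already-established equivalence in Proposition~\ref{prop:pair-of-DBs-is-DB}.
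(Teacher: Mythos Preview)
Your proof is correct and follows essentially the same approach as the paper: the paper's one-line proof simply cites Proposition~\ref{prop:pair-of-DBs-is-DB} together with (\ref{defDB}.\ref{item:8}), which is exactly the argument you have spelled out in detail. The paper also notes that the result alternatively follows from Corollary~\ref{cor:lc-is-DB}, since snc pairs are log canonical.
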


\begin{proof}
  This follows directly from \eqref{prop:pair-of-DBs-is-DB} cf.\
  (\ref{defDB}.\ref{item:8}) \cite[3.2]{Steenbrink85}. It also follows from
  \eqref{cor:lc-is-DB}.
\end{proof}

\begin{cor}\label{cor:lc-is-DB}
  Let $(X,\pairD)$ be a log canonical pair and $\Lambda\subset X$ an effective
  integral Weil divisor such that $\supp\Lambda\subseteq \supp\rdown\pairD$. Then
  $(X,\Lambda)$ is a DB pair.
\end{cor}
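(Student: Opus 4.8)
The plan is to reduce everything to \eqref{prop:pair-of-DBs-is-DB}. Since $(X,\Lambda)$, with $\Lambda$ taken with its reduced structure, is a reduced generalized pair, that proposition says: if one of $X$, $\Lambda$ is DB, then the other is DB if and only if $(X,\Lambda)$ is a DB pair. Hence it suffices to prove the two absolute statements that $X$ is DB and that $\Lambda$ is DB; the conclusion is then immediate.

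First I would argue that $X$ is DB. The only input is that $(X,\pairD)$ is log canonical, and by the theorem that log canonical singularities are \uj{Du~Bois} \cite{KK10} the underlying variety $X$ has DB singularities (the existence of the lc boundary $\pairD$ is all that is used). Next I would show that $\Lambda$ is DB. Because $(X,\pairD)$ is lc, every coefficient of $\pairD$ lies in $[0,1]$, so $\rdown{\pairD}$ is exactly the sum of the prime components of $\pairD$ occurring with coefficient $1$, and each such component is a log canonical center of $(X,\pairD)$. As $\Lambda$ is effective with $\supp\Lambda\subseteq\supp\rdown{\pairD}$ and has pure codimension one, its reduction is a finite union of some of these components, i.e.\ a union of lc centers of $(X,\pairD)$. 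By the fact that any union of lc centers of a log canonical pair is DB \cite{KK10}, the scheme $\Lambda$ is DB. Applying \eqref{prop:pair-of-DBs-is-DB} with $X$ (or $\Lambda$) in the role of the DB member then gives that $(X,\Lambda)$ is a DB pair.

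The main obstacle is the DB-ness of $\Lambda$: it is in general a reducible and singular divisor, so this is not a formal consequence of $X$ being DB but rests on the nontrivial fact that unions of lc centers of an lc pair are DB. If one preferred to avoid quoting that statement wholesale, it could be reassembled by induction on the number of components of $\Lambda$ using the absolute Mayer--Vietoris triangle \eqref{prop:DB-defect-for-unions}, splitting $\Lambda=A\cup B$ into one component $A$ and the union $B$ of the rest; this reduces matters to the DB-ness of each component and of each intersection stratum $A\cap B$. The latter strata are again unions of lc centers (via adjunction on the dlt/lc structure), so the induction closes — but this merely re-derives the cited result, which is why quoting \cite{KK10} is the cleanest route.
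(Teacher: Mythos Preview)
Your proof is correct and follows essentially the same route as the paper: you observe that $\Lambda$ is a union of non-klt centers of $(X,\pairD)$, invoke \cite[Theorem~1.4]{KK10} to conclude that both $X$ and $\Lambda$ are DB, and then apply \eqref{prop:pair-of-DBs-is-DB}. The paper's proof is exactly this, stated in two sentences; your additional second paragraph sketching an inductive Mayer--Vietoris alternative is unnecessary here but not wrong.
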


\begin{proof}
  By choice $\Lambda$ is a union of non-klt centers of the pair $(X,\pairD)$ and
  hence by \cite[Theorem~1.4]{KK10} both $X$ and $\Lambda$ are DB.  Then
  $(X,\Lambda)$ is a DB pair by \eqref{prop:pair-of-DBs-is-DB}.
\end{proof}

\begin{thm}
  \label{thm:wdb-is-db}
  Let $(X,\Sigma)$ be a reduced generalized pair. Assume that the natural morphism
  $\sI_{\Sigma\subseteq X}\to\Om_{X,\Sigma}^0$ has a left inverse. Then $(X,\Sigma)$
  is a DB pair.
\end{thm}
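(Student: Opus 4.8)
The plan is to reduce everything to showing that the DB defect $\Om_{X,\Sigma}^\times$ is acyclic, since by \eqref{lem:db-defect-rel} this is precisely what it means for $(X,\Sigma)$ to be a DB pair. The existence of a left inverse $g$ of the natural map $f\colon\sI_{\Sigma\subseteq X}\to\Om_{X,\Sigma}^0$ says exactly that the exact triangle \eqref{eq:5} splits, so that $\Om_{X,\Sigma}^0\qis \sI_{\Sigma\subseteq X}\oplus\Om_{X,\Sigma}^\times$; in particular $\Om_{X,\Sigma}^\times$ is a direct summand of $\Om_{X,\Sigma}^0$. By \eqref{prop:top-coh-vanishes-rel} together with \eqref{eq:8} its cohomology sheaves vanish for $i\geq\dim X$, and they vanish for $i<0$ since the complex is concentrated in non-negative degrees. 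The whole content of the theorem is therefore that $h^i(\Om_{X,\Sigma}^\times)=0$ also in the remaining range $0\le i<\dim X$.

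First I would cut down the support of these cohomology sheaves by induction on $n=\dim X$. As being a DB pair is a local condition, one may assume $X$ is quasi-projective; let $H\subset X$ be a general member of a very ample linear system. The key observation is that the hypothesis descends to $(H,H\cap\Sigma)$: for general $H$ the sheaf $\sI_{\Sigma\subseteq X}$ has no associated points on $H$, so $\sI_{\Sigma\subseteq X}\otimes_L\sO_H\qis\sI_{H\cap\Sigma\subseteq H}$, and by the graded refinement of \eqref{prop:db-cx-of-hyper-rel} tensoring $f$ with $\sO_H$ yields the natural map $\sI_{H\cap\Sigma\subseteq H}\to\Om_{H,H\cap\Sigma}^0$, which then has left inverse $g\otimes_L\sO_H$. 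By the inductive hypothesis $(H,H\cap\Sigma)$ is a DB pair, i.e.\ $\Om_{H,H\cap\Sigma}^\times\qis 0$, and so by \eqref{prop:db-cx-of-hyper-rel} we obtain $\Om_{X,\Sigma}^\times\otimes_L\sO_H\qis 0$. Feeding the short exact sequence $0\to\sO_X(-H)\to\sO_X\to\sO_H\to 0$ tensored with $\Om_{X,\Sigma}^\times$ into the long exact sequence of cohomology sheaves shows that multiplication by the defining section of $H$ is an isomorphism on each $h^i(\Om_{X,\Sigma}^\times)$, whence $h^i(\Om_{X,\Sigma}^\times)\big|_H=0$. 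Since $H$ is a general very ample divisor, this forces every $h^i(\Om_{X,\Sigma}^\times)$ to be supported in dimension zero.

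It then remains to annihilate this finite support, and this is where I expect the main obstacle to lie, since it is exactly here that one must leave the local setting and bring in Deligne's Hodge theory through \eqref{cor:surjectivity}. The idea is that on a proper model the natural map $H^i(X,\sI_{\Sigma\subseteq X})\to\bH^i(X,\Om_{X,\Sigma}^0)$ is surjective by \eqref{cor:surjectivity}, while the splitting of \eqref{eq:5} makes it split injective, hence an isomorphism for every $i$; the long exact sequence of \eqref{eq:5} then yields $\bH^i(X,\Om_{X,\Sigma}^\times)=0$ for all $i$. On the other hand, once the cohomology sheaves of $\Om_{X,\Sigma}^\times$ are concentrated on a finite set they are skyscrapers with no higher cohomology, so the hypercohomology spectral sequence collapses and $\bH^i(X,\Om_{X,\Sigma}^\times)\simeq\bigoplus_x h^i(\Om_{X,\Sigma}^\times)_x$. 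The vanishing of the left-hand side then forces every stalk, and hence every $h^i(\Om_{X,\Sigma}^\times)$, to vanish, giving $\Om_{X,\Sigma}^\times\qis 0$ as desired.

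The genuinely delicate point is the reconciliation in the last paragraph: the surjectivity of \eqref{cor:surjectivity} requires only properness, but the injectivity coming from the splitting is a priori available only over $X$, so one must arrange a proper situation on which both inputs hold simultaneously. Here I would use that the Deligne–Du~Bois complex, and therefore the defect $\Om_{X,\Sigma}^\times$, is local, agreeing with that of a chosen proper model near the finitely many points where acyclicity can fail; the bookkeeping needed to transport the global Hodge-theoretic isomorphism to these local stalks is the only real difficulty, everything else being a formal consequence of the splitting together with the hyperplane-section and vanishing statements already established.
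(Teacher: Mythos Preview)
Your overall architecture is the same as the paper's: reduce via general hyperplane sections (using \eqref{prop:db-cx-of-hyper-rel}) to the case where $\Om_{X,\Sigma}^\times$ is supported on a finite set $P$, then combine the split injectivity coming from the left inverse with the Hodge-theoretic surjectivity of \eqref{cor:surjectivity} to kill the remaining stalks. You have also correctly isolated the only nontrivial point, namely that the surjectivity of \eqref{cor:surjectivity} lives on a proper scheme while the splitting lives on $X$.

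Where you remain vague, the paper is precise, and the mechanism is worth knowing: instead of trying to find a proper model on which both inputs hold, the paper works with \emph{local cohomology with support in $P$}. One takes a projective closure $\overline X$ with boundary $Q=\overline X\setminus X$, sets $Z=P\sqcup Q$, and applies the exact triangle $\bH^\bullet_Z\to\bH^\bullet\to\bH^\bullet\big|_U$ to the map $\sI_{\overline\Sigma\subseteq\overline X}\to\Om^0_{\overline X,\overline\Sigma}$. On $U=X\setminus P$ the map is an isomorphism by the hyperplane induction, and on $\overline X$ it is surjective by \eqref{cor:surjectivity}; the five-lemma then gives surjectivity of $H^i_Z\to\bH^i_Z$, and since $P\cap Q=\emptyset$ this splits off the $P$-part. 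Excision identifies $H^i_P(\overline X,\cdot)\simeq H^i_P(X,\cdot)$, so one obtains surjectivity of $H^i_P(X,\sI_{\Sigma\subseteq X})\to\bH^i_P(X,\Om^0_{X,\Sigma})$. Now the splitting on $X$ gives injectivity of this same map, hence it is an isomorphism, so $\bH^i_P(X,\Om^\times_{X,\Sigma})=0$; combined with $\bH^i(X\setminus P,\Om^\times_{X,\Sigma})=0$ and the degeneration of the hypercohomology spectral sequence over the zero-dimensional $P$ (on the affine $X$), this forces $h^i(\Om^\times_{X,\Sigma})=0$. This is exactly the ``bookkeeping'' you allude to, and local cohomology plus excision is the clean way to carry it out.
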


\begin{proof}
  We will mimic the proof of \cite[1.5]{Kovacs00c}.  The statement is local so we may
  assume that $X$ is affine and hence quasi-projective
  \begin{lem}\label{lem:local-surjectivity}
    Assume that there exists a finite subset $P\subseteq X$ such that $(X\setminus
    P,\Sigma\setminus P)$ is a DB pair. Then the induced morphism
    $$
    H^i_P(X, \sI_{\Sigma\subseteq X}) \to  \bH^i_P(X, \Om_{X,\Sigma}^0)
    $$
    is surjective for all $i\in\bN$.
  \end{lem}
  \begin{proof}
    Let $\ol X$ be a projective closure of $X$ and let $\ol\Sigma$ be the closure of
    $\Sigma$ in $\ol X$. Let $Q=\ol X\setminus X$, $Z=P\disjoint Q$, and $U=\ol
    X\setminus Z=X\setminus P$. Consider the exact triangle of functors,
    \begin{equation}
      \label{eq:13}
      \xymatrix{%
        \bH^0_Z({\ol X},\blank ) \ar[r] & \bH^0({\ol X},\blank ) \ar[r] & \bH^0(U,\blank )
        \ar[r]^-{+1} & 
      }
    \end{equation}
    and apply it to the morphism $\sI_{{\ol \Sigma}\subseteq {\ol X}} \to \Om_{{\ol
        X},{\ol \Sigma}}^0$. One obtains a morphism of two long exact sequences:
    $$
    \hskip-1em\xymatrix{%
      \bH^{i-1}(U,\sI_{{\ol \Sigma}\subseteq {\ol X}} ) \ar[d]^{\alpha_{i-1}}\ar[r] &
      \bH^{i}_Z({\ol X},\sI_{{\ol \Sigma}\subseteq {\ol X}} ) \ar[d]^{\beta_i}\ar[r]
      & \bH^{i}({\ol X},\sI_{{\ol \Sigma}\subseteq {\ol X}} ) \ar[d]^{\gamma_i}\ar[r]
      &
      \bH^{i}(U,\sI_{{\ol \Sigma}\subseteq {\ol X}} ) \ar[d]^{\alpha_i}\\
      \bH^{i-1}(U,\Om_{{\ol X},{\ol \Sigma}}^0 ) \ar[r] & \bH^{i}_Z({\ol X},\Om_{{\ol
          X},{\ol \Sigma}}^0 ) \ar[r] & \bH^{i}({\ol X},\Om_{{\ol X},{\ol \Sigma}}^0
      ) \ar[r] & \bH^{i}(U,\Om_{{\ol X},{\ol \Sigma}}^0 )  .}
    $$
    By assumption, $\alpha_i$ is an isomorphism for all $i$. By
    \eqref{cor:surjectivity}, $\gamma_i$ is surjective for all $i$. Then by the
    5-lemma, $\beta_i$ is also surjective for all $i$.
    
    By construction $P\cap Q=\emptyset$ and hence
    \begin{align*}
      H^i_Z({\ol X}, \sI_{\ol\Sigma\subseteq {\ol X}}) &\simeq H^i_P({\ol X},
      \sI_{\ol\Sigma\subseteq {\ol X}}) \oplus
      H^i_Q({\ol X}, \sI_{\ol\Sigma\subseteq {\ol X}}) \\
      \bH^i_Z({\ol X}, \Om_{{\ol X},\ol\Sigma}^0) &\simeq \bH^i_P({\ol X}, \Om_{{\ol
          X},\ol\Sigma}^0) \oplus \bH^i_Q({\ol X}, \Om_{{\ol X},\ol\Sigma}^0)
    \end{align*}
    It follows that the natural map (which is also the restriction of $\beta_i$), 
    $$
    H^i_P(\ol X, \sI_{\ol\Sigma\subseteq \ol X}) \to  \bH^i_P(\ol X, \Om_{\ol
      X,\ol\Sigma}^0) 
    $$
    is surjective for all $i$

    Now, by excision on local cohomology one has that
    $$
    H^i_P(\ol X, \sI_{\ol\Sigma\subseteq \ol X}) \simeq H^i_P(X, \sI_{\Sigma\subseteq X})
    \quad\text{and}\quad 
    \bH^i_P(\ol X, \Om_{\ol X,\ol\Sigma}^0) \simeq \bH^i_P(X, \Om_{X,\Sigma}^0).
    $$
    and so \eqref{lem:local-surjectivity} follows.
  \end{proof}

  It is now relatively straightforward to finish the proof of \ref{thm:wdb-is-db}: 

  By taking repeated hyperplane sections and using \eqref{prop:db-cx-of-hyper-rel} we
  may assume that there exists a finite subset $P\subseteq X$ such that $(X\setminus
  P,\Sigma\setminus P)$ is a DB pair. Therefore we may apply \eqref{lem:local-surjectivity}.

  By assumption, the natural morphism $\sI_{\Sigma\subseteq X}\to\Om_{X,\Sigma}^0$
  has a left inverse. This implies that applying any cohomology operator on this map
  induces an injective map on cohomology. In particular, this implies that the
  natural morphism
  $$
  H^i_P(X, \sI_{\Sigma\subseteq X}) \to  \bH^i_P(X, \Om_{X,\Sigma}^0)
  $$
  is injective for all $i\in\bN$. By \eqref{lem:local-surjectivity} they are also
  surjective and hence an isomorphism. Thus the DB defect $\Om_{X,\Sigma}^\times$ is
  such that all of its local cohomology groups are zero:
  $$
  \bH^i_P(X,\Om_{X,\Sigma}^\times)=0\quad\text{for all $i$.}
  $$
  On the other hand, by assumption $\Om_{X,\Sigma}^\times$ is supported entirely on
  $P$, so $\bH^i(X\setminus P, \Om_{X,\Sigma}^\times)=0$ as well. However, then
  $\bH^i(X, \Om_{X,\Sigma}^\times)=0$ by the long exact sequence induced by
  (\ref{eq:13}). Now $\dim P\leq 0$ so the spectral sequence that computes
  hypercohomology from the sheaf cohomology of the cohomology of the complex
  $\Om_{X,\Sigma}^\times$ degenerates and gives that for any $i\in \bN$, $\bH^i(X,
  \Om_{X,\Sigma}^\times)=H^0(X, h^i(\Om_{X,\Sigma}^\times))$, so, since we assumed
  that $X$ is affine, it follows that $h^i(\Om_{X,\Sigma}^\times)=0$ for all $i$.
  Therefore $\Om_{X,\Sigma}^\times\qis 0$ and thus the statement is proven.
\end{proof}

\begin{cor}
  \label{corr:rtl-is-DB}
  Let $(X,\pairD)$ be a weakly rational pair. Then it is a DB pair.
\end{cor}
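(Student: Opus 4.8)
The plan is to reduce everything to Theorem~\eqref{thm:wdb-is-db}: it suffices to produce a left inverse of the natural morphism $\sI_{\Sigma\subseteq X}\to\Om^0_{X,\Sigma}$, where I take $\Sigma=\pairD$, so that $\sI_{\Sigma\subseteq X}=\sO_X(-\pairD)$. First I would fix a log resolution $\phi:(Y,\pairD_Y)\to(X,\pairD)$ witnessing the weakly rational hypothesis as in \eqref{def:weakly-rtl-sing-pairs}, so that the natural map $a\leteq\bigl(\sO_X(-\pairD)\to\myR\phi_*\sO_Y(-\pairD_Y)\bigr)$ admits a left inverse $a^\ell$ in $D(X)$. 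Since $\phi$ is a log resolution, the pair $(Y,\pairD_Y)$ is snc, hence a DB pair by \eqref{cor:snc-is-db}; that is, $\sI_{\pairD_Y\subseteq Y}\to\Om^0_{Y,\pairD_Y}$ is a quasi-isomorphism. Applying $\myR\phi_*$, which preserves quasi-isomorphisms, shows that the induced map $c\leteq\bigl(\myR\phi_*\sI_{\pairD_Y\subseteq Y}\to\myR\phi_*\Om^0_{Y,\pairD_Y}\bigr)$ is an isomorphism in $D(X)$.

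Next I would invoke the functoriality of the relative \uj{Deligne-Du~Bois} complex, \eqref{prop:rel-funtorial}, applied to $\phi:(Y,\pairD_Y)\to(X,\pairD)$ (note $\phi(\supp\pairD_Y)\subseteq\supp\pairD$, so this is a morphism of generalized pairs). This provides a commutative square
$$
\xymatrix{
  \sI_{\Sigma\subseteq X} \ar[d]_{a}\ar[r]^-{u} & \Om^0_{X,\Sigma}\ar[d]^{b} \\
  \myR\phi_*\sI_{\pairD_Y\subseteq Y} \ar[r]^-{c} & \myR\phi_*\Om^0_{Y,\pairD_Y},
}
$$
in which $u$ is the morphism from \eqref{def:DB-defect}, the left vertical arrow is the map $a$ above, and $c$ is the isomorphism just produced. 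Setting $u^\ell\leteq a^\ell\circ c^{-1}\circ b$, commutativity ($c\circ a=b\circ u$) together with $a^\ell\circ a=\id$ gives
$$
u^\ell\circ u=a^\ell\circ c^{-1}\circ b\circ u=a^\ell\circ c^{-1}\circ c\circ a=a^\ell\circ a=\id,
$$
so $u^\ell$ is a left inverse of $u$. Theorem~\eqref{thm:wdb-is-db} then yields that $(X,\pairD)$ is a DB pair, as claimed.

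The one point requiring genuine care, and the step I expect to be the main obstacle, is the identification of the left vertical arrow of the functoriality square in \eqref{prop:rel-funtorial} with the natural morphism $a:\sO_X(-\pairD)\to\myR\phi_*\sO_Y(-\pairD_Y)$ appearing in the definition of a weakly rational pair. Both are intended to be the canonical maps induced by restriction along $\phi$ (the front face of the cube in the proof of \eqref{prop:rel-funtorial}), but since $\pairD_Y=\phi^{-1}_*\pairD$ is the strict transform rather than a pullback, one must verify that these two natural constructions genuinely coincide. Once this compatibility is checked, the remainder is the purely formal diagram chase in $D(X)$ carried out above.
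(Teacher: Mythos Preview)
Your proof is correct and follows essentially the same route as the paper: invoke \eqref{prop:rel-funtorial} to obtain the commutative square, use \eqref{cor:snc-is-db} to see that the bottom arrow is a quasi-isomorphism, then compose $a^\ell\circ c^{-1}\circ b$ to obtain a left inverse and conclude via \eqref{thm:wdb-is-db}. The paper's notation is $\gamma,\delta,\alpha$ in place of your $a,c,b$, but the argument is identical; the compatibility you flag between the left vertical arrow and the map from \eqref{def:weakly-rtl-sing-pairs} is taken for granted in the paper as well, both being the canonical restriction map on ideal sheaves.
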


\begin{proof}
  Let $\phi:(Y,\pairD_Y)\to (X,\pairD)$ be a log resolution \ujjj{such that $\gamma$
    admits a left inverse $\gamma^\ell$}.  Then by \eqref{prop:rel-funtorial} one has
  the commutative diagram:
  \begin{equation*}
    \xymatrix{%
      \sO_X(-\Delta) \ar[d]^\gamma\ar[r] & \Om^0_{X,\Delta}\ar[d]^\alpha \\
      \myR\phi_*\sO_Y(-\Delta_Y) \ar[r]^-\delta_-\qis \ar@/^/[u]^{\gamma^\ell}
      & \myR\phi_*\Om_{Y,\Delta_Y}^0 &
    }
  \end{equation*}
  Recall that as $(Y,\pairD_Y)$ is an snc pair, it is also DB by
  \eqref{cor:snc-is-db} and hence $\delta$ is a quasi-isomorphism.
  \ujj{By assumption $(Y,\pairD_Y)$ is a weakly rational pair
    so $\gamma$ admits a left inverse $\gamma^\ell$.} Then $\gamma^\ell\circ
  \delta^{-1}\circ\alpha$ is a left inverse to $\sO_X(-\Delta)\to \Om^0_{X,\Delta}$,
  so the statement follows from \eqref{thm:wdb-is-db}.
\end{proof}

\begin{cor}
  \label{cor:rtl-is-DB}
  A rational pair is a DB pair.
\end{cor}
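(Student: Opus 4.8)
The plan is to reduce immediately to the case already handled in \eqref{corr:rtl-is-DB}, by observing that every rational pair is in particular a weakly rational pair. The entire content of the statement is then this inclusion of classes.

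First I would unwind the definitions. By \eqref{def:rat-sing-pairs} a rational pair $(X,\pairD)$ admits a log resolution $\phi:(Y,\pairD_Y)\to (X,\pairD)$ satisfying conditions (i)--(iii). Conditions (i) and (ii) taken together assert precisely that the natural morphism $\sO_X(-\pairD)\to \myR\phi_*\sO_Y(-\pairD_Y)$ induces an isomorphism on every cohomology sheaf: on the $0^{\text{th}}$ one by the normality condition (i), and on the higher ones by the vanishing (ii). In other words, for a rational pair this natural morphism is already a quasi-isomorphism, not merely a map admitting a left inverse.

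Next I would note that a quasi-isomorphism is an isomorphism in the derived category, hence admits a two-sided, and in particular a left, inverse. Thus the very same log resolution witnesses that $(X,\pairD)$ satisfies the defining condition of a weakly rational pair in \eqref{def:weakly-rtl-sing-pairs}. Applying \eqref{corr:rtl-is-DB}, which states that every weakly rational pair is a DB pair, yields the conclusion at once.

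There is no genuine obstacle here; the statement is a formal consequence of \eqref{corr:rtl-is-DB} together with the observation that the rationality conditions are strictly stronger than weak rationality. The only point meriting a word of care is the passage from ``quasi-isomorphism'' to ``map with a left inverse,'' but this is immediate in any derived category and requires no computation.
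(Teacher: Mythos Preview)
Your argument is correct and follows exactly the paper's approach: a rational pair is in particular weakly rational, so the result follows from \eqref{corr:rtl-is-DB}. Your added explanation that conditions (i) and (ii) make $\sO_X(-\pairD)\to \myR\phi_*\sO_Y(-\pairD_Y)$ a quasi-isomorphism (hence invertible, hence left-invertible) simply spells out what the paper leaves implicit.
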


\begin{proof}
  As a rational pair is also a weakly rational pair, this is straighforward from
  \eqref{corr:rtl-is-DB}.
\end{proof}

\begin{cor}
  \label{cor:dlt-is-DB}
  Let $(X,\pairD)$ be a dlt pair and $\Lambda\subset X$ an effective integral Weil
  divisor such that $\supp\Lambda\subseteq \supp\rdown\pairD$. Then $(X,\Lambda)$ is
  a DB pair.
\end{cor}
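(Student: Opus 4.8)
The plan is to recognize $(X,\Lambda)$ as a rational pair and then apply Corollary~\ref{cor:rtl-is-DB}. Note first that a dlt pair is in particular log canonical, and since every component of $\Lambda$ is a component of $\rdown\pairD$, the statement is already subsumed by \eqref{cor:lc-is-DB}. The reason to record it separately is to obtain it entirely within the rational-pairs framework of this paper, i.e.\ without appealing to the deeper input \cite[Theorem~1.4]{KK10} that log canonical singularities are DB. So the route I would take is: dlt pair $\Longrightarrow$ rational pair $\Longrightarrow$ DB pair, where the second implication is exactly \eqref{cor:rtl-is-DB} (which itself rests on \eqref{corr:rtl-is-DB} and \eqref{thm:wdb-is-db}).

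To carry out the first implication I would choose a log resolution $\phi:(Y,\Lambda_Y)\to (X,\Lambda)$ with $\Lambda_Y=\phi^{-1}_*\Lambda$ and verify the three conditions of \eqref{def:rat-sing-pairs}: normality of the pair, $\sO_X(-\Lambda)\simeq \phi_*\sO_Y(-\Lambda_Y)$; the vanishing $\myR^i\phi_*\sO_Y(-\Lambda_Y)=0$ for $i>0$; and the Grauert--Riemenschneider-type vanishing $\myR^i\phi_*\omega_Y(\Lambda_Y)=0$ for $i>0$. For a dlt pair these hold because the underlying $X$ is rational and the reduced boundary is snc in codimension one, so that a reduced divisor $\Lambda$ supported on $\rdown\pairD$ interacts with $X$ in the expected way. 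The clean statement that such a pair is a rational pair is precisely the content of \cite{KollarKovacsRP}, which I would quote rather than re-derive.

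With $(X,\Lambda)$ established as a rational pair, the conclusion is immediate from \eqref{cor:rtl-is-DB}. The main obstacle is thus entirely located in the first implication, namely the verification of the dlt vanishing theorems underlying the rational-pair property; within the present paper this is external input from \cite{KollarKovacsRP}, so the argument here reduces to that citation followed by the application of \eqref{cor:rtl-is-DB}.
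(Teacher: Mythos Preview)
Your proposal is correct, but the paper takes the simpler of the two routes you describe. The paper's entire proof is the one-liner you mention and set aside: a dlt pair is in particular lc, so the statement follows directly from \eqref{cor:lc-is-DB}. That is, the paper is content to rely on the input \cite[Theorem~1.4]{KK10} that you explicitly chose to avoid.

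Your alternative route --- dlt $\Rightarrow$ rational pair via \cite{KollarKovacsRP}, then rational $\Rightarrow$ DB via \eqref{cor:rtl-is-DB} --- is also valid, and in fact the paper's source contains exactly this argument as a suppressed second proof. The trade-off is as you identify: your approach stays within the rational-pairs framework developed in this paper and avoids the deep result that lc singularities are DB, at the cost of importing the dlt-is-rational result from \cite{KollarKovacsRP}. The paper's printed proof is shorter and uses only results already invoked earlier in the section, but leans on \cite{KK10}. Both are legitimate; the paper simply opts for the one-line version.
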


\begin{proof}
  A dlt pair is also an lc pair, so this follows from \eqref{cor:lc-is-DB}.
\end{proof}

\ujj{
\begin{proof}[Proof \#2]
  If $(X,\pairD)$ is a dlt pair, then $(X,\Lambda)$ is a rational pair by
  \cite[111]{KollarKovacsRP}, 
  so this also follows from \eqref{cor:rtl-is-DB}.
\end{proof}
}

\section{Vanishing Theorems}\label{sec:vanishing-theorems}

\noindent
The folowing is the main vanishing result of this paper.  Note that a weaker version
of it appeared in \cite[13.4]{GKKP10}. 

\begin{thm}\label{thm:vanishing}
  Let $(X, \Sigma)$ be a DB pair and $\pi : \widetilde X \to X$ a proper birational
  morphism with $E := \exc(\pi)$. Let $\wt\Sigma = E \cup \pi^{-1}\Sigma$ and
  $\Upsilon := \ol{\pi(E) \setminus \Sigma}$, both considered with their induced
  reduced subscheme structure.  Further let $s\in\bN$, $s>0$ such that
  $h^i(\Om_{\Upsilon,\Upsilon\cap\Sigma}^\circ)=0$ for $i\geq s$.  Then
  $$ 
  \myR^{i}\pi_* \Om_{\wt X,\wt\Sigma}^0 = 0 \text{\qquad for all \, $i \geq s$.}
  $$
\end{thm}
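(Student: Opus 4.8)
The plan is to reduce the computation of $\myR^i\pi_*\Om^0_{\wt X,\wt\Sigma}$ to the relative Deligne--Du~Bois complex of the pair $(X,\Theta)$ downstairs, where $\Theta:=(\Upsilon\cup\Sigma)_{\red}=\pi(\wt\Sigma)$, and then to compare $(X,\Theta)$ with the given DB pair $(X,\Sigma)$. First I would record the geometric input that makes everything fit together: since $E=\exc(\pi)$ is closed and $\pi$ is proper, $\pi(E)$ is closed, $\pi$ is an isomorphism over $X\setminus\Theta$, and one checks $\Upsilon\setminus\Sigma=\pi(E)\setminus\Sigma$; using connectedness of the fibres of $\pi$ over $\pi(E)$ this yields the key identification $\wt\Sigma=\pi^{-1}(\Theta)_{\red}$. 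This last point is exactly what lets the descent triangle (\ref{defDB}.\ref{item:exact-triangle}) be applied with bad locus $\Theta$ and reduced preimage $\wt\Sigma$; I expect verifying it in the stated generality (and, if necessary, reducing to it by cutting with general hyperplanes via \eqref{prop:db-cx-of-hyper-rel}) to be the main technical obstacle.

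The first main step is the relative descent isomorphism
\[
\Om^0_{X,\Theta}\qis \myR\pi_*\Om^0_{\wt X,\wt\Sigma}.
\]
To prove it I would apply the absolute descent triangle (\ref{defDB}.\ref{item:exact-triangle}) to $\pi$ with the role of $\Sigma$ there played by $\Theta$ and that of $E$ played by $\wt\Sigma=\pi^{-1}(\Theta)_{\red}$; this says precisely that the square with vertices $\Om^0_X$, $\myR\pi_*\Om^0_{\wt X}$, $\Om^0_\Theta$, $\myR\pi_*\Om^0_{\wt\Sigma}$ is homotopy cartesian, so the cones of its two horizontal arrows agree. On the other hand the functorial map of \eqref{prop:rel-funtorial}, applied to the morphism of pairs $\pi\colon(\wt X,\wt\Sigma)\to(X,\Theta)$, gives a map between the defining triangles (\ref{eq:4}) of $\Om^0_{X,\Theta}$ and of $\myR\pi_*\Om^0_{\wt X,\wt\Sigma}$. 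Feeding the homotopy-cartesian statement into the derived $9$-lemma — exactly as in the proof of \eqref{prop:DB-defect-for-unions} — shows that the induced map on the third (relative) terms is a quasi-isomorphism, which is the displayed identity.

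Next I would compare $\Theta$ with $\Sigma$. Applying the octahedral axiom to the restriction maps $\Om^0_X\to\Om^0_\Theta\to\Om^0_\Sigma$ produces an exact triangle
\[
\Om^0_{X,\Theta}\longrightarrow \Om^0_{X,\Sigma}\longrightarrow \Om^0_{\Theta,\Sigma}\stackrel{+1}{\longrightarrow},
\]
and since $\Theta=(\Sigma\cup\Upsilon)_{\red}$ with $\Sigma\cap\Upsilon=\Upsilon\cap\Sigma$, the Mayer--Vietoris triangle (\ref{defDB}.\ref{item:2}) — in the excision form already used for defects in \eqref{prop:DB-defect-of-a-union} — identifies $\Om^0_{\Theta,\Sigma}\qis\Om^0_{\Upsilon,\Upsilon\cap\Sigma}$. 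Now I invoke the hypothesis that $(X,\Sigma)$ is a DB pair: by \eqref{def:DB-defect} this means $\sI_{\Sigma\subseteq X}\qis\Om^0_{X,\Sigma}$, so $h^i(\Om^0_{X,\Sigma})=0$ for all $i\neq 0$. Combining the descent isomorphism with the above triangle and passing to the long exact sequence of cohomology sheaves, the term $h^i(\Om^0_{X,\Sigma})$ vanishes in the range $i\geq s>0$, while the contribution coming from $(\Upsilon,\Upsilon\cap\Sigma)$ is killed by the standing assumption $h^i(\Om^\circ_{\Upsilon,\Upsilon\cap\Sigma})=0$ for $i\geq s$; hence $\myR^i\pi_*\Om^0_{\wt X,\wt\Sigma}=h^i(\Om^0_{X,\Theta})=0$ for $i\geq s$, as claimed.

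Besides the preimage identification flagged above, the one computational point that requires genuine care is the index bookkeeping in this final long exact sequence: one must keep track of the degree shift with which the relative complex of $(\Upsilon,\Upsilon\cap\Sigma)$ enters relative to $\myR\pi_*\Om^0_{\wt X,\wt\Sigma}$, since it is this shift that determines the precise range $i\geq s$ in which the vanishing holds.
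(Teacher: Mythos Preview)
Your proposal is correct and follows essentially the same strategy as the paper: both arguments hinge on the descent triangle (\ref{defDB}.\ref{item:exact-triangle}) applied with the enlarged locus $\Gamma=\Theta=\Sigma\cup\Upsilon$, together with the Mayer--Vietoris identification $\Om^0_{\Gamma,\Sigma}\qis\Om^0_{\Upsilon,\Upsilon\cap\Sigma}$ (the $\Om^0$-analogue of \eqref{prop:DB-defect-of-a-union}), and then feed in the DB hypothesis on $(X,\Sigma)$. The only difference is organizational---you first extract from (\ref{defDB}.\ref{item:exact-triangle}) the descent isomorphism $\Om^0_{X,\Theta}\qis\myR\pi_*\Om^0_{\wt X,\wt\Sigma}$ and then compare $(X,\Theta)$ with $(X,\Sigma)$ via the octahedral axiom, whereas the paper works directly with the long exact sequence of cohomology sheaves of (\ref{defDB}.\ref{item:exact-triangle}) and the factorization $h^i(\Om^0_X)\xrightarrow{\alpha^i}h^i(\Om^0_\Gamma)\xrightarrow{\beta^i}h^i(\Om^0_\Sigma)$ to conclude that the restriction $\varrho^i\colon\myR^i\pi_*\Om^0_{\wt X}\to\myR^i\pi_*\Om^0_{\wt\Sigma}$ is an isomorphism; these are two packagings of the same diagram chase.
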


\begin{proof}
  Let $\Gamma = \Sigma \cup \Upsilon$ and consider the exact triangle
  (\ref{defDB}.\ref{item:exact-triangle}),
  \begin{equation}
    \label{eq:2}
    \xymatrix{%
      \ul{\Omega}_X^0 \ar[r] & \ul{\Omega}_\Gamma^0 \oplus
      \myR\pi_*\ul{\Omega}_{\wt X}^0 \ar[r] & \myR\pi_*\ul{\Omega}_{\wt \Sigma}^0 
      \ar^-{+1}[r] & },
  \end{equation}
  which induces the long exact sequence of sheaves:
  \begin{equation*}
    \xymatrix{%
      \ar[r] & h^i(\ul{\Omega}_X^0)\ar[r]^-{(\alpha^i,\sigma^i)} &
      h^i(\ul{\Omega}_\Gamma^0)\oplus 
      \myR^i\pi_*\ul{\Omega}_{\wt X}^0  \ar[r]
      & 
      \myR^i\pi_*\ul{\Omega}_{\wt \Sigma}^0 \ar[r] &  
      h^{i+1}(\ul{\Omega}_X^0)\ar[r] & 
    }
  \end{equation*}

  By \eqref{lem:db-defect-rel} the natural morphism $\gamma^i:
  h^i(\ul{\Omega}_X^0)\to h^i(\ul{\Omega}_{\Sigma}^0)$ is an isomorphism for $i>0$.
  By \eqref{prop:DB-defect-of-a-union} and the assumption
  we obtain that $h^i(\Om_{\Gamma,\Sigma}^\circ)=0$ for $i\geq s>0$ and hence the
  natural morphism $\beta^i: h^i(\ul{\Omega}_\Gamma^0)\to
  h^i(\ul{\Omega}_{\Sigma}^0)$ is an isomorphism 
  for $i\geq s$.
  Using 
  the fact that $\gamma^i=\beta^i\circ\alpha^i$ we obtain that the morphism
  $\alpha^i: h^i(\ul{\Omega}_X^0) \to h^i(\ul{\Omega}_\Gamma^0)$ is an isomorphism
  for $i\geq s>0$ and hence the natural restriction map
  $$
  \varrho^i: \myR^i\pi_*\Om^0_{\wt X} \to \myR^i\pi_*\Om^0_{\wt \Sigma}
  $$
  is an isomorphism for $i\geq s$. This in turn implies that $\myR^i\pi_* \Om_{\wt
    X,\wt\Sigma}^0=0$ for $i\geq s$ as desired.
\end{proof}

\noindent
As a corollary, a slight generalization of \cite[13.4]{GKKP10} follows.

\begin{cor}\label{cor:vanishing-1}
  Let $(X, \Sigma)$ be a DB pair and $\pi : \widetilde X \to X$ a log resolution of
  $(X,\Sigma)$ with $E := \exc(\pi)$. Let $\wt\Sigma = E \cup \pi^{-1}\Sigma$ and
  $\Upsilon := \ol{\pi(E) \setminus \Sigma}$, both considered with their induced
  reduced subscheme structure.  Then
  $$ 
  \myR^{i}\pi_* \sI_{\wt\Sigma\subseteq \wt X} = 0 
  \text{\qquad for all \, $i \geq \max \bigl(\dim \Upsilon,1 \bigr)$.}
  $$
  In particular, if $X$ is normal of dimension $n\geq 2$, then $\myR^{n-1}\pi_*
  \sI_{\wt\Sigma\subseteq \wt X} = 0$.
\end{cor}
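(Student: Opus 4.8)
The plan is to deduce the corollary directly from Theorem~\ref{thm:vanishing}, the only real work being to identify the correct value of $s$ and to translate the conclusion from $\Om^0_{\wt X,\wt\Sigma}$ back to the ideal sheaf $\sI_{\wt\Sigma\subseteq\wt X}$.

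First I would record that $(\wt X,\wt\Sigma)$ is a DB pair. Because $\pi$ is a \emph{log} resolution of $(X,\Sigma)$, the total space $\wt X$ is smooth and $\wt\Sigma=E\cup\pi^{-1}\Sigma$ is an snc divisor, so $(\wt X,\wt\Sigma)$ is an snc pair and hence a DB pair by Corollary~\ref{cor:snc-is-db}. By Definition~\ref{def:DB-defect} this says precisely that the natural map $\sI_{\wt\Sigma\subseteq\wt X}\to\Om^0_{\wt X,\wt\Sigma}$ is a quasi-isomorphism; applying $\myR\pi_*$ and passing to cohomology sheaves gives $\myR^i\pi_*\sI_{\wt\Sigma\subseteq\wt X}\simeq\myR^i\pi_*\Om^0_{\wt X,\wt\Sigma}$ for every $i$. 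Thus it suffices to prove the vanishing for the right-hand side.

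Next I would verify the hypothesis of Theorem~\ref{thm:vanishing} with $s=\max(\dim\Upsilon,1)$. When $\dim\Upsilon\geq 1$, Proposition~\ref{prop:top-coh-vanishes-rel} applied to the variety $\Upsilon$ yields $h^i(\Om^0_{\Upsilon,\Upsilon\cap\Sigma})=0$ for all $i\geq\dim\Upsilon=s$, and $s>0$ as the theorem requires. The remaining cases are degenerate and I would dispatch them by hand: if $\dim\Upsilon=0$ then $\Upsilon$ is a finite reduced set of points, hence smooth and DB, so the relevant cohomology sheaves vanish for $i\geq 1=s$; and if $\Upsilon=\emptyset$ the complex $\Om^0_{\Upsilon,\Upsilon\cap\Sigma}$ is zero and the condition holds trivially with $s=1$. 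In every case the hypothesis of Theorem~\ref{thm:vanishing} is satisfied with $s=\max(\dim\Upsilon,1)$.

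Applying Theorem~\ref{thm:vanishing} then gives $\myR^i\pi_*\Om^0_{\wt X,\wt\Sigma}=0$ for $i\geq\max(\dim\Upsilon,1)$, which by the first step is exactly the asserted vanishing of $\myR^i\pi_*\sI_{\wt\Sigma\subseteq\wt X}$. For the final clause, if $X$ is normal of dimension $n\geq 2$ then $\pi$ is an isomorphism over a dense open set, so $\pi(E)$ is a proper closed subset and $\dim\Upsilon\leq\dim\pi(E)\leq n-1$; combined with $n-1\geq 1$ this gives $n-1\geq\max(\dim\Upsilon,1)$ and hence $\myR^{n-1}\pi_*\sI_{\wt\Sigma\subseteq\wt X}=0$. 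I expect the bulk of the argument to be formal; the only point needing care is the degenerate range $\dim\Upsilon\in\{-\infty,0\}$, where Proposition~\ref{prop:top-coh-vanishes-rel} does not apply literally and the choice $s=1$ must be justified separately.
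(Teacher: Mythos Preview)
Your proposal is correct and follows essentially the same approach as the paper: set $s=\max(\dim\Upsilon,1)$, invoke Proposition~\ref{prop:top-coh-vanishes-rel} to verify the hypothesis of Theorem~\ref{thm:vanishing}, and use that $(\wt X,\wt\Sigma)$ is snc (hence DB) to identify $\Om^0_{\wt X,\wt\Sigma}$ with $\sI_{\wt\Sigma\subseteq\wt X}$. You are in fact more careful than the paper in separating out the degenerate cases $\dim\Upsilon\le 0$, where Proposition~\ref{prop:top-coh-vanishes-rel} does not literally apply.
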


\begin{proof}
  Let $s= \max \bigl(\dim \Upsilon, 1 \bigr)$. Then
  $h^i(\Om_{\Upsilon,\Upsilon\cap\Sigma}^\circ)=0$ for $i\geq s$ by
  \eqref{prop:top-coh-vanishes-rel}.  As the pair $(\wt X,\wt \Sigma)$ is snc, it is
  also DB and hence $\Om^0_{\wt X,\wt \Sigma}\qis \sI_{\wt\Sigma\subseteq \wt X}$.
  Therefore the statement follows from \eqref{thm:vanishing}.
\end{proof}

\noindent
We have a stronger result for log canonical pairs \ujjj{and for that we need the
  following definition:

\begin{defn}\label{def:szabo-res}
  A log resolution of a dlt pair $(Z,\Theta)$, $g:(Y, \Gamma)\to (Z,\Theta)$ is
  called a \emph{Szab\'o-resolution}, if there exist $A,B$ effective $\bQ$-divisors
  on $Y$ without common irreducible components, such that $\supp (A+B)\subset \exc
  (g)$, $\rdown{A}=0$, and
  $$
  K_{Y}+\Gamma \sim_{\bQ} g^*(K_Z+\Theta) - A + B.
  $$
\end{defn}

\begin{rem}
  Every dlt pair admits a Szab\'o-resolution by \cite{MR1322695} (cf.\ \cite[2.44]{KM98}).
\end{rem}
}

\begin{cor}\label{cor:top-vanishing}
  Let $(X, \pairD)$ be a \uj{$\bQ$-factorial} log canonical pair, $\pi : \widetilde X
  \to X$ a log resolution of $(X, \pairD)$,
  \ujjj{and} 
  let $\wt \pairD=\bigl(\pi^{-1}_*\rdown \pairD + \excnklt(\pi)\bigr)_{\red}$.  Then
  \begin{equation*}
    \myR^{i}\pi_* \, \sO_{\widetilde X}(- \widetilde \pairD) = 0\quad\text{for $i>0$.}\\
  \end{equation*}
\end{cor}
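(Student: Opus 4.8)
The plan is to reduce the statement to the dual--form vanishing for DB pairs, \eqref{thm:vanishing}, by passing to a dlt model on which the non-klt part of the boundary becomes an honest reduced divisor, thereby separating it from the klt exceptional divisors. Throughout set $\Sigma:=\rdown{\pairD}$; since $(X,\pairD)$ is log canonical, $\Sigma$ is a reduced integral divisor with $\supp\Sigma\subseteq\supp\rdown{\pairD}$, so $(X,\Sigma)$ is a DB pair by \eqref{cor:lc-is-DB}.

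First I would choose a $\bQ$-factorial dlt model $h\colon (X^{\rm m},\pairD^{\rm m})\to (X,\pairD)$ extracting all non-klt places of $(X,\pairD)$ (which exists by the MMP, \cite{KK10}); because $(X,\pairD)$ is lc this is crepant, $K_{X^{\rm m}}+\pairD^{\rm m}\sim_{\bQ} h^*(K_X+\pairD)$, every $h$-exceptional divisor has discrepancy $-1$, and every non-klt place of $(X,\pairD)$ is a divisor on $X^{\rm m}$. Set $\Lambda:=\rdown{\pairD^{\rm m}}=h^{-1}_*\Sigma+\exc(h)$, a reduced divisor; then $(X^{\rm m},\Lambda)$ is both a DB pair and a rational pair by \eqref{cor:dlt-is-DB} and \cite[111]{KollarKovacsRP}. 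After possibly blowing up further I may assume the given log resolution factors as $\pi=h\circ g$ with $g\colon\widetilde X\to X^{\rm m}$; since $X^{\rm m}$ already carries all non-klt places, every $g$-exceptional divisor is klt over $(X^{\rm m},\pairD^{\rm m})$, whence $\widetilde\pairD=g^{-1}_*\Lambda$ with no exceptional components added (a Szab\'o-resolution \eqref{def:szabo-res} of $(X^{\rm m},\pairD^{\rm m})$ furnishes such a $g$ with explicit control of the discrepancies). That the left--hand side is independent of the chosen log resolution — so that it suffices to treat this particular $\pi$ — follows by comparing any two log resolutions over a common dominating one and noting that the discrepancy $-1$ divisors created upon refining are exactly the new non-klt places of $(X,\pairD)$, hence are already accounted for in $\widetilde\pairD$.

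The heart of the argument is the $h$--step: I apply \eqref{thm:vanishing} to the DB pair $(X,\Sigma)$ and the proper birational morphism $h\colon X^{\rm m}\to X$. Here $\exc(h)$ is the set of extracted non-klt places, so the divisor produced by the theorem is $\exc(h)\cup h^{-1}\Sigma=h^{-1}_*\Sigma+\exc(h)=\Lambda$, while the base locus is $\Upsilon=\ol{h(\exc(h))\setminus\Sigma}$, the union of the non-klt centers of $(X,\pairD)$ not contained in $\Sigma$. Both $\Upsilon$ and $\Upsilon\cap\Sigma$ are unions of log canonical centers of $(X,\pairD)$, hence DB by \cite{KK10}, so by \eqref{prop:pair-of-DBs-is-DB} the pair $(\Upsilon,\Upsilon\cap\Sigma)$ is a DB pair and $\Om^0_{\Upsilon,\Upsilon\cap\Sigma}\qis\sI_{\Upsilon\cap\Sigma\subseteq\Upsilon}$ is a sheaf; in particular $h^i(\Om^0_{\Upsilon,\Upsilon\cap\Sigma})=0$ for all $i\geq 1$. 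Thus the hypothesis of \eqref{thm:vanishing} holds with $s=1$, giving $\myR^i h_*\Om^0_{X^{\rm m},\Lambda}=0$ for all $i\geq 1$. Since $(X^{\rm m},\Lambda)$ is a DB pair, $\Om^0_{X^{\rm m},\Lambda}\qis\sI_{\Lambda\subseteq X^{\rm m}}$, and because $X^{\rm m}$ is normal and $\Lambda$ is reduced one has $\sI_{\Lambda\subseteq X^{\rm m}}=\sO_{X^{\rm m}}(-\Lambda)$; hence $\myR^i h_*\sO_{X^{\rm m}}(-\Lambda)=0$ for $i>0$. This upgrade from the range $i\geq\dim\Upsilon$ of \eqref{cor:vanishing-1} to all $i>0$ is precisely what the DB-ness of $\Upsilon$ buys.

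Finally I would assemble the two steps via the Leray spectral sequence for $\pi=h\circ g$. Because $(X^{\rm m},\Lambda)$ is a rational pair and $\widetilde\pairD=g^{-1}_*\Lambda$, the defining vanishing \eqref{def:rat-sing-pairs} yields $g_*\sO_{\widetilde X}(-\widetilde\pairD)=\sO_{X^{\rm m}}(-\Lambda)$ and $\myR^i g_*\sO_{\widetilde X}(-\widetilde\pairD)=0$ for $i>0$, so $\myR g_*\sO_{\widetilde X}(-\widetilde\pairD)\qis\sO_{X^{\rm m}}(-\Lambda)$. Consequently $\myR\pi_*\sO_{\widetilde X}(-\widetilde\pairD)\qis\myR h_*\sO_{X^{\rm m}}(-\Lambda)$, and taking cohomology sheaves gives $\myR^i\pi_*\sO_{\widetilde X}(-\widetilde\pairD)=\myR^i h_*\sO_{X^{\rm m}}(-\Lambda)=0$ for $i>0$, as required. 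I expect the main obstacle to be orchestrating the dlt model so that all three requirements hold simultaneously — that the divisor produced on $X^{\rm m}$ is exactly $\Lambda$, that $\Upsilon$ is a union of lc centers (the input forcing $s=1$), and that $g$ contributes nothing — together with the standard but non-formal verification that the left--hand side does not depend on the chosen log resolution.
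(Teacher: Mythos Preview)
Your strategy matches the paper's: pass to a minimal dlt model $\mu:(X^{\rm m},\Delta^{\rm m})\to(X,\Delta)$, apply the DB-pair vanishing \eqref{thm:vanishing} to the $\mu$-step, use that dlt pairs are rational pairs \cite[111]{KollarKovacsRP} for the resolution-to-$X^{\rm m}$ step, and invoke independence of the log resolution to conclude. Two points, however, are handled less cleanly than in the paper. First, the paper takes $\Sigma=\rdown\Delta\cup\mu(\exc(\mu))$ rather than your $\Sigma=\rdown\Delta$; this forces $\Upsilon=\emptyset$, so the hypothesis of \eqref{thm:vanishing} is vacuous and one never has to analyse $(\Upsilon,\Upsilon\cap\Sigma)$. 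Your assertion that $\Upsilon\cap\Sigma$ is a union of lc centers is not justified, and while the conclusion that $(\Upsilon,\Upsilon\cap\Sigma)$ is a DB pair is true, the clean argument is via \eqref{prop:DB-defect-of-a-union}: both $\Gamma=\Sigma\cup\Upsilon$ and $\Sigma$ are unions of lc centers, hence DB by \cite{KK10}, so $\Om^\times_{\Upsilon,\Upsilon\cap\Sigma}\qis\Om^\times_{\Gamma,\Sigma}\qis 0$. Second, your independence-of-resolution step (``comparing any two log resolutions over a common dominating one'') is too vague; a $g$-exceptional divisor over $X^{\rm m}$ can still be a non-klt place of $(X,\Delta)$ (any blow-up of a stratum of $\rdown{\Delta^{\rm m}}$ gives one), so $\wt\Delta$ need not equal $g^{-1}_*\Lambda$ for an arbitrary $g$. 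The paper fixes this by proving a separate sublemma via weak factorization \cite{MR1896232}, checking directly that for an snc pair the statement is stable under blow-up along a smooth center (Kawamata--Viehweg if the center is non-klt, the Szab\'o argument otherwise), and then working only with a Szab\'o-resolution of $X^{\rm m}$, for which indeed no exceptional divisor is a non-klt place.
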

\begin{proof}
  \ujjj{%
    First note that the statement is true if $(X,\Delta)$ is an snc pair and $\pi$ is
    the blow up of $X$ along a smooth center. Indeed, if the center is a non-klt
    center, then the statement is a direct consequence of the Kawamata-Viehweg
    vanishing theorem and if the center is not a non-klt center, then this is a
    Szab\'o-resolution and the statement follows as in the proof of
    \cite[111]{KollarKovacsRP}. This implies the following:

    \begin{sublem}\label{eq:15}
      Let $\pi_i:(X_i,\pairD_i)\to (X,\pairD)$ for $i=1,2$ be two log resolutions of
      $(X,\pairD)$ and let $\wt \pairD_i=\bigl((\pi_i^{-1})_*\rdown \pairD +
      \excnklt(\pi_i)\bigr)_{\red}\subset X_i$. Then
      \begin{equation*}
        \myR(\pi_1)_* \, \sO_{X_1}(-\widetilde \pairD_1) \simeq         
        \myR(\pi_2)_* \, \sO_{X_2}(-\widetilde \pairD_2)  
      \end{equation*}
    \end{sublem}
    \begin{proof}
      By \cite[Theorem 0.3.1(6)]{MR1896232} (cf.\ \cite[Theorem 3.8]{MR2180406}) the
      induced birational map between $X_1$ and $X_2$ can be written as a sequence of
      blowing ups and blowing downs along smooth centers.  Then the statement follows
      from the above observation and the definition of the $\wt \pairD$'s.
    \end{proof}

  }

  \ujjj{Now we turn to proving the general case.}
  Consider a minimal dlt model $\mu: (X^{\rm m},\pairD^{\rm m})\to (X, \pairD)$
  \cite[3.1]{KK10}. Let $\Sigma\leteq \rdown\pairD\cup \mu(\exc(\mu))$ considered
  with the induced reduced subscheme structure. From the definition of a minimal dlt
  model it follows that $\Sigma$ is a union of non-klt centers of $(X,\pairD)$. Then
  by \cite[Theorem~1.4]{KK10} both $X$ and $\Sigma$ are DB, and hence $(X,\Sigma)$ is
  a DB pair by \eqref{prop:pair-of-DBs-is-DB}.

  Since $(X^{\rm m},\pairD^{\rm m})$ is dlt, $(X^{\rm m},\rdown{\pairD^{\rm m}})$ is
  a DB pair by \eqref{cor:lc-is-DB}.
  Therefore,
  $$
  \Om_{X^{\rm m},\rdown{\pairD^{\rm m}}}^0\qis \sO_{X^{\rm m}}(-\rdown{\pairD^{\rm m}}).
  $$
  By the definition of a minimal dlt model $\rdown{\pairD^{\rm
      m}}=\bigl(\pi^{-1}\Sigma\bigr)_{\red}\supseteq \exc(\mu)$ and then it follows
  from \eqref{thm:vanishing} that $\myR^i\mu_*\sO_{X^{\rm m}}(-\rdown{\pairD^{\rm
      m}})=0$ for $i>0$ and hence
  \begin{equation}
    \label{eq:10}
    \myR\mu_*\sO_{X^{\rm m}}(-\rdown{\pairD^{\rm m}}) \qis \sO_{X}(-\rdown\pairD). 
  \end{equation}
  \ujj{%
    Next let $\sigma : \what X\to X$ be a log resolution of $(X,\pairD)$ that factors
    through both $\pi$ and $\mu$. Then one has the following commutative diagram:
    $$
    \xymatrix{%
      \what X \ar[r]^\tau \ar@{-->}[d]_\lambda \ar[rd]^\sigma &
      \text{\vphantom{$\what X$}}{X^{\rm m}\hskip-1.2ex} \ar[d]^\mu \\
      \wt X \ar[r]_\pi & \text{\vphantom{$\wt X$}}X
    }
    $$
    Let $\what \Delta\subseteq \what X$ denote the strict transform of $\wt \Delta$
    on $\what X$. It follows from the definition of a minimal dlt model that $\what
    \pairD$ is also the strict transform of $\rdown{\pairD^{\rm m}}$ from $X^{\rm
      m}$.
    
    As both $(X^{\rm m}, \pairD^{\rm m})$ and $(\wt X,\wt \pairD)$ are dlt, $(X^{\rm
      m},\rdown{\pairD^{\rm m}})$ and $(\wt X,\wt \pairD)$ are also rational by
    \cite[111]{KollarKovacsRP}, 
    so we have that
    \begin{align*}
      \myR\tau_*\sO_{\what X}(-\what\pairD) &\qis \sO_{X^{\rm m}}(-\rdown{\pairD^{\rm
          m}}),     \text{  and}\\ 
      \myR\lambda_*\sO_{\what X}(-\what\pairD) &\qis \sO_{\wt X}(-\wt\pairD)
    \end{align*}
    Therefore, by (\ref{eq:10}),
    \begin{multline*}
      \myR\pi_*\sO_{\wt X}(-\wt\pairD) \qis \myR\pi_*\myR\lambda_*\sO_{\what
        X}(-\what\pairD) \qis \myR\sigma_*\sO_{\what X}(-\what\pairD) \qis \\
      \myR\mu_*\myR\tau_*\sO_{\what X}(-\what\pairD) \qis \myR\mu_*\sO_{X^{\rm
          m}}(-\rdown{\pairD^{\rm m}}) \qis \sO_{X}(-\rdown\pairD).
    \end{multline*}
  }%
  \ujjj{%
    Next let $\tau : \what X\to X^{\rm m}$ be the Szab\'o-resolution of $(X^{\rm
      m},\pairD^{\rm m})$, $\sigma=\mu\circ\tau$, $\what \pairD=\tau^{-1}_*\rdown
    {\pairD^{\rm m}}=\bigl(\sigma^{-1}_*\rdown {\pairD} +
    \excnklt(\sigma)\bigr)_{\red} $, and $\lambda=\pi^{-1}\circ\sigma$.
    Then by \cite[111]{KollarKovacsRP}, 
    we have that
    $$
    \myR\tau_*\sO_{\what X}(-\what\pairD) \qis \sO_{X^{\rm m}}(-\rdown{\pairD^{\rm
        m}}),
    $$
    and hence, by (\ref{eq:10}),
    \begin{multline*}
      \myR\sigma_*\sO_{\what X}(-\what\pairD) \qis \myR\mu_*\myR\tau_*\sO_{\what
        X}(-\what\pairD) \qis\\ \myR\mu_*\sO_{X^{\rm m}}(-\rdown{\pairD^{\rm m}})
      \qis \sO_{X}(-\rdown\pairD).
    \end{multline*}
    The proof is finished by applying (\ref{eq:15}) to $\what X$ and $\wt X$.  }
\end{proof}

\noindent
Finally, observe that \ujjj{\eqref{cor:top-vanishing}} implies that log canonical
singularities are not too far from being rational:

\begin{cor}
  Let $X$ be a variety with log canonical singularities and $\pi:\wt X\to X$ a
  resolution of $X$ with $E_{\lc}\leteq \excnklt(\pi)$. Then 
  $$
  \myR^{i}\pi_* \, \sO_{\widetilde X}(-  E_{\lc}) = 0\quad\text{for $i>0$.}
  $$
\end{cor}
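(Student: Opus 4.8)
The plan is to deduce the statement from \eqref{cor:top-vanishing} by specializing to $\pairD=0$, the only gap being that \eqref{cor:top-vanishing} assumes $X$ to be $\bQ$-factorial whereas here $X$ is merely log canonical. First I would record two harmless reductions. Since $E_{\lc}=\excnklt(\pi)$ is determined by valuations and the whole question is local and insensitive to the model, I would argue, exactly as in \eqref{eq:15} (whose proof only invokes weak factorization together with the snc blow-up computation and never uses $\bQ$-factoriality of the base), that it suffices to verify the vanishing for a single conveniently chosen log resolution; this also lets me replace the given resolution by a log resolution. With $\pairD=0$ one has $\rdown\pairD=0$, so the divisor produced by \eqref{cor:top-vanishing} is $\bigl(\excnklt(\pi)\bigr)_{\red}=E_{\lc}$, and hence the $\bQ$-factorial case is literally \eqref{cor:top-vanishing}.

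To remove $\bQ$-factoriality I would pass to a $\bQ$-factorial minimal dlt model $\mu:(X^{\rm m},\pairD^{\rm m})\to(X,0)$ as in \cite[3.1]{KK10}. Here $\pairD^{\rm m}$ is the reduced $\mu$-exceptional divisor, every $\mu$-exceptional divisor has discrepancy $-1$, and $K_{X^{\rm m}}+\pairD^{\rm m}\sim_{\bQ}\mu^{*}K_X$; in particular $X^{\rm m}$ is $\bQ$-factorial and $(X^{\rm m},\pairD^{\rm m})$ is dlt, hence lc. I would then choose the single log resolution to factor through $X^{\rm m}$, taking $\tau:\widetilde X\to X^{\rm m}$ to be a Szab\'o-resolution of $(X^{\rm m},\pairD^{\rm m})$ in the sense of \eqref{def:szabo-res} and setting $\pi=\mu\circ\tau$.

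The role of the Szab\'o-resolution is the crux of the argument. Because crepancy identifies the non-klt places of $(X,0)$ with those of $(X^{\rm m},\pairD^{\rm m})$, the divisor $E_{\lc}=\excnklt(\pi)$ splits on $\widetilde X$ as the strict transform $\tau^{-1}_{*}\pairD^{\rm m}$ plus the $\tau$-exceptional non-klt places $\excnklt(\tau)$; the defining property of a Szab\'o-resolution is precisely that all of its exceptional discrepancies exceed $-1$, so $\excnklt(\tau)=\emptyset$ and $E_{\lc}=\tau^{-1}_{*}\pairD^{\rm m}$. Since a dlt pair is a rational pair (\cite[111]{KollarKovacsRP}), this gives $\myR\tau_{*}\sO_{\widetilde X}(-E_{\lc})\qis\sO_{X^{\rm m}}(-\pairD^{\rm m})$, concentrated in degree zero.

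It then remains to descend along $\mu$. Setting $\Sigma:=\bigl(\mu(\exc(\mu))\bigr)_{\red}$, a union of non-klt centers of $(X,0)$, \cite[Theorem~1.4]{KK10} shows that $X$ and $\Sigma$ are DB, so $(X,\Sigma)$ is a DB pair by \eqref{prop:pair-of-DBs-is-DB}. I would apply \eqref{thm:vanishing} to this pair and the morphism $\mu$: one has $\Upsilon=\overline{\mu(\exc(\mu))\setminus\Sigma}=\emptyset$ (so one may take $s=1$), the divisor $\exc(\mu)\cup\mu^{-1}\Sigma$ of \eqref{thm:vanishing} reduces to $\pairD^{\rm m}$, and $\Om^{0}_{X^{\rm m},\pairD^{\rm m}}\qis\sO_{X^{\rm m}}(-\pairD^{\rm m})$ because the dlt pair $(X^{\rm m},\pairD^{\rm m})$ is DB by \eqref{cor:lc-is-DB}; hence $\myR^{i}\mu_{*}\sO_{X^{\rm m}}(-\pairD^{\rm m})=0$ for $i>0$. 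Combining the two steps through the composition isomorphism $\myR\pi_{*}=\myR\mu_{*}\myR\tau_{*}$ yields $\myR\pi_{*}\sO_{\widetilde X}(-E_{\lc})\qis\myR\mu_{*}\sO_{X^{\rm m}}(-\pairD^{\rm m})$, whose higher direct images vanish, which is the assertion. The step I expect to be the main obstacle is this reduction to the $\bQ$-factorial case: one must check that the dlt-model and Szab\'o-resolution bookkeeping makes $E_{\lc}$ correspond to $\tau^{-1}_{*}\pairD^{\rm m}$ (so that the rational-pair vanishing applies) while simultaneously making $\pairD^{\rm m}$ the full non-klt reduced preimage of $\Sigma$ (so that \eqref{thm:vanishing} applies to $\mu$), since it is the matching of these two descriptions of the same divisor that removes the $\bQ$-factoriality hypothesis.
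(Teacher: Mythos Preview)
Your argument is correct and is essentially the paper's own approach. The paper gives no separate proof for this corollary; it simply records it as the special case $\pairD=0$ of \eqref{cor:top-vanishing}, so that $\wt\pairD=\bigl(\excnklt(\pi)\bigr)_{\red}=E_{\lc}$. You have correctly noticed that \eqref{cor:top-vanishing} carries a $\bQ$-factoriality hypothesis that is absent here, and to close that gap you rerun, in the special case $\pairD=0$, exactly the proof of \eqref{cor:top-vanishing}: pass to a minimal dlt model via \cite[3.1]{KK10}, use \eqref{cor:lc-is-DB} and \eqref{thm:vanishing} for the descent along $\mu$ (with $\Upsilon=\emptyset$, so $s=1$), take a \szabores $\tau$ and invoke \cite[111]{KollarKovacsRP} for the descent along $\tau$, and finish with \eqref{eq:15}. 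This is verbatim the structure of the proof of \eqref{cor:top-vanishing}, and indeed none of those steps uses $\bQ$-factoriality of the base $X$ (only of $X^{\rm m}$, which is automatic for a dlt model). So what you have really shown is that the $\bQ$-factoriality hypothesis in \eqref{cor:top-vanishing} is not used in its proof, and the paper's one-line deduction is justified.
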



\def\cprime{$'$} \def\polhk#1{\setbox0=\hbox{#1}{\ooalign{\hidewidth
  \lower1.5ex\hbox{`}\hidewidth\crcr\unhbox0}}} \def\cprime{$'$}
  \def\cprime{$'$} \def\cprime{$'$} \def\cprime{$'$}
  \def\polhk#1{\setbox0=\hbox{#1}{\ooalign{\hidewidth
  \lower1.5ex\hbox{`}\hidewidth\crcr\unhbox0}}} \def\cdprime{$''$}
  \def\cprime{$'$} \def\cprime{$'$} \def\cprime{$'$} \def\cprime{$'$}
\providecommand{\bysame}{\leavevmode\hbox to3em{\hrulefill}\thinspace}
\providecommand{\MR}{\relax\ifhmode\unskip\space\fi MR}
\providecommand{\MRhref}[2]{%
  \href{http://www.ams.org/mathscinet-getitem?mr=#1}{#2}
}
\providecommand{\href}[2]{#2}

\end{document}